\newtheorem{Theorem}{Theorem}[section]
\newtheorem{Proposition}[Theorem]{Proposition}
\newtheorem{Corollary}[Theorem]{Corollary}
\newtheorem{Lemma}[Theorem]{Lemma}
\newtheorem{Fact}[Theorem]{Fact}
\newtheorem{theorem}{Theorem}[section]
\newtheorem{prop}[Theorem]{Proposition}
\newtheorem{lemma}[Theorem]{Lemma}
\newtheorem{fact}[Theorem]{Fact}
\newtheorem*{Claim}{Claim}
\newtheorem*{Claim1}{Claim 1}
\newtheorem*{Claim2}{Claim 2}
\theoremstyle{definition}
\newtheorem{Remark}[Theorem]{Remark}
\newtheorem{remark}[Theorem]{Remark}
\newtheorem{Definition}[Theorem]{Definition}
\newtheorem{defn}[Theorem]{Definition}
\newtheorem{nota}[Theorem]{Notation}
\newtheorem{Remark/defn}[Theorem]{\bf{Remark \& Definition}}
\newsavebox{\indbin}
\savebox{\indbin}{\begin{picture}(0,0)
\newlength{\gnu}
\settowidth{\gnu}{$\smile$} \setlength{\unitlength}{.5\gnu}
\put(-1,-.65){$\smile$} \put(-.25,.1){$|$}
\end{picture}}
\newcommand{\bs}{\boldsymbol}
\newcommand{\be}{\begin{enumerate}}
\newcommand{\bd}{\begin{defn}}
\newcommand{\bt}{\begin{theorem}}
\newcommand{\bl}{\begin{lemma}}
\newcommand{\ee}{\end{enumerate}}
\newcommand{\ed}{\end{defn}}
\newcommand{\et}{\end{theorem}}
\newcommand{\el}{\end{lemma}}
\newcommand{\la}{\langle}
\newcommand{\ra}{\rangle}
\newcommand{\ov}{\overline}
\newcommand{\CL}{{\mathcal L}}
\newcommand{\CM}{{\mathcal M}}
\newcommand{\Aut}{\operatorname{Aut}}
\newcommand{\aut}{\operatorname{Aut}}
\newcommand{\autf}{\operatorname{Autf}}
\newcommand{\p}{\ov{p}}
\newcommand{\wcf}{\operatorname{wcf}}
\def\aute{\operatorname{Aut}_{\bs{e}}}
\def\autfe{\operatorname{Autf}_{\bs{e}}}
\def\autfse{\operatorname{Autf_{\SS}}{(\mathcal{M},\bs{e})}}
\def\Autfse{\operatorname{Autf_{\SS}}{(\mathcal{M},\bs{e})}}
\def\autfKPe{\operatorname{Autf_{KP}}{(\mathcal{M},\bs{e})}}
\def\autfkpe{\operatorname{Autf_{KP}}{(\mathcal{M},\bs{e})}}
\def\LL{\operatorname{L}}
\def\ss{\operatorname{s}}
\def\SS{\operatorname{S}}
\def\KP{\operatorname{KP}}
\def\gal{\operatorname{Gal}}
\def\gall{\operatorname{Gal}_{\LL}}
\newcommand{\id}{\operatorname{id}}
\def\eq{\operatorname{eq}}
\def\dcl{\operatorname{dcl}}
\def\bdd{\operatorname{bdd}}
\def\acl{\operatorname{acl}}
\def\acleq{\operatorname{acl^{eq}}}
\def\tp{\operatorname{tp}}
\def\Ltp{\operatorname{Ltp}}
\def\intr{\operatorname{int}}
\def\Th{\mbox{Th}}
\begin{document}

\title{Automorphism groups over a hyperimaginary}

\author{{Byunghan} \textsc{Kim}}
\address{Department of Mathematics\\
Yonsei University\\ Seoul, Korea}
\email{bkim@yonsei.ac.kr}

\author{Hyoyoon \textsc{Lee}}
\address{Department of Mathematics\\
Yonsei University\\ Seoul, Korea}
\email{alternative@yonsei.ac.kr}

\date{\today}

\subjclass[2020]{Primary 03C60; Secondary 54H11}

\keywords{Lascar group, hyperimaginary, strong types, G-compactness}

\thanks{The authors were supported by  NRF of Korea grants 2018R1D1A1A02085584 and 2021R1A2C1009639}



 \begin{abstract}
 In this paper we study the Lascar group over a hyperimaginary $\bs e$. We  verify that  various results about the group over a real set still hold when  the set  is replaced by $\bs e$.
  First of all, there
is no written proof in the available literature that the group over $\bs e$ is a topological group. We present an expository style proof of the fact, which even simplifies existing proofs for the  real case. We further extend a result that the orbit equivalence relation under a closed subgroup of the Lascar group is type-definable. On the one hand, we correct errors appeared in \cite[5.1.14-15]{K} and produce a counterexample. 
  On the other, we extend  Newelski's Theorem in \cite{N} that `a  G-compact theory over a set has a uniform bound for the Lascar distances' to the hyperimaginary context.    
 Lastly, we supply a partial positive answer to a question raised in \cite[2.11]{DKKL}, which is even a new result in the real context.
 \end{abstract}


\maketitle

The Lascar (automorphism) group of a first-order complete theory and its quotient groups such as the Kim-Pillay group and the Shelah group have been  central themes in  contemporary model theory.  The study on those groups enables us to develop Galois theoretic correspondence between the groups and their orbit-equivalence relations on a monster model such as Lascar types, Kim-Pillay types, and Shelah strong types.  The notions of the  Lascar group and its topology are introduced first by D. Lascar in  \cite{L} using ultraproducts. Later more favorable  equivalent definition is suggested in \cite{K0} and \cite{LP}, which is nowadays considered as a standard approach.  However  even a complete proof using the approach of the fundamental fact that the Lascar group is a topological group is not  so well available. 
For example in \cite{CLPZ}, its proof is left to the readers, while the proof is not at all trivial.  As far as we can see, only in \cite{Z}, a detailed proof is written.  

Aforementioned results are for the Lascar group over $\emptyset$, or more generally over a real set $A$. 
In this paper we study the Lascar group over a hyperimaginary $\bs e$ and verify how  results on the Lascar group over $A$ can be extended to the case when the set is replaced by $\bs e$.   Indeed this attempt was made in \cite{K1} (and rewritten in  \cite[\S 5.1]{K}). However those contain some errors, and moreover a proof of that the Lascar group over $\bs e$ is a topological group is also missing.
 In this paper we supply a proof of the fact in a detailed expository manner. Our proof is more direct and even simplifies that  for the group over $\emptyset$ in \cite{Z}. We correct the mentioned errors in \cite{K},\cite{K1}, as well. In particular we correct the proof of that the orbit equivalence relation under a closed normal subgroup of the Lascar group over $\bs e$ is type-definable over $\bs e$. Moreover we extend  Newelski's Theorem in \cite{N} to the hyperimaginary context. Namely we show  that  if $T$ is G-compact over $\bs e$ then there is $n<\omega$ such that for any hyperimaginaries $\bs b,\bs c$, we have $\bs b\equiv^{\LL}_{\bs e}\bs c$ iff the Lascar distance between $\bs b$ and $\bs c$ is $\leq n$.  
 We also generalize the notions of {\em relativized} Lascar groups introduced in \cite{DKL}, in the context of hyperimaginaries.
 Lastly, we prove a new result even in the real context, which is a partial positive answer to  a question raised in  \cite[2.11]{DKKL}. That is, if $T$ is G-compact over $\bs e$, and $\bs c$ has only finitely many conjugates $\bs c_i$ ($i<m$) over $\bs b\bs e$ such that $\bs b \bs c_i\not\equiv^{\LL}_{\bs e} \bs b \bs c_j$ for $i<j<m$  then, for $p=\tp(\bs b/\bs e)$ and $\p=\tp(\bs b\bs c/\bs e)$, the kernel of the canonical projection $\pi_n:\gall^n(\p)\to \gall^n(p)$ of the relativized Lascar groups is finite. 

 In Section 1,  we introduce basic terminology for this paper and supply the mentioned detailed proof of that the Lascar group over $\bs e$ is a quasi-compact topological group (Corollary \ref{corlastopgp}).
 
 In Section 2,
 we prove that  the orbit equivalence relation under a closed subgroup of the Lascar group over $\bs e$ is  bounded and type-definable. In addition if the subgroup is normal then the equivalence relation is $\bs e$-invariant (Corollary \ref{maininthesection}).
 
 In Section 3,  using results in Section 2 we study particular quotient groups of the Lascar group. Namely we investigate the Kim-Pillay group and the Shelah group, and corresponding  notions of Kim-Pillay types and Shelah strong types, over a hyperimaginary.  We point out an error occurred in \cite{K1} (rewritten in \cite{K}), by  producing a counterexample. 
 
 In Section 4, using the approach in \cite{P} we prove that any type-definable Lascar  type over $\bs e$ has a finite diameter (Theorem \ref{typedeflstp}), by which we can extend Newelski's result (over $\emptyset$) to the class of  theories being G-compact over $\bs e$ (Corollary \ref{corgcptchar}).  
 
 In Section 5,  we introduce the notions of relativized Lascar groups over $\bs e$, generalizing those for real tuples over $\emptyset$ in \cite{DKL}. Then as mentioned above, we supply an answer covering the hyperimaginary case to a question in \cite{DKKL} (Theorem \ref{thmkernelfinite}).  We also point out that  results in \cite{DKKL} can be extended to our hyperimaginary context (Corollary \ref{k1finite}, Proposition \ref{propabelian}, Theorem \ref{last}).

\section{The Lascar group over a  hyperimaginary}

Throughout this paper, we fix a complete theory $T$ with a language $\CL$ and we work in  a ($\bar \kappa$-saturated) monster model $\CM$ of $T$.
We use standard terminology as in \cite{C} or \cite{K}. For example,  `bounded' or `small' sizes refer to cardinalities  $<  \bar \kappa$.
By {\em real} sets or tuples, we mean  small subsets of $\CM$ or  sequences from $\CM$ of small lengths, respectively. 
Being `type-definable' means being `type-definable over some real set.'
When $A$ is a small set, `$A$-invariant' means `invariant under any automorphism of $\CM$ fixing $A$ pointwise.' 
As is well-known, an $A$-invariant type-definable (solution) set is $A$-type-definable.
Recall that a {\em hyperimaginary}  written as $b_F=b/F$ is  an equivalence class of the real tuple $b$ of an $\emptyset$-type-definable equivalence relation $F$.
Occasionally we may write  hyperimaginaries using boldface letters such as $\bs b,\bs c$.  
The  {\em type} $\tp_x(b_F/c_L)$ of $b_F$ over another  hyperimaginary $c_L$ is  a partial type $\exists z_1z_2(\tp_{z_1z_2 }( bc) \wedge F(x,z_1) \wedge L(c,z_2))$, whose solution set is the set of  automorphic images of  $b_F$ over  $c_L$. As usual $b_F \equiv_{c_L} d_F$ denotes $\tp(b_F/c_L) = \tp(d_F/c_L)$.
We say $b_F$ and $c_L$ are  {\em interdefinable} or {\em equivalent} if,  any automorphism $f$ fixes the class $b_F$ iff $f$ fixes the class $c_L$.
Real or imaginary tuples  in $\CM^{\eq}$ are examples of hyperimaginaries, but not every hyperimaginary is equivalent with an imaginary tuple. 
Notice that  a sequence of hyperimaginaries $(d_i/F_i\mid i<\lambda)$ is interdefinable with a hyperimaginary $d/F$ where
$d=(d_i\mid i<\lambda)$ and  $F=\bigwedge_{i<\lambda} F_i(x^0_i,x^1_i)$ such that $x^k_i\cap x^l_j=\emptyset$ iff $(k,i)\ne (l,j)$,
where $k,l\in \{0,1\}$.


{\bf For the rest of this paper,  we fix  $E$, an $\emptyset$-type-definable equivalence relation, $a$ a real (possibly infinite) tuple, and $\bs{e} = a/E=a_E$ a hyperimaginary.}
For simplicity we may write  a formula in a type, say $ E(x,y)$ as $\varphi(x,y)$ (or $\varphi(a,b)$ for $\models E(a,b)$)  which indeed means $\varphi(x',y')$ for some  suitable finite subtuples $x',y'$ of
$x,y$, respectively. In this section we show  that the Lascar group  over $\bs{e}$ is a quasi-compact   topological group (Corollary \ref{corlastopgp}).
In fact this  is mentioned in \cite[5.1.3]{K} without a proof.   But we cannot find any stated proof  in the literature showing that  the proof
for a real parameter (as in \cite{Z}) can go thru for a hyperimaginary parameter. 
We confirm this,  largely by   following ideas in the real case proof. But the proof here is 
 more direct   (for example Proposition \ref{invcts}, Theorem \ref{thmmulcon}) and simplified. 



\begin{Definition}\label{strongaut}$ $
\begin{enumerate}
    \item $\aut_{\bs{e}}(\CM) = \{f \in \aut(\CM) : f(\bs{e}) = \bs{e}\}$.
    \item Let $\bs e'$ be  a hyperimaginary. We denote $\bs e' \in \operatorname{dcl}(\bs e)$ and say $\bs{e}'$ is \emph{definable over} $\bs{e}$ if $f(\bs{e}') = \bs{e}'$ for all $f \in \aut_{\bs e}(\CM)$
    \item Likewise denote $\bs e'\in \bdd(\bs e)$ and say $\bs{e}'$ is \emph{bounded over} $\bs{e}$ if 
    $$\{ f(\bs e')\mid f \in \aut_{\bs e}(\CM)\}$$
    is bounded.  
    \item $\operatorname{Autf}_{\bs{e}}(\CM) $ is the subgroup of $\aut_{\bs{e}}(\CM)$ generated by 
      $$\{f\in \aute(\CM)\mid f \in \aut_{M}(\CM) \mbox{ for some model } M \mbox{ such that } \bs{e} \in \dcl(M)\}.$$
\end{enumerate}
\end{Definition}

The following remarks will be freely used.

\begin{Remark}\label{bddequiv}$ $
\begin{enumerate}
    \item A hyperimaginary $b_F$ is called {\em countable} if $|b|$ is countable.
    Recall that any hyperimaginary is interdefinable with a sequence of  countable hyperimaginaries \cite[Lemma 4.1.3]{K}.
    Thus the \emph{definable closure} of $\bs{e}$, $\dcl(\bs{e})$ can be regarded as the set of all countable hyperimaginaries which are definable over $\bs{e}$. Then  $c_L$ is definable over $\bs e$  iff $c_L$ is  equivalent  with a sequence of countable hyperimaginaries from  $\dcl(\bs{e})$.
    Moreover the (bounded) set  $\dcl(\bs e)$  is interdefinable with a single hyperimaginary, as indicated at the end of the first paragraph of this section.
    
    \item Likewise, the \emph{bounded closure} of $\bs{e}$, $\bdd(\bs{e})$ can be regarded as
    the set of all countable hyperimaginaries which are bounded over $\bs e$. Again then  $\bdd(\bs e)$ is equivalent with 
    a single hyperimaginary bounded over  $\bs{e}$.

\item It easily follows that $\autf_{\bs{e}}(\CM)$ is a normal subgroup of $\aut_{\bs{e}}(\CM)$.
\end{enumerate}
\end{Remark}

\begin{Definition}
$\gall(\CM, \bs{e}) = \aut_{\bs{e}}(\CM) / \autfe(\CM)$, and 
$\pi: \aute(\CM)\to \gall(\CM, \bs{e})$ is the canonical projection. 
For $f\in\aute(\CM)$, $\ov f$ denotes 
$\pi(f)= f\cdot \autfe(\CM)$.
\end{Definition}


\begin{remark}\label{preli} $ $
\be\item \cite[Lemma 1.6]{LP} The following are equivalent.
\be\item $\bs{e}\in\dcl(M)$.
\item $ E(x,a)$ is finitely satisfiable in  $M$.
\item $\bs{e}\in \bdd(M)$.
\ee

\item
Recall that  hyperimaginaries $b_F,c_F$ have the same {\em Lascar (strong) type} over $\bs{e}$,  denoted by $b_F \equiv^{\LL}_{\bs{e}} c_F$ or $\Ltp(b_F/\bs{e}) = \Ltp(c_F/\bs{e})$, if there is $f \in \operatorname{Autf}_{\bs{e}}(\CM)$ such that $f(b_F) = c_F$.
 
 As is well-known the following are equivalent (so $\equiv^{\LL}_{\bs e}$ does not depend on the choice of a monster model).
\be\item $b_F\equiv^{\LL}_{\bs{e}}c_F$.

\item There are $1 \leq n < \omega$, models $M_i $ $(i<n)$ and reals $b_i$ $(i\leq  n)$ such that $\bs e\in \dcl(M_i)$, $b=b_0,\  c=b_n$,  and $b_i/F\equiv_{M_i} b_{i+1}/F$.

\item  There are $1 \leq n < \omega$, reals $b=b_0,\dots, b_n=c$ such that for each $i<n$,  $b_i/F$ and $b_{i+1}/F$ begin an $\bs e$-indiscernible sequence.

\ee 
Moreover $x_F\equiv^{\LL}_{\bs{e}}y_F$ is the finest  $\bs e$-invariant bounded equivalence relation coarser than $F$.

\item
For small models $M,N (\prec \CM)$ such that $\bs{e}\in \dcl(M), \dcl(N)$,  if  $f(M)\equiv_N g(M)$ where $f,g \in \aute(\CM)$,
 then $\ov{f}=\ov{g}$ (see for example \cite[Remark 5.1.2(1)]{K}).
\ee
\end{remark}

We now point out  the following. 

\begin{Theorem}\label{gallindmon}
$\operatorname{Gal}_{\operatorname{L}}(\CM, \bs{e})$,  up to isomorphism, does not depend on the choice of $\CM$.
\end{Theorem}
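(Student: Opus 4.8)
The plan is to identify, for each monster model $\CM$ containing $a$, the group $\gall(\CM,\bs e)$ with a fixed object built only from a space of Lascar strong types over $\bs e$, and to check that this identification carries the group operation to a fixed operation; since the resulting group manifestly makes no reference to $\CM$, all choices of monster yield isomorphic Lascar groups. Concretely, I would fix once and for all a small elementary submodel $M_0$ with $\bs e\in\dcl(M_0)$ (such $M_0$ exists: any small model containing $a$ works, by Remark \ref{preli}(1)), let $m_0$ enumerate $M_0$ so that $a$ occurs as a subtuple of $m_0$, and put $p_0:=\tp(m_0/\bs e)$, a consistent partial type over $\bs e$. Two absoluteness observations underlie everything: ``being an enumeration of an elementary submodel'' belongs to $\tp(m_0/\emptyset)$ and hence is forced by $p_0$; and in any monster the realizations of $p_0$ form a single orbit of the automorphism group of that monster fixing $\bs e$, so (since $m_0$ enumerates $M_0$ with $\bs e\in\dcl(M_0)$) every realization of $p_0$ enumerates an elementary submodel $M$ with $\bs e\in\dcl(M)$.

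The main step is a bijection. For a monster $\CM\supseteq a$ fix a realization $n\models p_0$ in $\CM$ (e.g.\ $n=m_0$ when $M_0\prec\CM$). I would show $\Phi_\CM\colon\gall(\CM,\bs e)\to X_\CM$, $\ov f\mapsto\Ltp(f(n)/\bs e)$, is a well-defined bijection onto $X_\CM:=\{\Ltp(m/\bs e):m\models p_0\ \text{in}\ \CM\}$. Well-definedness and injectivity reduce to the equivalence: for $f,g\in\aute(\CM)$, $\ov f=\ov g$ iff $\Ltp(f(n)/\bs e)=\Ltp(g(n)/\bs e)$. For the forward direction, apply the $\aut_{\bs e}$-invariance of $\equiv^{\LL}_{\bs e}$ to $g^{-1}f\in\autfe(\CM)$. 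For the converse, pick $h\in\autfe(\CM)$ with $hf(n)=g(n)$, observe that $g^{-1}hf$ fixes $M_0$ pointwise and hence lies in $\autfe(\CM)$ by Definition \ref{strongaut}(4) (an automorphism fixing pointwise a small model with $\bs e$ in its definable closure is one of the generators of $\autfe(\CM)$), and conclude $\ov f=\ov g$ using normality of $\autfe(\CM)$ in $\aute(\CM)$ (Remark \ref{bddequiv}(3)). Surjectivity is immediate, since every $m\models p_0$ in $\CM$ equals $f(n)$ for some $f\in\aute(\CM)$.

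It remains to transport the group operation and recognize that it is absolute. Along $\Phi_\CM$, the product of $q_1=\Ltp(m_1/\bs e)$ and $q_2=\Ltp(m_2/\bs e)$ in $X_\CM$ becomes $\Ltp(f_1(m_2)/\bs e)$, where $m_i\models q_i$ and $f_1\in\aute(\CM)$ is chosen with $f_1(n)=m_1$; a short verification using the equivalence above and normality of $\autfe(\CM)$ shows this is independent of the choices, and it plainly matches $\ov{f_1f_2}$. Now $\equiv^{\LL}_{\bs e}$ is absolute by Remark \ref{preli}(2), so for monsters $\CM_1\prec\CM_2$ (the general case reduces to this by passing to a common elementary extension) the $\bar\kappa$-saturation of $\CM_1$ yields a bijection $X_{\CM_1}\to X_{\CM_2}$ that respects the operation, because $p_0$, the relation $\equiv^{\LL}_{\bs e}$, and the action of $\aut_{\bs e}$ on realizations of $p_0$ all transfer verbatim between the two monsters. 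Hence $\gall(\CM_1,\bs e)\cong X_{\CM_1}\cong X_{\CM_2}\cong\gall(\CM_2,\bs e)$.

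The only real friction I anticipate is the hyperimaginary bookkeeping: confirming carefully that $\tp(m_0/\bs e)$ genuinely encodes ``$m$ enumerates a small model $M$ with $\bs e\in\dcl(M)$'', and that the $\equiv^{\LL}_{\bs e}$-classes of such enumerations transfer between monsters, given that $\bs e$ is only type-definable and $\dcl(\bs e)$ may be an infinite (though bounded) object. All of this is routine in the real-parameter case and, thanks to Remark \ref{preli}, nothing genuinely new happens here; but it is the portion that must be written with care. One could instead hope to deduce the statement from the real-parameter version via a coding of $\bs e$, but since a hyperimaginary need not be interdefinable with a real tuple there is no such reduction, and the direct argument above seems cleanest.
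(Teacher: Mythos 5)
Your proposal is correct and is essentially the approach the paper intends (and delegates to \cite[Remark 5.1.2]{K}): a class $\ov f$ is determined by the Lascar strong type over $\bs e$ of the image of a fixed small model $M_0$ with $\bs e\in\dcl(M_0)$, and this datum, together with $\equiv^{\LL}_{\bs e}$ itself (Remark \ref{preli}), is absolute between monster models, so the group and its operation can be read off from model-independent data. The only cosmetic point is that ``enumerating an elementary submodel'' is not literally a formula-by-formula member of $\tp(m_0/\emptyset)$; what you actually use is that any realization of $p_0$ is the image of $m_0$ under a partial elementary map (or an $\bs e$-automorphism, by homogeneity), which indeed enumerates an elementary submodel $M$ with $\bs e\in\dcl(M)$.
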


The same proof for the real parameter case  (e.g. \cite[Remark 5.1.2]{K}) would work, so  we omit it.

\begin{Definition}
We call $\gall(\CM,\bs e)$ the {\em Lascar group} of $T$ over $\bs{e}$. Due to Theorem \ref{gallindmon},  we omit $\CM$ and  denote it by
$
\gall(T,\bs{e}).
$
\end{Definition}

\begin{Corollary}\label{corgalsize}
$|\operatorname{Gal}_{\operatorname{L}}(T,\bs{e})| \leq 2^{|T| + |a|}$.
\end{Corollary}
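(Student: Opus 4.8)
The plan is to bound $\gall(T,\bs e)$ by the number of complete types over a small model that contains a representative of $\bs e$, exactly as one does in the real-parameter case. First I would apply downward L\"owenheim--Skolem to fix an elementary submodel $M\prec\CM$ with $a\subseteq M$ and $|M|\le |T|+|a|$; since $\bs e=a_E\in\dcl(a)\subseteq\dcl(M)$, the model $M$ is of the kind to which Remark \ref{preli}(3) applies. Enumerating the universe of $M$ as a tuple of length $|M|$, each $f\in\aute(\CM)$ yields a complete type $\tp(f(M)/M)\in S_{|M|}(M)$.

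The key step is the instance $N=M$ of Remark \ref{preli}(3): if $f,g\in\aute(\CM)$ and $f(M)\equiv_M g(M)$, then $\ov f=\ov g$. Hence the assignment $\tp(f(M)/M)\mapsto\ov f$ is a well-defined map from $X:=\{\tp(f(M)/M)\mid f\in\aute(\CM)\}\subseteq S_{|M|}(M)$ onto $\gall(T,\bs e)$: it is well defined precisely by the displayed implication, and it is surjective because every element of $\gall(T,\bs e)$ has the form $\ov f$ for some $f\in\aute(\CM)$. Therefore $|\gall(T,\bs e)|\le |X|\le |S_{|M|}(M)|$.

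Finally, counting $\CL$-formulas with parameters from $M$ gives $|T|+|M|=|T|+|a|$ many of them (using $|T|\ge\aleph_0$), so $|S_{|M|}(M)|\le 2^{|T|+|a|}$, which yields the claimed bound. There is no substantial obstacle here; the one point to handle with care is the direction of the map: one cannot define $\ov f\mapsto\tp(f(M)/M)$ directly, since $\ov f=\ov g$ does not give $f(M)\equiv_M g(M)$ (the automorphisms generating $\autfe(\CM)$ fix \emph{some} model, not necessarily $M$), so the argument must proceed through the inverse assignment, whose well-definedness is exactly the content of Remark \ref{preli}(3).
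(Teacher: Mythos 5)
Your proof is correct and follows essentially the same route as the paper: both bound $|\gall(T,\bs e)|$ by the number of complete types of $f(M)$ over a small model containing $a$, using Remark \ref{preli}(3) to see that $\tp(f(M)/M)\mapsto \ov f$ is well defined and surjective (the paper phrases this with a pair $M,N$ but then takes both of size $|T|+|a|$, which amounts to your choice $N=M$). Your remark about the direction of the map is exactly the point the paper's setup of $\nu$ encodes, so there is nothing to add.
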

\begin{proof}
By Remark \ref{preli}(3), any $\overline{f}$ is determined by any $M,N \models T$ such that $\bs{e} \in \operatorname{dcl}(M), \operatorname{dcl}(N)$.
Then $|\{\operatorname{tp}(f(M)/N) : f \in \operatorname{Aut}_{\bs{e}}(\CM)\}| \leq 2^{|M| + |N| + |T|}$, and letting $M,N$ contain $a$ and have cardinality $|T| + |a|$, we see that $|\operatorname{Gal}_{\operatorname{L}}(T,\bs{e})| \leq 2^{|T| + |a|}$.
\end{proof}

\begin{Definition}
Let $M, N \prec \CM$ such that  $\bs e\in\dcl(M),\dcl(N)$.
Define $S_{M,\bs e}(N)=S_M(N): = \{\operatorname{tp}(f(M)/N) : f \in \operatorname{Aut}_{\bs{e}}(\CM)\}$.
\end{Definition}

We give  the relative Stone topology on $S_M(N)$:
As usual, let $S_x(N) = \{p(x) : |x| = |M|$ and $p$ is a complete type over $N\}$ be the Stone space
(that is,  the basic open sets are  of the form $[\varphi] = \{p \in S_x(N) : \varphi \in p\}$ for $\varphi \in \CL(N)$).
Then $S_M(N)$ is a subspace of $S_x(N)$. We remark as follows that 
$S_M(N)$ is a compact (i.e. quasi-compact and Hausdorff) space.

\begin{Remark} Let $r(x,a)$ be a partial type over $a$ that describes $\tp_x(M/\bs e)$.
Since $\bs e\in \dcl(N)$, we have $r_0(x,N):=\tp(a/N)\models E(x,a)$.
Hence $\tp_x(M/\bs e)$ can also be described by
$$r'(x,N)=\exists y (r(x,y)\wedge r_0(y,N)).$$
Thus $S_M(N)=\{p \in S_x(N) : r'(x,N) \subseteq p\}$ is a closed (so compact) subspace of the compact space $S_x(N)$.
\end{Remark}

Due to Remark \ref{preli}(3), there are well-defined maps $\mu,\nu$
\[\begin{matrix}
    \operatorname{Aut}_{\bs{e}}(\CM) \overset{\mu}{\rightarrow} S_M(N) \overset{\nu}{\rightarrow} \operatorname{Gal}_{\operatorname{L}}(T, \bs{e}) \\
    f \mapsto \operatorname{tp}(f(M)/N) \mapsto \overline{f}
\end{matrix}\]
 such  that $\pi=\nu\circ \mu$. 
We give $\gall(T,\bs{e})$ the quotient topology with respect to $\nu$.
Notice that  $S_M(N) \overset{\nu \text{ : cts}}{\longrightarrow} \operatorname{Gal}_{\operatorname{L}}(T,\bs{e})$ implies $\operatorname{Gal}_{\operatorname{L}}(T,\bs{e})$ is quasi-compact (but not necessarily Hausdorff). 

\medskip

We point out another independence. 

\begin{Proposition}\label{indepmn}
The topology on $\operatorname{Gal}_{\operatorname{L}}(T,\bs{e})$ does not depend on the choice of $M$ and $N$, i.e. any pair of  models $M'$ and $N'$ such that $\bs{e} \in \operatorname{dcl}(M'), \operatorname{dcl}(N')$ induces the same topology.
\end{Proposition}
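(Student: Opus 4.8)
The plan is to compare two choices $(M,N)$ and $(M',N')$ by routing both through a single larger pair $(M^*,N^*)$. First I would produce such a pair: by downward L\"owenheim--Skolem inside $\CM$, pick $M^*\prec\CM$ with $M\cup M'\subseteq M^*$, and then $N^*\prec\CM$ with $N\cup N'\cup M^*\subseteq N^*$ (enlarging the enumerations compatibly). By Tarski--Vaught $M,M'\prec M^*$ and $N,N'\prec N^*$, and since $\bs e\in\dcl(N)\subseteq\dcl(N^*)$ and likewise for $M^*$, both $M^*$ and $N^*$ again contain $\bs e$ in their definable closure, so they are permissible choices. Thus it suffices to prove: whenever $(M,N)$ and $(M^*,N^*)$ are such pairs with $M\subseteq M^*$, $N\subseteq N^*$ and compatible enumerations, the two pairs induce the same quotient topology on $\gall(T,\bs e)$; then $(M,N)$ and $(M',N')$ both induce the topology of $(M^*,N^*)$.

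For the reduced statement I would first record the elementary fact that if $\rho\colon X\to Y$ is a quotient map and $\nu\colon Y\to Z$ a surjection, then the quotient topology on $Z$ determined by $\nu$ coincides with the one determined by $\nu\circ\rho$ (a set $U\subseteq Z$ has open $\nu$-preimage in $Y$ iff its $(\nu\circ\rho)$-preimage is open in $X$, the only nonformal direction being the definition of a quotient map). Next I would check that the restriction map
$$\rho\colon S_{M^*}(N^*)\to S_M(N),\qquad \tp(g(M^*)/N^*)\mapsto\tp(g(M)/N)\quad(g\in\aut_{\bs e}(\CM))$$
is well defined (it depends only on the type and the fixed enumerations, not on $g$), continuous (restriction of types is continuous on Stone spaces), and surjective ($\tp(f(M^*)/N^*)$ restricts to $\tp(f(M)/N)$). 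Since $S_{M^*}(N^*)$ is quasi-compact and $S_M(N)$ is Hausdorff, $\rho$ is a closed map, hence a quotient map. Finally, by Remark \ref{preli}(3) the projections to the Lascar group satisfy $\nu^*=\nu\circ\rho$, where $\nu\colon S_M(N)\to\gall(T,\bs e)$ and $\nu^*\colon S_{M^*}(N^*)\to\gall(T,\bs e)$; combined with the topological fact, this gives that $\nu^*$ and $\nu$ induce the same topology on $\gall(T,\bs e)$, which is what we want.

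I do not anticipate a genuine obstacle: the argument is the standard "a continuous surjection from a compact space onto a Hausdorff space is a quotient map" together with "quotient maps compose", the only real care being in the bookkeeping of enumerations so that the $\rho$'s are honestly restriction maps, and in checking that the common enlargement $(M^*,N^*)$ can be chosen with $M,M',N,N'$ as \emph{elementary} substructures and with $\bs e$ in the dcl of each member --- which is exactly Tarski--Vaught plus the trivial monotonicity $\dcl(N)\subseteq\dcl(N^*)$.
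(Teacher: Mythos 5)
Your proof is correct. The paper omits this argument entirely (deferring to the real-parameter case), and what you give is essentially that standard proof: pass to a common small elementary pair $(M^*,N^*)$, observe that the restriction map $S_{M^*}(N^*)\to S_M(N)$ is a well-defined continuous surjection from a quasi-compact space onto a Hausdorff one (hence closed, hence a quotient map) with $\nu^*=\nu\circ\rho$ by Remark \ref{preli}(3), and conclude via the composition of quotient maps; the only hyperimaginary-specific point, that $\bs e$ stays in the definable closure of the enlarged models, is exactly the monotonicity you noted.
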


Again the proof is the same as the real parameter  case, so we omit.  

\medskip

Now  we  begin to show that $\gall(T,\bs e)$ is a topological group with the equipped topology.
Our proof of this  is more direct than that in  \cite{Z}.
Especially, we do not appeal to the argument extending the language. 
Recall that a group $G$ equipped with a topology is called a topological group if the group operations $G \times G \rightarrow G : (x,y) \mapsto xy$, $G \rightarrow G : x \mapsto x^{-1}$ are continuous.

\medskip


Thanks to Proposition \ref{indepmn}, from now on, we  \textbf{fix a small model $M$ containing $a$} and work with the relative compact  Stone space $S_{M,\bs e}(M)=S_M(M)$.
As said,  we have the following maps:
\[\operatorname{Aut}_{\bs{e}}(\CM) \overset{\mu}{\longrightarrow} S_M(M) \overset{\nu}{\longrightarrow} \operatorname{Gal}_{\operatorname{L}}(T,\bs{e})\]
\[f \mapsto p=\tp(f(M)/M)  \mapsto \nu(p) =\ov f\]

\medskip

\begin{Proposition}\label{invcts}
The inverse operation on $\gall(T,\bs e)$ is continuous.
\end{Proposition}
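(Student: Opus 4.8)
The plan is to transfer continuity from the Stone space $S_M(M)$, where things are concrete, up to the quotient $\gall(T,\bs e)$ using the quotient topology with respect to $\nu$. Since $\nu$ is a topological quotient map, a map $g\colon\gall(T,\bs e)\to\gall(T,\bs e)$ is continuous as soon as $g\circ\nu\colon S_M(M)\to\gall(T,\bs e)$ is continuous; and by the universal property of the quotient again, that composite is continuous provided it factors through $\nu$ via a continuous map on $S_M(M)$. So the real content is to produce a continuous map $\iota\colon S_M(M)\to S_M(M)$ which on the level of automorphisms corresponds to $f\mapsto f^{-1}$, i.e.\ which makes the square with $\mu$ and the inversion on $\aute(\CM)$ commute, and then to check $\nu\circ\iota$ depends only on $\nu$.

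First I would define $\iota$ concretely. Given $p=\tp(f(M)/M)\in S_M(M)$, I want $\iota(p)=\tp(f^{-1}(M)/M)$; the issue is that $p$ does not literally remember $f$. The standard move: fix once and for all an enumeration of $M$, so $f(M)$ is recorded as a tuple, and observe that $\tp(f(M)/M)$ together with this bookkeeping lets one recover $\tp\bigl(M\,{}^\frown f^{-1}(M)\bigr)$ from $\tp\bigl(f^{-1}(M)\,{}^\frown M\bigr)$ by swapping the two blocks of variables — more precisely, $\tp(f^{-1}(M)/M)$ is obtained from $\tp(f(M)/M)$ by the coordinate permutation that exchanges the "parameter" copy of $M$ with the "variable" copy. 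Concretely, if $p(x)$ lists the $\CL(M)$-formulas satisfied by $f(M)$, then $\iota(p)$ is the type whose $\CL(M)$-part is read off by $\psi(x)\in\iota(p)\iff \psi^{\mathrm{sw}}\in \tp(M\,{}^\frown f(M))$, where $\psi^{\mathrm{sw}}$ swaps object and parameter variables. This is visibly a well-defined map $S_x(M)\to S_x(M)$ (it is induced by a bijection of variable sets, hence is a homeomorphism of the ambient Stone space), and it carries $S_M(M)$ into $S_M(M)$ because $f\mapsto f^{-1}$ preserves $\aute(\CM)$. Continuity is then immediate: the preimage of a basic clopen $[\varphi]$ is the clopen $[\varphi^{\mathrm{sw}}]$ intersected with $S_M(M)$.

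Next, well-definedness on the quotient: I need that if $\tp(f(M)/M)=\tp(g(M)/M)$ — in fact it suffices that $\ov f=\ov g$, but equality of types is what $\iota$ is fed — then $\overline{f^{-1}}=\overline{g^{-1}}$, so that $\nu\circ\iota$ factors through $\nu$. This follows from Remark 1.10(3): from $\tp(f(M)/M)=\tp(g(M)/M)$ we get $f(M)\equiv_M g(M)$, so $\overline f=\overline g$; applying an automorphism and using normality of $\autfe(\CM)$ in $\aute(\CM)$ (Remark 1.4(3)) one gets $\overline{f^{-1}}=\overline{f}^{-1}=\overline{g}^{-1}=\overline{g^{-1}}$. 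Hence $\nu\circ\iota\colon S_M(M)\to\gall(T,\bs e)$ is constant on $\nu$-fibers, so it descends to a map $j\colon\gall(T,\bs e)\to\gall(T,\bs e)$ with $j\circ\nu=\nu\circ\iota$, and by construction $j(\ov f)=\ov{f^{-1}}=(\ov f)^{-1}$, i.e.\ $j$ is the inversion map. Since $\nu$ is a quotient map and $\nu\circ\iota$ is continuous (being $\iota$ followed by the continuous $\nu$), $j$ is continuous, which is the claim.

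I expect the only delicate point to be the bookkeeping in the definition of $\iota$: one must be scrupulous that the "swap of variables" is literally an involution on the index set of the (possibly infinite) enumeration of $M$, so that $\psi\mapsto\psi^{\mathrm{sw}}$ is a well-defined involutive renaming on $\CL(M)$-formulas and hence induces a homeomorphism of $S_x(M)$; and that $\iota$ indeed lands in the subspace $S_M(M)=\{p:r'(x,M)\subseteq p\}$, which is exactly the statement that $f\in\aute(\CM)$ iff $f^{-1}\in\aute(\CM)$. Everything else — continuity of $\iota$, factoring through the quotient, and identifying the induced map as inversion — is formal once that setup is in place. Note the argument uses nothing about $\bs e$ beyond Remark 1.10(3) and $\bs e\in\dcl(M)$, so it is genuinely the same in spirit as the real case but avoids any language extension.
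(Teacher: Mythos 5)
Your argument is correct, but it is packaged differently from the paper's. The paper's proof never constructs a map on $S_M(M)$: it fixes a closed $C\subseteq\gall(T,\bs e)$ with $\nu^{-1}(C)$ type-defined by $\Phi(y)$ and verifies directly that the $\nu$-preimage of $B=\{\ov f\mid \ov{f^{-1}}\in C\}$ is the closed subset of $S_M(M)$ cut out by $\Psi(x)=\exists y\bigl((xM\equiv_{\bs e} My)\wedge\Phi(y)\bigr)$, i.e.\ it checks by hand that inversion pulls closed sets back to closed sets. You instead lift inversion to an explicit continuous self-map $\iota$ of $S_M(M)$ (the variable/parameter swap, which does send $\tp(f(M)/M)$ to $\tp(f^{-1}(M)/M)$ and, by the formula-level description $\varphi\in\iota(p)\Leftrightarrow\varphi^{\mathrm{sw}}\in p$, depends only on the type and not on the choice of $f$), and then descend through the quotient map $\nu$ using $\ov{f^{-1}}=(\ov f)^{-1}$. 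Both proofs rest on the same key fact --- that $\tp(f^{-1}(M)/M)$ is determined, uniformly in formulas, from $\tp(f(M)/M)$ (the paper encodes this by the relation $xM\equiv_{\bs e}My$, you by the syntactic swap) --- but your version isolates it as a commuting-square statement, which is arguably cleaner and reusable, while the paper's closed-set computation is shorter and is the template it then adapts for the harder continuity of multiplication in Theorem \ref{thmmulcon}, where no such deterministic lift exists.

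One inaccuracy to remove: the parenthetical claim that the swap is induced by a bijection of variable sets and hence gives a homeomorphism of the ambient Stone space $S_x(M)$ is false, since the swap exchanges variables with parameters. For an arbitrary $p\in S_x(M)$ the swapped set of formulas need not be consistent: with a unary predicate $P$ and $\models P(m_1)$, the formula $P(m_1)\wedge\neg P(x_1)$ can belong to $p$ while its swap $P(x_1)\wedge\neg P(m_1)$ is inconsistent over $M$. This does not damage your proof, because you only need $\iota$ on the subspace $S_M(M)$, where $\iota(p)=\tp(f^{-1}(M)/M)$ is automatically a genuine type, and your continuity computation $\iota^{-1}\bigl([\varphi]\cap S_M(M)\bigr)=[\varphi^{\mathrm{sw}}]\cap S_M(M)$ takes place entirely inside $S_M(M)$; just state $\iota$ as a map $S_M(M)\to S_M(M)$ and drop the assertion about $S_x(M)$.
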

\begin{proof}
Let $C$ be a closed subset of $\gall(T,\bs e)$. We want to show $B=\{\ov{f}\in \gall(T,\bs e)\mid \ov{f^{-1}}\in C\}$ is closed. 
Now assume $\nu^{-1}(C)(\subseteq S_M(M))$ is type-defined by say $\Phi(y)$ over $M$. Consider a type over $M$
$$\Psi(x)=\exists y ((xM\equiv_{\bs e} My)\wedge \Phi(y)).$$
Let $B_0=\{p(x)\in S_M(M)\mid \Psi(x)\subseteq p(x)\}$, which  is closed.

It suffices to see that $B_0=\nu^{-1}(B)$: Let $q=\tp(M_0/M)\in B_0$. Then there is $M_1$ such that 
$M_0M\equiv_{\bs e} MM_1$ and $\tp(M_1/M)\in \nu^{-1}(C)$. Now there is $f\in\aute(\CM)$ such that $f(MM_1)=M_0M$. Thus $f^{-1}(M)=M_1$ and 
$\ov{f^{-1}}\in C$. Therefore $\ov{f} \in B$, and $q\in \nu^{-1}(B)$ since $q=\tp(f(M)/M)$.

Conversely, let $q = \tp(f(M)/M) \in \nu^{-1}(B)$.
Then $\tp(f^{-1}(M)/M) \in \nu^{-1}(C)$, thus $f^{-1}(M) \models \Phi(y)$.
Also, clearly $f(M)M \equiv_{\bs e} Mf^{-1}(M)$, hence $q = \tp(f(M)/M) \in B_0$.
\end{proof}

Showing the product map of  $\gall(T,\bs e)$ is continuous is more complicated and we need preparatory work. Recall that 
$x_0x_1\dots x_n\equiv_{\bs e} x_{i_0}\dots x_{i_n}$ means
$ \exists y ((x_0\dots x_na\equiv  x_{i_0}\dots x_{i_n}y) \wedge E(y,a)). $

\begin{Definition}\label{Gamdef}
Let $\Gamma(x_0,x_1)$ be a partial type over $a$ saying there is an $\bs{e}$-indiscernible sequence beginning  with $x_0, x_1$, that is 
$$\exists  x_2x_3\dots (x_0x_1\dots x_n\equiv_{\bs e} x_{i_0}\dots x_{i_n}\mid  i_0<\dots < i_n\in \omega, \ 1\leq n ). $$ 
\end{Definition}

We will freely use the following.

\begin{Remark}\label{remgaminv} $ $
\be\item
    We can assume that  each formula in $\Gamma$ is reflexive and symmetric since $\Gamma$ itself is so, and $\Gamma$ is   closed under finite conjuction of formulas. 
    \item
    Due to Remark \ref{preli}(2), $x\equiv^{\LL}_{\bs e} y$ is  the transitive closure of $\Gamma(x,y)$.
    
     \item Assume $\delta\in \Gamma$. Then there is $n\in\omega$ such that there exists  at most length $n$ antichain  $c_0,\dots , c_n$  of $\delta$  (i.e.
     $\neg\delta(c_i,c_j) $ for $i<j\leq n$):  Otherwise by Ramsey, there is an $a$-indiscernible sequence $I=\la c_i\mid i<\omega\ra$ 
    such that $\neg \delta (c_0,c_1)$ holds. Then since $I$ is $\bs e$-indiscernible as well $\Gamma (c_0, c_1)$ must hold, a contradiction.
    \ee 
\end{Remark}

\begin{Remark}\label{remnulas} 
For $p_i\in S_M(M)$  $(i=0,1$), we write  $p_0 \approx p_1$ if $\nu(p_0) = \nu(p_1)$.
Then 
$p_0 \approx p_1$ iff   $M_0 \equiv^{\operatorname{L}}_{\bs{e}} M_1$ for any(some) $M_i\models p_i$.
\end{Remark}

\begin{proof}  Assume  $M_i\models p_i$. Then there are  $f_i\in\aute(\CM)$ such that   $f_i(M)=M_i$.
Now assume $p_0\approx p_1$,  so  $\ov{f_0}=\ov{f_1}$.
 Hence there is $g \in \operatorname{Autf}_{\bs{e}}(\CM)$ such that $f_0= gf_1$, so $M_0 =f_0(M)= g(f_1(M))=g(M_1)$.
Therefore  $M_0 \equiv^{\operatorname{L}}_{\bs{e}} M_1$.

Conversely assume
 $M_0 \equiv^{\operatorname{L}}_{\bs{e}} M_1$. Then there is $g \in \operatorname{Autf}_{\bs{e}}(\CM)$ such that $M_0 = g(M_1)$,
  so $\operatorname{tp}(f_0(M)/M) = \operatorname{tp}(g(f_1(M))/M)$.
Thus   $\overline{f_0} = \ov{gf_1}=\overline{f_1}$ by Remark \ref{preli}(3),  and $\nu(p_0) = \nu(p_1)$.
\end{proof}

\begin{Remark/defn}\label{drel} $ $
\be
\item  Let 
$$S_{M^2}(M):=\{\tp(f(M)g(M)/M) \mid f,g \in \aute(\CM)\}.$$
Then the map $r: S_{M^2}(M)\to S_M(M)\times S_M(M)$ 
sending $\tp(M_0M_1/M)$ to 
$(\tp(M_0/M),\tp(M_1/M))$ is a closed map since it is a continuous map between compact spaces.
\item 
We define a binary relation $D$ on $S_M(M)$ by, $D(p,q)$ holds iff there are $M' \models p$, $M'' \models q$ such that $\models \Gamma(M', M'')$, i.e.
$D(x,y)$ is type-defined by 
$$\tp_x(M/\bs{e})  \wedge \tp_y(M/\bs{e})  \wedge  \Gamma(x,y).$$
Then by Remark \ref{remgaminv}(2) and \ref{remnulas}, $p \approx q$ is the transitive closure of $D$.
Now by (1), $D$ is a closed subset of $S_M(M) \times S_M(M)$.
\item For $p\in S_M(M)$, we let $D[p] = \{q \in S_M(M) : D(p,q)\}$, i.e.
$D[p](x)$ is type-defined by $\exists y(p(y)\wedge \Gamma(x,y)).$
It is closed so compact in $S_M(M)$. For $q\in D[p]$, $\nu(p)=\nu(q)$.

\item For $\delta(x',y')=\delta(x',y',a)\in \Gamma$, let 
$$D_\delta[p]=\{q\in S_M(M)\mid \exists y (p(y) \wedge \delta(x',y'))\subseteq q(x)\}.$$
Note that $p\in D[p]=\bigcap_{\delta\in\Gamma} D_\delta[p]$. Hence by compactness for any open $O\subseteq S_M(M)$,
$D[p]\subseteq O$ iff for some $\delta\in\Gamma$, $D_\delta[p]\subseteq O$.
\ee
\end{Remark/defn}

\begin{Lemma} \label{intlem}
Let $p\in S_M(M)$.
For any $\delta \in \Gamma$, $p \in \intr (D_{\delta}[p])$.
\end{Lemma}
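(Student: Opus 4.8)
\textbf{Proof plan for Lemma \ref{intlem}.}
The goal is to show that $p$ lies in the interior of $D_\delta[p]$ for each $\delta\in\Gamma$. The idea is to exhibit an explicit formula $\psi\in\CL(M)$ with $\psi\in p$ such that $[\psi]\cap S_M(M)\subseteq D_\delta[p]$, using the antichain bound from Remark \ref{remgaminv}(3). First I would fix $\delta=\delta(x',y',a)\in\Gamma$; by Remark \ref{remgaminv}(1) we may assume $\delta$ is reflexive and symmetric, and by Remark \ref{remgaminv}(3) there is $n<\omega$ bounding the length of any antichain of $\delta$. I would then consider, for a realization $M_p\models p$, the chain of $\bs e$-conjugates of $M_p$ linked successively by $\delta$: since $\delta\in\Gamma$, $M_p$ begins an $\bs e$-indiscernible sequence, so in particular $\delta(M_p,M_p,a)$ holds (reflexivity) and, more to the point, $D_\delta[p]$ is nonempty and contains $p$ itself.

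The key step is a compactness/pigeonhole argument. Suppose toward a contradiction that $p\notin\intr(D_\delta[p])$. Then every formula $\psi\in p$ has a realization outside $D_\delta[p]$, i.e. the partial type $\tp_x(M/\bs e)\cup\{\neg(\exists y(p(y)\wedge\delta(x',y')))\}$, spelled out appropriately over $M$, is consistent together with $p$-many instances. More usefully, I would build an infinite sequence $M_0,M_1,M_2,\dots$ of realizations of $p$ (all $\equiv_M$, hence all with the same type over $M$, in particular all in $S_M(M)$) such that $\neg\delta(M_i, M_j,a)$ for all $i\ne j$: at stage $k$, having chosen $M_0,\dots,M_{k-1}$ all realizing $p$ and pairwise $\neg\delta$-related, the assumption $p\notin\intr(D_\delta[p])$ (equivalently, every neighbourhood of $p$ meets the complement of $D_\delta[p]$) should let me find $M_k\models p$ with $\neg\delta(M_k,M_i,a)$ for $i<k$ — here I must be careful that the relevant statement "$M_k$ is $\delta$-apart from all previous $M_i$" is expressible over $M$, which it is since each $M_i$ is a fixed tuple in $\CM$ and $\delta$ has only finitely many free variables (so only finite subtuples of the $M_i$ are involved). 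This produces an $a$-indiscernible-after-extraction, or at least an infinite $\delta$-antichain, contradicting the bound $n$.

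The main obstacle I anticipate is the bookkeeping in the inductive construction: I need the negation "$M_k$ is not $\delta$-related to $M_i$" to be witnessed by a genuine first-order formula over $M$ so that I can invoke the hypothesis $p\notin\intr(D_\delta[p])$ at each finite stage, and I must ensure the chosen $M_k$ genuinely realizes the full type $p$ over $M$ rather than merely an approximation — this is where quasi-compactness of $S_M(M)$ and the fact that $[\psi]\cap S_M(M)$ is the basic neighbourhood system of $p$ are used. Once the infinite $\delta$-antichain $\{M_i\mid i<\omega\}$ is in hand, applying Ramsey/extraction to get an $a$-indiscernible sequence with $\neg\delta(c_0,c_1)$ contradicts Remark \ref{remgaminv}(3) exactly as in the proof of that remark, completing the argument. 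An alternative, possibly cleaner, route avoiding explicit antichain construction: observe that $D_\delta[p]=\{q\mid \exists y(p(y)\wedge\delta(x',y'))\subseteq q\}$ is a \emph{closed} set, show its complement $U$ is such that $p\in U$ would force, via iterating $\delta$ within $U$, an unbounded $\delta$-antichain inside the compact space, again contradicting Remark \ref{remgaminv}(3); I would present whichever of these two phrasings turns out shorter after checking the definability details.
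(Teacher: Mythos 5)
Your opening sentence states the right target (a formula $\psi\in p$ with $[\psi]\subseteq D_\delta[p]$, obtained from the antichain bound), but the argument you actually run — contradiction via an infinite $\delta$-antichain — has a genuine gap at the inductive step. The hypothesis $p\notin\intr(D_\delta[p])$ only tells you that for every $\psi\in p$ there is some $q\in[\psi]\setminus D_\delta[p]$; a realization of such a $q$ is \emph{not} a realization of $p$ (note $p\in D_\delta[p]$ always, by reflexivity of $\delta$), so it gives you no $M_k\models p$ at all, let alone one with $\neg\delta(M_k,M_i,a)$ for $i<k$. Quasi-compactness of $S_M(M)$ cannot repair this: the complement of $D_\delta[p]$ is \emph{open}, so from types realizing ever larger finite parts of $p$ and lying outside $D_\delta[p]$ you cannot extract a limit type that still lies outside — the only limit along $p$ is $p$ itself, which lies inside. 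If instead you settle for realizations of the nearby types $q\notin D_\delta[p]$, you only know they avoid $\delta$-relation with realizations of $p$, not with each other, so they do not form an antichain either. Finally, the intended contradiction is miscalibrated: an infinite (or length $>n$) $\delta$-antichain consisting of realizations of $p$ is impossible by Remark \ref{remgaminv}(3) \emph{regardless} of whether $p\in\intr(D_\delta[p])$, so the interior hypothesis never enters; your construction simply halts at some finite stage for reasons unrelated to it.

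The fix is exactly what happens at the stage where your construction gets stuck, and it is the paper's direct proof: take a \emph{maximal} antichain $\la M_0,\dots,M_n\ra$ of $\delta$ among realizations of $p$ (finite by Remark \ref{remgaminv}(3)). Maximality means every $M'\models p$ satisfies $\delta(M_i,M')$ for some $i$, i.e. $p(y)\models\bigvee_{i\leq n}\delta(M_i,y)$; by compactness there is a single formula $\psi\in p$ with $\psi(y)\models\bigvee_{i\leq n}\delta(M_i,y)$. Then for any $q\in[\psi]$ and $M'\models q$ we get $\delta(M_j,M')$ with $M_j\models p$, so $q\in D_\delta[p]$; hence $p\in[\psi]\subseteq D_\delta[p]$ and $p\in\intr(D_\delta[p])$. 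No contradiction argument, and no construction beyond the finite maximal antichain, is needed.
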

\begin{proof}
As in Remark \ref{remgaminv}(3), there is a maximal antichain
$\la M_0, \dots, M_n \ra $  of $\delta$  in $p$, so that for any $M' \models p$, there is $i$ such that $\models \delta(M_i,M')$.
Then $p(y) \models \bigvee^n_{i=0} \delta(M_i,y)$, so there is $\psi \in p$ such that $\psi(y) \models \bigvee^n_{i=0} \delta(M_i,y)$.
Note that $p \in [\psi]$, which is open.
We claim $[\psi] \subseteq D_{\delta}[p]$: For any $q \in [\psi]$, since $\psi \models \bigvee^n_{i=0}\delta(M_i,y)$, if $M' \models q$, then there is $j$ satisfying $\delta(M_j,M')$.
Thus  $q\in D_\delta[p]$, so   $p \in \intr(D_{\delta}[p])$.
\end{proof}

The following lemma and corollary  will play a critical role as in \cite{Z}.  We modify/simplify the proof.

\begin{Lemma} \label{prekeylem}
 Assume $W(\subseteq S_M(M))$ is closed under $\approx$, i.e. $W=\nu^{-1}(\nu(W))$.
Let $O=\intr(W)$, and let $U=\{p\in S_M(M)\mid D[p]\subseteq O \}(\subseteq O)$.
\be\item
For $p\in O$, the following are equivalent.
\be\item
$\nu^{-1}(\nu(p))\subseteq O$, i.e. $q\in O$ for any $q\approx p$.
\item
$D[p]\subseteq O$
\item  $D_\delta[p]\subseteq O$ for some $\delta\in \Gamma$.
\ee
\item
$U=\{p\in S_M(M)\mid \nu^{-1}(\nu(p)) \subseteq O\}$ so $U=\nu^{-1}(\nu(U))$, and $U$ is open.
\ee
\end{Lemma}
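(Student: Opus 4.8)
The plan is to dispose of the two easy implications and the topological half of part (2) quickly, and to spend the effort on $(b)\Rightarrow(a)$ in part (1), which is the only place Lemma \ref{intlem} is really needed. First, $(b)\Leftrightarrow(c)$ is literally the last sentence of Remark/defn \ref{drel}(4), since $O=\intr(W)$ is open by definition. And $(a)\Rightarrow(b)$ is immediate: by Remark/defn \ref{drel}(3) we have $\nu(q)=\nu(p)$ for every $q\in D[p]$, so $D[p]\subseteq\nu^{-1}(\nu(p))$, and $(a)$ forces $D[p]\subseteq O$.

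Now for $(b)\Rightarrow(a)$. Assume $D[p]\subseteq O$ and, using $(c)$, fix once and for all a $\delta\in\Gamma$ with $D_\delta[p]\subseteq O\subseteq W$. Let $q\approx p$; we must show $q\in\intr(W)=O$. By Lemma \ref{intlem}, $q\in\intr\big(D_{\delta'}[q]\big)$ for every $\delta'\in\Gamma$, so it suffices to produce one $\delta'\in\Gamma$ with $D_{\delta'}[q]\subseteq W$, since then $q\in\intr\big(D_{\delta'}[q]\big)\subseteq\intr(W)=O$. To find such a $\delta'$, first use Remark \ref{remnulas} together with Remark \ref{preli}(2) to fix realizations $M_q\models q$, $M_p\models p$ and a finite chain $M_q=L_0,L_1,\dots,L_k=M_p$ with $\Gamma(L_i,L_{i+1})$ for each $i<k$. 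Automorphisms fixing $M$ pointwise lie in $\aute(\CM)$ (as $\bs e\in\dcl(M)$), act trivially on $S_M(M)$, and preserve $W$ (since $W$ is closed under $\approx$); using this one may confront the chain $L_0,\dots,L_k$ directly with an arbitrary witness of membership in $D_{\delta'}[q]$. Running that witness against the chain and the fixed formula $\delta$, and invoking the bound on $\delta$-antichains from Remark \ref{remgaminv}(3) to keep the number of relevant steps finite, one shows that $D_{\delta'}[q]$ is swept, via automorphisms fixing $M$, into $D_\delta[p]\subseteq W$, so $D_{\delta'}[q]\subseteq W$, as required; finally $\nu^{-1}(\nu(p))\subseteq O$ follows since $q\approx p$ was arbitrary.

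For part (2): combining the definition of $U$ with $(a)\Leftrightarrow(b)$ (and noting $p\in D[p]$ by reflexivity of $\Gamma$, so $D[p]\subseteq O$ already entails $p\in O$) gives $U=\{p:\nu^{-1}(\nu(p))\subseteq O\}$; this description exhibits $U$ as a union of $\approx$-classes, i.e. $U=\nu^{-1}(\nu(U))$. And $U$ is open because its complement is
\[
S_M(M)\setminus U=\big\{p:\exists\, r\in S_M(M)\setminus O\ \text{with}\ D(p,r)\big\}=\pi_1\!\big(D\cap\big(S_M(M)\times(S_M(M)\setminus O)\big)\big),
\]
where $\pi_1$ is the first coordinate projection; since $D$ is closed (Remark/defn \ref{drel}(2)) and $S_M(M)\setminus O$ is closed, this is the image of a compact set under a continuous map, hence closed. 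The main obstacle is the transport step inside $(b)\Rightarrow(a)$: keeping the $\delta'$-thickening $D_{\delta'}[q]$ inside the merely $\approx$-saturated (not necessarily closed) set $W$ as it is carried along the Lascar chain from $q$ to $p$; everything else is routine compactness and point-set topology.
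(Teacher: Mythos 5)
Parts (a)$\Rightarrow$(b)$\Leftrightarrow$(c) and your treatment of (2) are fine (your projection argument for openness of $U$, using that $D$ is closed in the compact product, is essentially the paper's consistency argument in topological dress). The genuine gap is exactly where you admit it: the production of $\delta'$ in (b)$\Rightarrow$(a). You never actually define $\delta'$, and the mechanism you sketch cannot work: an automorphism fixing $M$ pointwise fixes every type over $M$, so it acts as the identity on $S_M(M)$ and cannot ``sweep'' $D_{\delta'}[q]$ into $D_\delta[p]$; likewise, a chain of $\Gamma$-steps from a realization of $q$ to a realization of $p$, together with the antichain bound of Remark \ref{remgaminv}(3), gives no recipe for a single formula $\delta'\in\Gamma$ whose witnesses over $q$ transfer to $\delta$-witnesses over $p$ (that antichain bound is the engine of Lemma \ref{intlem}, not of this step).

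The missing idea is to use one strong automorphism rather than a Lascar chain. Write $p=\tp(H/M)$, $q=\tp(K/M)$; since $q\approx p$, Remark \ref{remnulas} gives $f\in\operatorname{Autf}_{\bs e}(\CM)$ with $f(H)=K$. Because $\Gamma$ is $\bs e$-invariant, $\Gamma(x,y)\models\delta(x,y,f(a))$, so by finite character there is $\delta'(x,y,a)\in\Gamma$ with $\delta'(x,y,a)\models\delta(x,y,f(a))$. Now for $q'\in D_{\delta'}[q]$ pick $K'\models q'$ with $\models\delta'(K,K',a)$, hence $\models\delta(K,K',f(a))$; applying $f^{-1}$ gives $\models\delta(H,f^{-1}(K'),a)$, so $p':=\tp(f^{-1}(K')/M)\in D_\delta[p]\subseteq W$. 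Since $f\in\operatorname{Autf}_{\bs e}(\CM)$, $K'$ and $f^{-1}(K')$ have the same Lascar type over $\bs e$, i.e. $q'\approx p'$, and the $\approx$-saturation of $W$ yields $q'\in W$. Thus $D_{\delta'}[q]\subseteq W$, and Lemma \ref{intlem} gives $q\in\intr(D_{\delta'}[q])\subseteq\intr(W)=O$. Note this also shows your worry about $W$ being ``merely $\approx$-saturated, not closed'' is immaterial: one never needs $W$ closed, only that it absorbs $\approx$-classes of points of $D_\delta[p]$.
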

\begin{proof}
(1) By Remark \ref{drel}(2)(3)(4), we have  (a)$\Rightarrow$(b)$\Leftrightarrow$(c).  Assume (b) and (c). We will show (a). 
Let $p=\tp(H/M) $ and $D_{\delta}[p] \subseteq O$ for some $\delta(x,y,a) \in \Gamma$, and let $q=\tp(K/M)\approx p$.
We need to show $q \in O$. 

We first claim that there is $\delta'\in \Gamma$ such that $D_{\delta'} [q]\subseteq W$:
Notice that $H \equiv^{\LL}_{\bs{e}} K$ by Remark \ref{remnulas} and  there is $f \in \operatorname{Autf}_{\bs{e}}(\CM)$ such that $f(H) = K$.
Since $\Gamma$ is $\bs e$-invariant, we have $\Gamma\equiv f(\Gamma)\models \delta(x,y,f(a))$. Hence there is $\delta'(x,y,a)\in \Gamma$ such that
$\delta'(x,y,a)\models \delta(x,y,f(a))$. Let $q' \in D_{\delta'}[q]$. It suffices to see $q'\in W$.
Since there is $K' \models q'$ such that $\models \delta' (K,K',a)$, we have $\delta(K,K',f(a))$.
Let
$H' = f^{-1}(K')$ and $p' = \operatorname{tp}(H'/M)$.
Then $\delta(K,K', f(a))$ implies $\models \delta(H,H',a)$ and since $H \models p$, we have $p' \in D_{\delta}[p](\subseteq W)$.
Note that $K' \models q'$ and $f^{-1}(K') \models p'$ where $f \in \operatorname{Autf}_{\bs{e}}(\CM)$, so $p' \approx q'$. Since $W$ is closed under $\approx$,
we have  $q' \in W$ as wanted.

Thus by Lemma \ref{intlem}, $q\in \intr(D_{\delta'}[q])\subseteq O$.
\medskip

(2)  That $U=\{p\in S_M(M)\mid \nu^{-1}(\nu(p)) \subseteq O\}$ follows from (1). Thus obviously  $U=\nu^{-1}(\nu(U))$. Now
say $O = \bigcup_{i<\lambda}[\psi_i]$, so that $S_M(M) \setminus O = \{r \in S_M(M) : \{\neg \psi_i : i < \lambda\} \subseteq r\}$.
    Notice then that  $q\notin U(\subseteq O)$ iff $D[q]\not\subseteq O$
     iff $q(x) \wedge \Gamma(x,y) \wedge  \{\neg\psi_i(y)\}_{i<\lambda}$ is consistent.
    Hence  $S_M(M) \setminus U$ is closed.
\end{proof}

\begin{Corollary}\label{keylem}
Let $(p \in)D[p] \subseteq O \subseteq S_M(M)$ where $O$ is open.
Then $\nu(p) \in \operatorname{int}(\nu(O))$.
\end{Corollary}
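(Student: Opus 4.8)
The plan is to reduce the statement to Lemma \ref{prekeylem}, the only subtlety being that the hypothesis gives us an open set $O$ with $D[p]\subseteq O$, but $O$ need not be saturated under $\approx$ (i.e. we need not have $O=\nu^{-1}(\nu(O))$), which is what Lemma \ref{prekeylem} requires. So the first move is to replace $O$ by a suitable $\approx$-saturated open set containing it, apply the lemma there, and then push the resulting open set down through $\nu$.

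Concretely, I would set $W:=\nu^{-1}(\nu(O))$, which is automatically closed under $\approx$ and contains $O$, and put $O':=\intr(W)$. Since $O$ is open and $O\subseteq W$, we get $O\subseteq O'$; in particular $p\in D[p]\subseteq O\subseteq O'$. Now apply Lemma \ref{prekeylem} to this $W$ and $O'=\intr(W)$: let $U=\{q\in S_M(M)\mid D[q]\subseteq O'\}$. By part (2) of that lemma, $U$ is open in $S_M(M)$ and $U=\nu^{-1}(\nu(U))$. Because $D[p]\subseteq O'$, we have $p\in U$. Since $\nu^{-1}(\nu(U))=U$ is open, $\nu(U)$ is open in the quotient topology on $\gall(T,\bs e)$; and since $U\subseteq O'\subseteq W=\nu^{-1}(\nu(O))$ and $\nu$ is surjective, $\nu(U)\subseteq\nu(\nu^{-1}(\nu(O)))=\nu(O)$. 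Thus $\nu(p)\in\nu(U)\subseteq\nu(O)$ with $\nu(U)$ open, i.e. $\nu(p)\in\intr(\nu(O))$, as required.

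I do not expect a genuine obstacle here: once Lemma \ref{prekeylem} is in hand the argument is a short, formal manipulation of the quotient map $\nu$. The one point that must not be overlooked is precisely the passage from $O$ to $W=\nu^{-1}(\nu(O))$ and its interior $\intr(W)$ before invoking the lemma, since applying Lemma \ref{prekeylem} to $O$ itself is not legitimate; all the real content (the use of $\bs e$-indiscernibility, Ramsey, and $\bs e$-invariance of $\Gamma$) has already been absorbed into Lemmas \ref{intlem} and \ref{prekeylem}.
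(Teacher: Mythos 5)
Your proposal is correct and is essentially the paper's own proof: both set $W=\nu^{-1}(\nu(O))$, observe $O\subseteq\intr(W)$ with $W$ closed under $\approx$, apply Lemma \ref{prekeylem}(2) to the saturated open set $U=\{q\mid D[q]\subseteq\intr(W)\}$, and conclude $\nu(p)\in\nu(U)\subseteq\nu(O)$ with $\nu(U)$ open. No issues.
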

\begin{proof}
Put $W=\nu^{-1}(\nu(O))$, so that $W$ is closed under $\approx$ and $O\subseteq \intr(W)$.
As in Lemma \ref{prekeylem}, let $(p\in) U=\{q\in S_M(M)\mid D[q]\subseteq \intr(W)\}(\subseteq \intr(W))$.
Then by Lemma \ref{prekeylem}(2), $U$ is open and 
$\nu^{-1}(\nu(U))=U$. Thus $\nu(U)$ is open and $\nu(p)\in \nu(U)\subseteq \nu(W)=\nu(O)$.
\end{proof}


Now we are ready to show our goal.

\begin{Theorem}\label{thmmulcon}
The multiplication on $\operatorname{Gal}_{\operatorname{L}}(T,\bs{e})$ is continuous. 
\end{Theorem}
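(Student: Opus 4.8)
The plan is to reduce continuity of the multiplication map $\gall(T,\bs e)\times\gall(T,\bs e)\to\gall(T,\bs e)$ to a statement about the compact space $S_M(M)$ and then invoke Corollary \ref{keylem}. First I would lift the situation to the Stone space level. Using the map $r:S_{M^2}(M)\to S_M(M)\times S_M(M)$ from Remark/defn \ref{drel}(1), which is a closed surjection between compact spaces, and the multiplication-type map $m:S_{M^2}(M)\to S_M(M)$ sending $\tp(f(M)g(M)/M)$ to $\tp(fg(M)/M)$ (well-defined by Remark \ref{preli}(3), since $fg(M)$ is determined up to $\equiv_M$ by $\tp(f(M)g(M)/M)$), we get a commuting square relating $m$, $r$, $\nu$, and the multiplication on $\gall(T,\bs e)$. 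Concretely, $\nu\circ m = (\text{mult})\circ(\nu\times\nu)\circ r$. So to show multiplication is continuous it suffices, given a closed $C\subseteq\gall(T,\bs e)$, to show its preimage under multiplication is closed; equivalently, given an open $V\subseteq\gall(T,\bs e)$ and $\ov f,\ov g$ with $\ov f\,\ov g\in V$, to produce open neighbourhoods of $\ov f$ and $\ov g$ whose product lands in $V$.

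The key step is to transport the open set $V$ down to $S_M(M)$ and use the $D$-relation machinery. Set $W=\nu^{-1}(V)$, an open subset of $S_M(M)$ closed under $\approx$. Pick $M_1\models\tp(f(M)/M)$ and $M_2\models\tp(g(M)/M)$ arranged so that $fg(M)$ has type in $W$; then the pair $(\tp(M_1/M),\tp(M_2/M))$ has $m$-image (through a lift in $S_{M^2}(M)$) inside $W$. By continuity of $m$ (it is continuous, being induced by composition of automorphisms at the level of types over $M$ — this uses that $\tp(fg(M)/M)$ depends continuously on $\tp(f(M)g(M)/M)$, which is where a small argument is needed) and of $r$, there are basic open neighbourhoods $[\psi_1]\ni\tp(M_1/M)$ and $[\psi_2]\ni\tp(M_2/M)$ such that $r^{-1}([\psi_1]\times[\psi_2])$ is mapped by $m$ into $W$. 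Now I would enlarge these to be $\approx$-saturated: this is exactly what Corollary \ref{keylem} provides. Applying Corollary \ref{keylem} to each of $\tp(M_1/M)$ and $\tp(M_2/M)$ — after first shrinking $[\psi_i]$ if necessary so that $D[\tp(M_i/M)]\subseteq[\psi_i]$, using Lemma \ref{intlem} to guarantee $\tp(M_i/M)$ is still interior — we obtain open sets $U_i\subseteq S_M(M)$ with $U_i=\nu^{-1}(\nu(U_i))$, $\tp(M_i/M)\in U_i$, and $\nu(U_i)$ open in $\gall(T,\bs e)$; moreover $U_i\subseteq[\psi_i]$.

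Finally I would check that $\nu(U_1)\cdot\nu(U_2)\subseteq V$, which finishes the proof since $\nu(U_1),\nu(U_2)$ are open neighbourhoods of $\ov f,\ov g$. Given $\ov{f'}\in\nu(U_1)$ and $\ov{g'}\in\nu(U_2)$, choose representatives with $\tp(f'(M)/M)\in U_1$, $\tp(g'(M)/M)\in U_2$ (possible since $U_i$ is $\approx$-saturated, so every class meeting $U_i$ is contained in it). Then the pair lies in $[\psi_1]\times[\psi_2]$, so lifting to $S_{M^2}(M)$ and applying $m$ puts $\tp(f'g'(M)/M)\in W=\nu^{-1}(V)$, whence $\ov{f'g'}\in V$. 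I expect the main obstacle to be the bookkeeping around the auxiliary space $S_{M^2}(M)$ and verifying that the composition-induced map $m$ is genuinely continuous and that $r$ behaves well enough to pull back products of basic opens; once that is in place, Corollary \ref{keylem} does the essential topological work of manufacturing the $\approx$-invariant open neighbourhoods, exactly as in \cite{Z} but without extending the language.
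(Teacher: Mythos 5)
The skeleton you propose (lift to the Stone space, use the $D$-machinery and Corollary \ref{keylem} to manufacture $\approx$-saturated opens, push down by $\nu$) is the paper's strategy, but two essential steps are missing or wrong. First, your map $m$ is not well defined as a map into $S_M(M)$: from $\tp(f(M)g(M)/M)=\tp(f'(M)g'(M)/M)$, Remark \ref{preli}(3) only yields $\ov f=\ov{f'}$ and $\ov g=\ov{g'}$, hence $\ov{fg}=\ov{f'g'}$, i.e.\ $\tp(fg(M)/M)\approx\tp(f'g'(M)/M)$, not equality of these types (already in DLO one can take $g'=ug$ with $u\in\aut_M(\CM)$ so that $f(M)g(M)\equiv_M f(M)g'(M)$ while $fg(M)$ and $fg'(M)$ are separated by a point of $M$). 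So only $\nu\circ m$ makes sense, and its continuity --- that $\{\tp(f(M)g(M)/M)\mid \ov{fg}\in C\}$ is closed in $S_{M^2}(M)$ for closed $C\subseteq\gall(T,\bs e)$ --- is not ``a small argument'': it is the model-theoretic heart of the theorem, and it is exactly what the paper's Claim proves by exhibiting the type $\Psi(x,y)=\exists z(xz\equiv_{\bs e}My\wedge M\equiv_{\bs e}y\wedge\Phi(z))$ (with $\Phi$ type-defining $\nu^{-1}(C)$) and carrying out the two-way automorphism computation; your plan gives no indication of how this would be done. Moreover, to convert that continuity into product-form neighbourhoods $[\psi_1]\times[\psi_2]$ in $S_M(M)^2$ you need that $r$ is a quotient map (a closed continuous surjection onto the compact Hausdorff product), not ``continuity of $r$'', which points in the wrong direction.

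Second, the step ``after first shrinking $[\psi_i]$ if necessary so that $D[\tp(M_i/M)]\subseteq[\psi_i]$, using Lemma \ref{intlem}'' fails: shrinking an open neighbourhood of $p_i$ can never force it to contain the closed set $D[p_i]$, and Lemma \ref{intlem} gives the opposite inclusion $p\in\intr(D_\delta[p])$, which does not help. But the hypothesis $D[p]\subseteq O$ is indispensable for Corollary \ref{keylem}; without it you get neither the saturated open $U_i$ nor $\nu(p_i)\in\intr(\nu(U_i))$, and the endgame collapses (also, the containment $U_i\subseteq[\psi_i]$ you use at the very end is not what Corollary \ref{keylem} or Lemma \ref{prekeylem} provides). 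What actually produces open sets containing all of $D[p_0]$ and $D[p_1]$ whose product still multiplies into $V$ is the observation that $D[p_i]\subseteq\nu^{-1}(\ov{f_i})$ (Remark \ref{drel}(3)), so that $D[p_0]\times D[p_1]$ is a compact set disjoint from the closed, $\approx$-saturated set $B_0$, together with a compactness (tube-lemma) separation in $S_M(M)^2$; this is precisely the step the paper performs and your proposal omits, and once it is in place the $[\psi_i]$ bookkeeping becomes unnecessary.
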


\begin{proof}
Denote $(G,.)=\gall(T,\bs{e})$. Let $C$ be a closed subset of $G$, say $\nu^{-1}(C)$ is type-defined over $M$ by $\Phi(z)$.
We want to show
 $$B:=.^{-1}(C)=\{(\ov f, \ov g)\mid \ov{fg}\in C\}$$ is closed in $G^2$.
Now let 
$$B_0=\{(p,q)\in S_M(M)\times S_M(M)\mid (\nu(p),\nu(q))\in B\}.$$
\begin{Claim}$ $\be
\item$B_0$ is $\approx$-closed, i.e. if $(p_0,p_1)\in B_0$ and $q_i\approx p_i$ $(i=0,1)$ then $(q_0,q_1)\in B_0$. 
\item
$B_0$ is closed in $S_M(M)^2$.
\ee
\end{Claim}
\begin{proof}[Proof of Claim]
(1) is clear by the definition of $B_0$.

(2) Consider  a type over $M$
$$\Psi(x,y)=\exists z(xz\equiv_{\bs e} My \wedge M\equiv_{\bs e} y \wedge\Phi(z) ).$$
Let $B_1\subseteq S_{M^2}(M)$ be type-defined by $\Psi$, i.e. 
$B_1=\{p(x,y)\in S_{M^2}(M)\mid \Psi(x,y)\subseteq p(x,y)\}$, which is closed in $S_{M^2}(M)$.

Due to Remark \ref{drel}(1), it suffices to show that $B_0=r(B_1)$: 
Let
$\tp(M_0M_1/M)\in B_1$. Hence  $M \equiv_{\bs e} M_1$ and there is $M_2$ such that 
$M_0M_2 \equiv_{\bs e} MM_1$ and $\tp(M_2/M)\in \nu^{-1}(C)$. 
Thus there are $f,g\in \aute(\CM)$ such that
$M_1=g(M)$ and $M_0M_2=f(MM_1)$. Then $fg(M)=f(M_1)=M_2$. Therefore   $\ov{fg}\in C$, and $(\tp(M_0/M),\tp(M_1/M))=(\tp(f(M)/M),\tp(g(M)/M))\in B_0$ as wanted.

For  the converse,
let $(\tp(f(M)/M),\tp(g(M)/M)) \in B_0$, so that $(\overline{f},\overline{g}) \in B$ by definition of $B_0$. Hence $\overline{fg} \in C$.
Now $f(M)fg(M) \equiv_{\bs{e}} Mg(M) \wedge M \equiv_{\bs{e}} g(M) \wedge \Phi(fg(M))$, and $\tp(f(M)g(M)/M)\in B_1$.
\end{proof}

We are ready to show $B$ is closed in $G^2$. Let $(\ov{f_0},\ov{f_1})\in G^2\setminus B$, and let
$W=\{(p,q)\in S_M(M)^2 \mid \nu(p)=\ov{f_0},\ \nu(q)=\ov{f_1}\}$. Hence $B_0\cap W=\emptyset$ and 
again $W$ is $\approx$-closed.  Fix $(p_0,p_1)\in W$. Now we have  $D[p_0]\times D[p_1]\subseteq W$  (Remark \ref{drel}(3)). 
Since both $B_0$ and $D[p_0]\times D[p_1]$ are closed so compact in $ S_M(M)^2$, by  purely topological arguments, there are open neighborhoods $U_i(\subseteq S_M(M))$ of $D[p_i]$ ($i=0,1$)  such that $B_0\cap (U_0 \times U_1)  = \emptyset$. Hence by Lemma \ref{keylem},
$\nu(p_i)=\ov{f_i}\in O_i:=\intr(\nu(U_i))\subseteq G$, so $(\ov{f_0},\ov{f_1})\in O_0\times O_1$.
Now since $B_0\cap (U_0 \times U_1)  = \emptyset$, by Claim (1)
 $B_0\cap (\nu^{-1}(\nu(U_0))\times  \nu^{-1}(\nu(U_1)))=\emptyset$ as well. Hence we must have
$B \cap (O_0\times O_1)=\emptyset$.  
Therefore $B$ is closed in $G^2$.
\end{proof}

\begin{Corollary}\label{corlastopgp}
The Lascar group over a hyperimaginary $\bs{e}$, $\gall(T,\bs{e})$ is a quasi-compact  topological group.
\end{Corollary}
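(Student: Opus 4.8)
The plan is to assemble the three facts already established: that the underlying space of $\gall(T,\bs e)$ is quasi-compact, that the inverse map is continuous, and that the multiplication map is continuous. Recall that by the definition adopted here, a topological group is a group carrying a topology for which both $(x,y)\mapsto xy$ and $x\mapsto x^{-1}$ are continuous; no separation axiom is imposed (and indeed $\gall(T,\bs e)$ need not be Hausdorff, only quasi-compact).

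For quasi-compactness, fix the small model $M\supseteq a$ as above and work with $S_M(M)$. This space is a closed subspace of the Stone space $S_x(M)$ — it is cut out by the partial type $r'(x,M)$ describing $\tp_x(M/\bs e)$ — hence it is compact, in particular quasi-compact. Since $\nu\colon S_M(M)\to\gall(T,\bs e)$ is a continuous surjection and $\gall(T,\bs e)$ carries the quotient topology with respect to $\nu$, the group $\gall(T,\bs e)$ is the continuous image of a quasi-compact space, so it is quasi-compact; by Proposition \ref{indepmn} this is independent of the choice of $M$ (and $N$).

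Continuity of the inverse operation is Proposition \ref{invcts}, and continuity of the multiplication is Theorem \ref{thmmulcon}. Combining these two statements with the quasi-compactness just noted yields that $\gall(T,\bs e)$ is a quasi-compact topological group, which is exactly the assertion. I do not expect any genuine obstacle at this last step: the substantive work was the proof of Theorem \ref{thmmulcon}, which relied on Corollary \ref{keylem} and Lemma \ref{prekeylem} (the hyperimaginary analogue of the argument of \cite{Z}, carried out directly over $\bs e$ without extending the language); once that is in place, the present corollary is a formality.
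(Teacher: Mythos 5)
Your proposal is correct and follows exactly the paper's route: the paper's proof of Corollary \ref{corlastopgp} simply cites Proposition \ref{invcts} and Theorem \ref{thmmulcon}, with quasi-compactness having been noted earlier from the continuous surjection $\nu\colon S_M(M)\to\gall(T,\bs e)$ and the quotient topology, which is precisely the argument you spell out.
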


\begin{proof}
By Proposition \ref{invcts} and Theorem \ref{thmmulcon}.
\end{proof}

\section{Type-definability of orbit equivalence relations}

Given a subgroup $H\leq \aut_{\bs e}(\CM)$, we write $x\equiv^H_{\bs e} y$ for $|x|=|y|=\alpha$ to denote the orbit equivalence relation 
 on $\CM^\alpha$ under $H$, that is for $b,c\in \CM^\alpha$, $b\equiv^H_{\bs e} c$ iff there is $h\in H$ such that $c=h(b)$. More generally if $F$ is an $\emptyset$-type-definable
 equivalence relation on $\CM^\alpha$, then $x_F\equiv^H_{\bs e}y_F$ is the orbit equivalence relation on $\CM^\alpha/F$ under $H$, which is coarser than $F$.

In this section we aim to show that 
given closed $H'\leq \gall(T,\bs e)$ and $H=\pi^{-1}(H')\leq \aut_{\bs e}(\CM)$,  
$x_F\equiv^H_{\bs e} y_F$ is a type-definable bounded equivalence relation; and in addition if 
$H'\unlhd \gall(T,\bs e)$ then $x_F\equiv^H_{\bs e} y_F$ is $\bs e$-invariant  (Corollary \ref{maininthesection}). 
This result is claimed to be proved in \cite{K1} (restated in \cite[Lemma 5.1.6(1)]{K}) but the proof contains an error: There for each complete hyperimaginary type $p$ over a hyperimaginary, $\Psi_p(x,y)$  type-defines  a bounded equivalence relation on the solution set of $p$, but the description after this there need not work to extend $\Psi_p(x,y)$ to the whole monster model keeping it type-definable and bounded.

 When  $\bs e=\emptyset$ and 
$H'=\ov{\{\id\}} \unlhd \gall(T)$, a correct proof   using an ultraproduct argument   is given in \cite[Lemma 4.18]{LP}. 
Here we supply a direct proof (not appealing to the ultraproduct method) of the general case result while we  still utilize ideas in \cite{LP}. 

\medskip

We recall the following folklore  first.

\begin{Fact} \label{fklorefct}
Let $F$ be a type-definable equivalence relation on $\CM^\alpha$ which is $\bs e$-invariant.  Then  there is an  $\emptyset$-type-definable equivalence relation
$F'$ such that for any $b\in \CM^\alpha$, 
$b_F$ and $ba/F'$ are interdefinable over $\bs e$.
\end{Fact}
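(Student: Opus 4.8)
The plan is to construct $F'$ directly from $F$ by ``adding the data of $a/E$'' to the equivalence relation in a $\emptyset$-type-definable way. Since $F$ is $\bs e$-invariant and type-definable, by the remark in Section~1 it is $\bs e$-type-definable; fix a partial type $\rho(x_0,x_1,a)$ over $a$ defining $F$ (here $x_0,x_1$ range over $\CM^\alpha$), where we may take each formula in $\rho$ to be reflexive and symmetric and $\rho$ closed under finite conjunction. Working with pairs $x=(x_0,z_0)$, $y=(x_1,z_1)$ where $z_0,z_1$ have the length of $a$, I would set
\[
F'(x,y)\ :=\ E(z_0,z_1)\ \wedge\ \exists z\,\bigl(E(z,z_0)\wedge E(z,z_1)\wedge \rho(x_0,x_1,z)\bigr).
\]
The first task is to check $F'$ is an $\emptyset$-type-definable equivalence relation: it is a partial type over $\emptyset$ by construction; reflexivity is clear; symmetry follows from symmetry of $E$ and of each formula in $\rho$; transitivity uses transitivity of $E$ together with the fact that, once $z_0,z_1,z_2$ are all $E$-equivalent, $\rho(-,-,z)$ for a common representative $z$ is a genuine equivalence relation, so the $\rho$-parts compose. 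Here the $\exists z$ with the three $E$-conjuncts is exactly what launders away the dependence on which representative of $\bs e$ is used, which is the technical heart of making $F'$ work globally (this is the point where the argument in \cite{K1} went wrong).

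Next I would verify the interdefinability claim. Fix $b\in\CM^\alpha$ and an automorphism $f\in\aut(\CM)$. If $f(\bs e)=\bs e$ and $f(b_F)=b_F$, I must show $f$ fixes the class $ba/F'$, i.e. $F'(f(b)f(a),ba)$; since $f(a)Ea$ (as $f(\bs e)=\bs e$) and $f(b)\mathrel{F}b$ over $\bs e$ (witnessed using the representative $a$, or any $E$-equivalent one), taking $z=a$ witnesses $F'$. Conversely, if $f$ fixes $ba/F'$, then from the $E$-conjunct we get $f(a)Ea$, hence $f(\bs e)=\bs e$; and from the $\rho$-part with witness $z$ we get $\rho(f(b),b,z)$ with $zEa$, which says precisely $f(b)\mathrel{F}b$ (since $F$ is defined by $\rho(-,-,a')$ for any representative $a'$ of $\bs e$), i.e. $f$ fixes $b_F$. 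Thus an automorphism fixes $b_F$ (necessarily over $\bs e$, as $b_F$ determines $\bs e\in\bdd(b_F)$... more precisely: fixing $b_F$ forces fixing $\bs e$ since $F$ is only $\bs e$-invariant — one should note $b_F$ and $\bs e$ need not be interdefinable, so strictly the statement is ``interdefinable over $\bs e$'', meaning we compare $\aut_{\bs e}$-orbits, which is exactly what the displayed biconditional above establishes once restricted to $f\in\aut_{\bs e}(\CM)$).

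The step I expect to be the main obstacle is transitivity of $F'$, specifically handling the existential witnesses: given $F'(x,y)$ with witness $z$ and $F'(y,y')$ with witness $z'$, one has $zEz_1Ez'$ so $z,z'$ represent the same hyperimaginary, but $\rho(x_0,x_1,z)$ and $\rho(x_1,x_1',z')$ use different representatives; to compose them into $\rho(x_0,x_1',z'')$ for a single $z''$ one must invoke that $F$, being an equivalence relation, satisfies $\rho(u,v,a')\wedge\rho(v,w,a')\models\rho(u,w,a')$ for each (or a refined) formula, uniformly as $a'$ ranges over the $E$-class — this uniformity is what needs care, and it is available because the type $\rho$ over $a$ can be chosen so that this holds (replacing $\rho$ by the equivalent type asserting ``$x_0$ and $x_1$ lie in the same $F$-class'', which is automatically transitive), after which picking $z''=z'$ (legitimate since $zEz'$) does the job. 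A secondary routine point is that $F'$ is an equivalence relation on all of $\CM^{\alpha}\times\CM^{|a|}$, not just on the locus $E(z_0,z_1)$ — but outside that locus $F'$ simply has no instances beyond the diagonal, which is fine for an equivalence relation, or one can symmetrize trivially; I would remark that in practice one only cares about $F'$ on the $\emptyset$-type-definable set $\{(x_0,z_0): \exists z_1\, E(z_0,z_1)\}=\CM^\alpha\times\CM^{|a|}$, so no issue arises.
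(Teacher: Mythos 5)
Your construction follows the same basic idea as the paper's proof (pair the tuple with a copy of $a$ and let $F'$ carry $E$ together with a parametrized form of $F$), but there is a genuine gap at exactly the point you call the technical heart. You assert that $F$ ``is defined by $\rho(-,-,a')$ for any representative $a'$ of $\bs e$'', and your $F'$ quantifies $\exists z$ over arbitrary tuples $E$-equivalent to $z_0,z_1$, which are themselves unrestricted. But $\bs e$-invariance of $F$ only yields that $\rho(-,-,z)$ defines $F$ for $z$ in the $\aut_{\bs e}(\CM)$-orbit of $a$, i.e.\ for $z$ with $E(z,a)$ which \emph{moreover realize} $\tp(a)$; for other $z$ in the class of $a$ (and for $z$ in other $E$-classes, which your $F'$ must also handle) the instance $\rho(x_0,x_1,z)$ is completely uncontrolled. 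Concretely, let $T$ be the theory of an infinite set with a unary predicate $P$ (both $P$ and $\neg P$ infinite), $E$ the always-true relation, $a\in P$, $F$ equality on singletons, and $\rho(x_0,x_1,z):=\{(x_0=x_1)\vee(\neg P(z)\wedge P(x_0)\wedge P(x_1))\}$: this is reflexive, symmetric, and defines $F$ at $z=a$, yet your $F'$ becomes $(x_0=x_1)\vee(P(x_0)\wedge P(x_1))$, so for $b\in P$ every automorphism fixes $ba/F'$ while many move $b_F$; interdefinability over $\bs e$ fails. (Replacing the second disjunct by $\neg P(z)\wedge(P(x_0)\leftrightarrow\neg P(x_1))$ instead destroys transitivity of $F'$.) Your suggested repair for transitivity --- passing to ``the type asserting $x_0,x_1$ lie in the same $F$-class'' --- does not help: as a type in the parameter variable $z$ it is again just some $\rho$ with the same defect, and transitivity uniformly in arbitrary substitutions for $z$ cannot in general be arranged (transitive closures of type-definable relations need not be type-definable).

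The missing ingredients are precisely to restrict the parameter coordinates to realizations of $p(z):=\tp(a)$ and to add a diagonal disjunct so that $F'$ remains reflexive (hence an equivalence relation) on all of $\CM^{\alpha}\times\CM^{|a|}$. This is what the paper does: writing the $a$-type defining $F$ as $F(x,y;a)$, it sets $F'(xz,yw)\equiv(F(x,y;z)\wedge E(z,w)\wedge p(z)\wedge p(w))\vee xz=yw$. Once $p(z)\wedge p(w)\wedge E(z,w)$ is present, any two admissible parameters are conjugate by an automorphism fixing the hyperimaginary $z/E$, so $F(-,-;z)$ and $F(-,-;w)$ define the same $z/E$-invariant equivalence relation; from this, symmetry, transitivity and both directions of the interdefinability check go through, and your existential witness is no longer needed.
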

\begin{proof} Since $F$ is $\bs e(=a/E)$-invariant, $F$ is type-definable over $a$ by say $F(x,y;a)$. Then for $p(x)=\tp(a)$, we put 
$$F'(xz,yw)\equiv (F(x,y;z)\wedge E(z,w)\wedge p(z)\wedge p(w))\vee xz=yw.$$
It is not hard to check that $F'$ satisfies the statement. 
\end{proof}

The equivalence of (1) and (3) of the following proposition is proved   in \cite[Lemma 1.9]{LP}  when $\bs e=\emptyset$. 
The argument is essentially the same, but  modifications are made to handle the hyperimaginary parameter.

\begin{Proposition}\label{PropHbbFTFAE}
Let $b$ be any small tuple in $\CM$, and let $H \leq \aut_{\bs e}(\CM)$.
The following are equivalent.
\begin{enumerate}
    \item $H = \operatorname{Aut}_{b_F\bs{e}}(\CM)$ for some $\bs{e}$-invariant type-definable equivalence relation $F$.
   
    \item  $H = \operatorname{Aut}_{(ba/L) \bs{e}}(\CM)$ for some  $\emptyset$-type-definable equivalence relation $L$
    such that $ba/L\in\dcl(b\bs e)$.
    
     \item $\operatorname{Aut}_{b\bs{e}}(\CM) \leq H $ and the orbit of $b$ under $H$ is type-definable.
\end{enumerate}
\end{Proposition}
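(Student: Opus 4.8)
The plan is to prove the cyclic chain of implications $(1)\Rightarrow(2)\Rightarrow(3)\Rightarrow(1)$, modeling the argument on \cite[Lemma 1.9]{LP} but threading the hyperimaginary parameter $\bs e=a/E$ through each step.

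For $(1)\Rightarrow(2)$: given an $\bs e$-invariant type-definable equivalence relation $F$ with $H=\aut_{b_F\bs e}(\CM)$, I would first apply Fact \ref{fklorefct} to obtain an $\emptyset$-type-definable equivalence relation $F'$ such that $b_F$ and $ba/F'$ are interdefinable over $\bs e$. Interdefinability over $\bs e$ means precisely that an automorphism fixing $\bs e$ fixes $b_F$ iff it fixes $ba/F'$, so $\aut_{b_F\bs e}(\CM)=\aut_{(ba/F')\bs e}(\CM)$; taking $L=F'$ gives (2). The condition $ba/L\in\dcl(b\bs e)$ holds because any automorphism fixing $b$ and $\bs e$ fixes $b$, hence $a/E=\bs e$, hence (by the shape of $F'$ in Fact \ref{fklorefct}, which only mixes the $F$-class of $b$ with the $E$-class of $a$) fixes $ba/F'$.

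For $(2)\Rightarrow(3)$: from $H=\aut_{(ba/L)\bs e}(\CM)$ with $ba/L\in\dcl(b\bs e)$, the containment $\aut_{b\bs e}(\CM)\leq H$ is immediate since any automorphism fixing $b$ and $\bs e$ fixes $ba/L$ by the $\dcl$ hypothesis. For type-definability of the orbit of $b$ under $H$: a conjugate $b'$ of $b$ lies in the $H$-orbit iff there is an automorphism moving $b$ to $b'$ and fixing $ba/L$ and $\bs e$, which one translates into the condition that $\tp(b/\emptyset)$ holds of $b'$ together with $\exists y\,(b'yL b a \wedge E(y,a))$ — i.e., the orbit is cut out by $\tp_x(b/\emptyset)$ conjoined with a partial type over $a$ asserting $xa/L = ba/L$ (using that $ba/L$ is a fixed hyperimaginary). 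This is type-definable over the real tuple $ba$.

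For $(3)\Rightarrow(1)$, which I expect to be the main obstacle: assume $\aut_{b\bs e}(\CM)\leq H$ and the orbit $O$ of $b$ under $H$ is type-definable, say by $\Sigma(x)$ over a small real set which we may absorb into $a$ (enlarging $a$ without changing $\bs e$, or rather replacing $a$ by $aa'$; care is needed here since $\bs e$ is fixed, so instead I would note $\Sigma$ is type-definable over some real $d$ and work with $ad$). The goal is to manufacture an $\bs e$-invariant type-definable $F$ with $\aut_{b_F\bs e}(\CM)=H$. The natural candidate: declare $x F y$ iff $x$ and $y$ have the same $\aut_{\bs e}(\CM)$-type and the "$H$-orbit map" sends them consistently — concretely, following \cite{LP}, one sets $xFy$ iff for every (equivalently, some) $g\in\aut_{\bs e}(\CM)$ with $g(x)=y$ we have $g\in H$, but this must be rendered type-definable. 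The key point from \cite{LP} is that because $O$ is type-definable and $H$ is the stabilizer-type preimage, the relation "$x,y$ are in the same $H$-orbit within the $\aut_{\bs e}$-orbit of $b$" is already type-definable on the $\aut_{\bs e}(\CM)$-orbit of $b$, and one extends it by the identity off that orbit (as in the $xz=yw$ disjunct of Fact \ref{fklorefct}). One then checks: $F$ is $\bs e$-invariant (because $O$ and $H$ are, and $H\supseteq\aut_{b\bs e}(\CM)$ forces the $b$-orbit to be a union of $F$-classes in a way compatible with $\bs e$-automorphisms); $F$ is an equivalence relation (symmetry and transitivity from $H$ being a group); and finally $\aut_{b_F\bs e}(\CM)=H$ — the inclusion $\supseteq$ because $H$ preserves $O=b_F$ and fixes $\bs e$, and $\subseteq$ because an $\bs e$-automorphism fixing the class $b_F$ maps $b$ into its own $H$-orbit, hence equals (an element of $H$ composed with) something in $\aut_{b\bs e}(\CM)\leq H$, so lies in $H$. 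The delicate part is verifying that the extended-by-identity relation is genuinely transitive and genuinely type-definable when the ambient parameter is a hyperimaginary rather than a real set, i.e.\ that no real representative of $\bs e$ leaks in uncontrollably; this is exactly where the modifications over \cite{LP} are required, and I would handle it by the same device used in Fact \ref{fklorefct}, writing everything in terms of the real tuple $a$ with the extra conjunct $E(z,w)\wedge p(z)\wedge p(w)$ guaranteeing $\bs e$-invariance after quotienting.
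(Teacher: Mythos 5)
Your overall architecture (Fact \ref{fklorefct} for (1)$\Rightarrow$(2), a direct type for the orbit in (2)$\Rightarrow$(3), and a manufactured equivalence relation for (3)$\Rightarrow$(1)) matches the paper, and (1)$\Rightarrow$(2) is fine. But there are two problems. The smaller one is in (2)$\Rightarrow$(3): the type you write down for the orbit, $\tp_x(b/\emptyset)$ together with $\exists y(L(xy,ba)\wedge E(y,a))$, is too weak. Membership of $b'$ in the $H$-orbit requires an $\bs e$-automorphism carrying the \emph{pair} $ba$ to $b'y$, so the correct type is $\exists y(L(xy,ba)\wedge xy\equiv_{\bs e} ba)$ as in the paper; with your version, taking $L$ to be the total relation (so $H=\aut_{\bs e}(\CM)$) your set is all realizations of $\tp(b/\emptyset)$, whereas the orbit is the realizations of $\tp(b/\bs e)$. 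This is fixable, but as written it defines the wrong set.

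The genuine gap is in (3)$\Rightarrow$(1), which you yourself flag as the main obstacle but then resolve with the wrong relation. Your candidate ``$xFy$ iff for every (equivalently, some) $g\in\aut_{\bs e}(\CM)$ with $g(x)=y$ we have $g\in H$'' is the plain $H$-orbit relation: the ``for every $\Leftrightarrow$ for some'' claim needs $\aut_{x\bs e}(\CM)\leq H$, which you only know for $x$ in the $H$-orbit of $b$ (for other $x$ in the $\aut_{\bs e}(\CM)$-orbit it is a conjugate $f\aut_{b\bs e}(\CM)f^{-1}$, not contained in $H$ in general); the $H$-orbit partition of the $\aut_{\bs e}(\CM)$-orbit of $b$ is $\bs e$-invariant only under a normality-type condition on $H$ that is not among the hypotheses (``$F$ is $\bs e$-invariant because $O$ and $H$ are'' conflates $H\leq\aut_{\bs e}(\CM)$ with conjugation-invariance); and its type-definability away from the class of $b$ is exactly what hypothesis (3) does not give you, since (3) only makes the single orbit $X$ of $b$ type-definable. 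The paper's construction is different in substance: because $\aut_{b\bs e}(\CM)\leq H$ and $X$ is $H$-invariant, $X$ is type-defined by a $b\bs e$-invariant type $\Phi(x,b,a)$, so that $\Phi(x,b,a'')$ still defines $X$ whenever $ba''\equiv_{\bs e}ba$; one then sets $F^*(x,y)\equiv\exists z(\Phi(x,y,z)\wedge\tp_x(b/\bs e)\wedge\tp_{yz}(ba/\bs e))$, whose class at $y=f(b)$ is $f(X)$, i.e.\ the $fHf^{-1}$-orbit rather than the $H$-orbit. The real work — which you dismiss as ``symmetry and transitivity from $H$ being a group'' — is precisely the verification that $F^*$ is an equivalence relation, carried out by repeatedly invoking the invariance property of $\Phi$ and $\aut_{b\bs e}(\CM)\leq H$; only then do the two inclusions $H=\aut_{b_F\bs e}(\CM)$ (whose sketch at the end of your proposal is correct in spirit) go through. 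Without identifying this relation and proving that claim, the implication (3)$\Rightarrow$(1) is not established.
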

\begin{proof}
Let $X$ be the orbit of $b$ under $H$.

(1) $\Rightarrow$ (2): This follows from Fact \ref{fklorefct}.

\medskip

(2) $\Rightarrow$ (3): Since  $ba/L\in\dcl(b\bs e)$, we have 
 $\operatorname{Aut}_{b\bs{e}}(\CM) \leq H $. Moreover $X$ is type-defined by
 $\exists y (L(xy, ba)\wedge xy\equiv_{\bs e} ba).$

\medskip

(3) $\Rightarrow$ (1): Since $X$ is invariant under $H$, it is also invariant under $\operatorname{Aut}_{b\bs{e}}(\CM)$, so type-definable by some $b\bs{e}$-invariant partial type $\Phi(x,b,a)$  \ (*) where $\Phi(x,y,z) \subseteq \CL$.
We define 
$$F^*(x,y) \equiv \exists z(\Phi(x,y,z) \wedge \tp_x(b/\bs{e}) \wedge \tp_{yz}(ba/\bs{e})).$$

\begin{Claim}
$F^*(x,y)$ is an equivalence relation on the solution set of $\tp(b/\bs{e})$.
\end{Claim}

\begin{proof}[Proof of Claim]
Let  $d \equiv_{\bs e} b$ be given, hence $d= f(b)$ for some $f \in \aute(\CM)$.
Then $\vDash \Phi(b,b,a)$ implies $\vDash \Phi(d,d,f(a))$ and so $F^*(d,d)$ holds.

To show symmetry, assume $F^*(c,d)$ holds for some  $c,d\models \tp(b/\bs e)$.  Thus there is
$a'$ such that $da'\equiv_{\bs e} ba$ and $\Phi(c,d,a')$ holds. Hence there also is 
 $c'$ such that 
$cd\equiv_{\bs e} c'b$,  and for some $g\in \aut_{\bs e}(\CM)$  we have 
$g(cda')=c'ba''$ and $\Phi(c',b,a'')$ holds.   Note that $ba\equiv_{\bs e} b a''$ and as said in (*) above,  $\Phi(x,b,a'')$ also type-defines $X$. Hence $c'\in X$ and  there is $f\in H$ such that  $c'=f(b)$. Note that  
$f^{-1}(b)\in X$ as well and $\Phi(f^{-1}(b),b,a'')$ holds. Hence by applying $g^{-1}.f\in \aut_{\bs e}(\CM)$ we have 
$\Phi(d, c, a''')$ where $a'''= g^{-1}.f(a'')$ and $ca'''\equiv_{\bs e} ba$.  Therefore we have $F^*(d,c)$.

For transitivity, assume $F^*(u,v)$ and $F^*(v,w)$. Hence there are $a_0,a_1$ such that $va_0\equiv_{\bs e} ba\equiv_{\bs e}wa_1$ and 
$\Phi(u,v,a_0)$, $\Phi(v,w,a_1)$ hold. Now there is $u'v'a_2$ such that $u'v'a_2ba\equiv_{\bs e} uva_0wa_1$.
Hence $\Phi(u',v', a_2)$ and $\Phi(v',b,a)$ hold, and $v'\in X$. Thus there is $h\in H$ such that $h(b)=v'$, and we have
$\Phi(u'', b, a_3)$ where $u''=h^{-1}(u')$, $a_3= h^{-1}(a_2)$. Notice that $ba_3\equiv_{\bs e} v'a_2\equiv_{\bs e} ba$, and hence again by (*), we have
$u''\in X$. Therefore there is $k\in H$ such that $k(b)=u''=h^{-1}(u')$, and $u'=h.k(b)\in X$. Hence  $\Phi(u',b,a)$, and  $\Phi(u,w,a_1)$ hold.
Thus $F^*(u,w)$ follows because $ba\equiv_{\bs e} wa_1$.
\end{proof}

Define $F(x,y) \equiv F^*(x,y) \vee x=y$, so that $F$ is a type-definable  equivalence relation on $\CM^{|x|}$.
Notice that $F$ is $\bs{e}$-invariant. It remains to show 
$H = \operatorname{Aut}_{b_F\bs{e}}(\CM).$

If $h \in H$, then   $h(b)\in X$ and  $\Phi(h(b),b,a)$ holds. Thus $F(h(b),b)$.
Conversely, assume $h \in \operatorname{Aut}_{b_F\bs{e}}(\CM)$.
Then $F(h(b),b)$, so $F^*(h(b),b)$ and $\Phi(h(b), b,a')$ hold where $ba'\equiv_{\bs e}ba$.
Hence again by (*), we have $h(b)\in X$ and there is $g\in H$ such that $h(b)=g(b)$.
Then $g^{-1}.h\in \aut_{b\bs e}(\CM)\leq H$ since $g^{-1}.h(b)=b$. Therefore 
$h\in H$.
\end{proof}


\begin{Proposition}\label{PropHcld}
Let $H\leq \aut_{\bs e}(\CM)$ and let $H'=\pi(H)\leq \gall(T,\bs e)$.
Then   $H'$ is closed in $\gall(T,\bs{e})$
and $H=\pi^{-1}(H')$, if and only  if $H=\Aut_{\bs{e}'\bs{e}}(\CM)$ for some hyperimaginary $\bs{e}' \in \bdd({\bs{e}})$. 
\end{Proposition}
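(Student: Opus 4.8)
The plan is to prove the two implications separately, in both cases passing through the relative Stone space $S_M(M)$, the quotient map $\nu\colon S_M(M)\to\gall(T,\bs e)$, and the map $\mu$ with $\pi=\nu\circ\mu$. The two facts I will lean on most are that over a (small) model $\bdd$ coincides with $\dcl$ (Remark \ref{preli}(1)) and Proposition \ref{PropHbbFTFAE}.

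For ($\Leftarrow$), assume $H=\Aut_{\bs e'\bs e}(\CM)$ with $\bs e'=d_G\in\bdd(\bs e)$, where $d$ is a real tuple and $G$ an $\emptyset$-type-definable equivalence relation. First I would observe that if $M'$ is any model with $\bs e\in\dcl(M')$, then $\bs e'\in\bdd(\bs e)\subseteq\bdd(M')=\dcl(M')$ by Remark \ref{preli}(1), so every automorphism fixing $M'$ pointwise fixes $\bs e'$; hence $\autf_{\bs e}(\CM)\le H$, and therefore $H=\pi^{-1}(\pi(H))=\pi^{-1}(H')$. For closedness of $H'$, by Proposition \ref{indepmn} I may take $M$ to contain the representative $d$; then for $f\in\Aut_{\bs e}(\CM)$ the condition $f\in H$ is simply $\models G(f(d),d)$, which is a condition on $\tp(f(M)/M)$ since $d\subseteq M$. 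A short check then gives $\nu^{-1}(H')=\{p\in S_M(M):G(x_d,d)\subseteq p\}$, where $x_d$ is the sub-tuple of the variable $x$ lying over $d$; this set is closed, and since $\gall(T,\bs e)$ carries the quotient topology with respect to $\nu$, it follows that $H'$ is closed.

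For ($\Rightarrow$), assume $H'$ is closed and $H=\pi^{-1}(H')$. Then $\ker\pi=\autf_{\bs e}(\CM)\le H$, so in particular $\Aut_{M\bs e}(\CM)=\Aut_M(\CM)\le H$. The crucial step is to show that the orbit $X$ of (an enumeration of) $M$ under $H$ is type-definable over $M$: because $H=\pi^{-1}(H')$ one has $X=\{M'\in\CM^{|M|}:\tp(M'/M)\in\nu^{-1}(H')\}$ — here one uses $H=\pi^{-1}(H')$ to push an automorphism witnessing $\tp(M'/M)\in S_M(M)$ back into $H$ — and $\nu^{-1}(H')$ is closed in $S_M(M)$, hence in $S_x(M)$, by continuity of $\nu$, so it is the solution set of a partial type over $M$; therefore so is $X$. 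Now Proposition \ref{PropHbbFTFAE} (applied with $b$ this enumeration of $M$, noting $\Aut_{M\bs e}(\CM)\le H$) produces an $\bs e$-invariant type-definable equivalence relation $F$ with $H=\Aut_{M_F\bs e}(\CM)$. Finally $M_F\in\bdd(\bs e)$, since its $\Aut_{\bs e}(\CM)$-orbit has size $[\Aut_{\bs e}(\CM):H]=[\gall(T,\bs e):H']$, which is bounded by Corollary \ref{corgalsize}; so $\bs e':=M_F$ is as required.

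I expect the main obstacle to be the verification in ($\Rightarrow$) that $X=\{M':\tp(M'/M)\in\nu^{-1}(H')\}$ is genuinely type-definable over $M$, which is precisely the hypothesis needed to invoke Proposition \ref{PropHbbFTFAE}; the delicate point there is that membership of $\tp(M'/M)$ in $\nu^{-1}(H')$ must be translated into the existence of a connecting automorphism actually lying in $H$, which is exactly where $H=\pi^{-1}(H')$ is used. By contrast, the ($\Leftarrow$) direction is comparatively mechanical once Proposition \ref{indepmn} allows a representative of $\bs e'$ to be absorbed into $M$.
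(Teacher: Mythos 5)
Your proof is correct and follows essentially the same route as the paper: the ($\Rightarrow$) direction via type-definability of the $H$-orbit of $M$, Proposition \ref{PropHbbFTFAE}, and the boundedness of the orbit of $M_F$ via Corollary \ref{corgalsize}, and the ($\Leftarrow$) direction by identifying $\nu^{-1}(H')$ with a closed set of types in $S_M(M)$ after absorbing a representative of $\bs e'$ into $M$. The only harmless local difference is that to obtain $\autf_{\bs e}(\CM)\leq H$ in ($\Leftarrow$) you apply the bounded-equals-definable-closure-over-models fact (the general form of Remark \ref{preli}(1)) to $\bs e'$ and the generators of $\autf_{\bs e}(\CM)$, whereas the paper deduces it from the characterization of $\equiv^{\LL}_{\bs e}$ as the finest bounded $\bs e$-invariant equivalence relation in Remark \ref{preli}(2).
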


\begin{proof}
$(\Rightarrow)$:
Recall that (before Proposition \ref{invcts}) we have fixed $M \models T$ such that $a \in M$.
Note that
 $\aut_M(\CM)=\aut_{M\bs e}(\CM)\leq \autfe(\CM) \leq H$, and since $H'$ is closed, $\{h(M) : h \in H\}$ is type-definable over $M$.
Thus by Proposition \ref{PropHbbFTFAE}, $H = \operatorname{Aut}_{M_F\bs{e}}(\CM)$ for some $\emptyset$-type-definable equivalence relation $F$. 

It remains to show  that 
$M_F \in \bdd(\bs{e})$:
Notice that due to Corollary \ref{corgalsize}, $[\aute(\CM) : H]=\lambda$ is small. Thus
there is $\{f_i\in \aute(\CM)\mid i<\lambda\}$
such that $\aute(\CM) = \bigsqcup_{i<\lambda} f_i \cdot H$.
But for all $g, h \in \aute(\CM)$, if $g \cdot H= h \cdot H$, then $h^{-1}g\in H$ and hence
$g(M_F) = h(M_F)$.

\medskip

$(\Leftarrow)$: Without loss of generality, say
$\bs{e}' = a'/E'$  where $a' \in M$. Note that for $q(x')=\tp(a'/\bs e)$, since 
$\bs{e}' \in \bdd({\bs{e}})$,
$$ F(x',y'):= (q(x')\wedge q(y')\wedge E'(x',y'))\vee (\neg q(x')\wedge\neg q(y'))$$
is an $\bs e$-invariant bounded equivalence relation on $\CM^{|a'|}$.  Then 
due to the last statement of  Remark \ref{preli}(2) with real $x, y$ of arity $|a'|$, we have $\autf_{\bs e}(\CM)\leq H$. 
Thus $\pi^{-1}(H') = H$.
Moreover, $H = \{f \in \aute(\CM) : f(a') \models E'(x',a')\}$, so $\nu^{-1}(H') = \mu\pi^{-1}(H') = \{p(x) \in S_M(M) : E'(x',a') \subseteq p(x)\}$ where $x' \subseteq x$, which is closed.
\end{proof}

\begin{Corollary}\label{maininthesection}
Let $H'\leq \gall(T,\bs{e})$ be closed, and let  $F$ be an $\emptyset$-type-definable equivalence relation.    Then for $H = \pi^{-1}(H')$,  $x_F\equiv^H_{\bs{e}}y_F$ is 
equivalent  to $x_F\equiv_{\bs e' \bs e}y_F$ for some hyperimaginary $\bs e'\in\bdd (\bs e)$, and hence 
$x_F\equiv^H_{\bs{e}}y_F$ is 
an $\bs{e}'\bs{e}$-invariant type-definable bounded equivalence relation.
Especially, if $H' \unlhd \gall(T,\bs{e})$, then $x_F\equiv^H_{\bs{e}}y_F$ is $\bs{e}$-invariant. 
\end{Corollary}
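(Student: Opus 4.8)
\textbf{Proof plan for Corollary \ref{maininthesection}.}
The plan is to deduce everything from Proposition \ref{PropHcld}. First I would observe that $H=\pi^{-1}(H')$ with $H'$ closed is exactly the hypothesis on the left-hand side of Proposition \ref{PropHcld}, so we obtain a hyperimaginary $\bs e'\in\bdd(\bs e)$ with $H=\Aut_{\bs e'\bs e}(\CM)$. Replacing the pair $\bs e'\bs e$ by a single hyperimaginary interdefinable with it (as in the first paragraph of Section 1), and noting $\bs e\in\dcl(\bs e'\bs e)$, we may work with the single bounded-over-$\bs e$ hyperimaginary $\bs e'\bs e$. Now for any $b,c\in\CM^{|x|}$ and the $\emptyset$-type-definable $F$, the relation $b_F\equiv^H_{\bs e}c_F$ holds iff there is $h\in\Aut_{\bs e'\bs e}(\CM)$ with $h(b_F)=c_F$, which is by definition $b_F\equiv_{\bs e'\bs e}c_F$. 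So the orbit equivalence relation $x_F\equiv^H_{\bs e}y_F$ literally coincides with $x_F\equiv_{\bs e'\bs e}y_F$.

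Next I would check that $x_F\equiv_{\bs e'\bs e}y_F$ is a type-definable bounded $\bs e'\bs e$-invariant equivalence relation. Boundedness is immediate: the number of $\equiv_{\bs e'\bs e}$-classes of realizations of a fixed type is bounded, and $F$ has boundedly many classes among any fixed orbit, so the number of orbits is bounded; alternatively, $\bs e'\bs e$ is itself bounded over $\bs e$, hence small, and the orbit equivalence relation under the automorphism group fixing a small hyperimaginary is bounded. For type-definability: writing $\bs e'\bs e=a''/E''$ with $a''$ a real tuple (and $a\subseteq a''$ since $\bs e\in\dcl(\bs e'\bs e)$), the relation $x_F\equiv_{a''/E''}y_F$ is type-defined over $a''$ by
\[
\exists u\,\exists v\big(\tp_{xu}(\,?\,/\emptyset)\ \wedge\ \tp_{yv}(\,?\,/\emptyset)\ \wedge\ E''(u,a'')\wedge E''(v,a'')\wedge E''(u,v)\big),
\]
more precisely by the standard partial type expressing ``there is an automorphism fixing $a''/E''$ sending $x$ into the $F$-class of $y$'': one takes the type of $(b,a'')$ over $\emptyset$ in variables $(x,u)$, the same for $(y,v)$, conjoined with $E''(u,v)$ and $F(x,y)$-compatibility; this is a partial type over $a''$ whose solution set is precisely the $F$-orbit-equivalence relation. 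The key point that this type-definable set really is the $\equiv_{\bs e'\bs e}$-orbit relation is the standard fact that an $\bs e'\bs e$-invariant type-definable set is $\bs e'\bs e$-type-definable, together with the description of types over a hyperimaginary given in Section 1. Since $\bs e'\bs e\in\bdd(\bs e)$, all of this is $\bs e'\bs e$-invariant, giving the first assertion.

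For the last sentence, assume $H'\unlhd\gall(T,\bs e)$; then $H=\pi^{-1}(H')\unlhd\aut_{\bs e}(\CM)$, since $\pi$ is surjective. Hence the orbit equivalence relation under $H$ is invariant under the whole group $\aut_{\bs e}(\CM)$: for $g\in\aut_{\bs e}(\CM)$ and $h\in H$, $g h g^{-1}\in H$, so $b_F\equiv^H_{\bs e}c_F$ implies $g(b)_F\equiv^H_{\bs e}g(c)_F$. Thus $x_F\equiv^H_{\bs e}y_F$ is $\bs e$-invariant, and being already type-definable (over $\bs e'\bs e$, a fortiori over a real set), it is $\bs e$-type-definable. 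I expect the main obstacle to be purely bookkeeping: writing down the partial type over $a''$ correctly when $x,y$ are infinite tuples and $F$ is only $\emptyset$-type-definable, and making sure the ``interdefinable with a single hyperimaginary'' reduction does not disturb $\bdd(\bs e)$-membership — both are routine given the conventions fixed at the start of Section 1, so there is no real difficulty, only care.
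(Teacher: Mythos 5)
Your overall route is the same as the paper's: apply Proposition \ref{PropHcld} to get $H=\Aut_{\bs e'\bs e}(\CM)$ for some $\bs e'\in\bdd(\bs e)$, observe that the $H$-orbit relation on $F$-classes is then literally $x_F\equiv_{\bs e'\bs e}y_F$, and obtain $\bs e$-invariance in the normal case by conjugating inside $\aut_{\bs e}(\CM)$; those parts are correct. The weak point is your type-definability step. Your ``more precise'' description takes the type of a \emph{fixed} pair $(b,a'')$ in the variables $(x,u)$ and again in $(y,v)$. A partial type built from $\tp(b,a'')$ for a fixed $b$ only defines the relation on the solution set of the single type $\tp(b_F/\bs e'\bs e)$: for a pair lying in a different orbit it simply fails, so what you obtain is the restriction of $\equiv_{\bs e'\bs e}$ to one type, and the global relation is then a union of such type-definable pieces, which need not be type-definable. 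This per-type construction is exactly the error in \cite[5.1.6]{K} (from \cite{K1}) that the paper flags at the start of Section 2, so it should not be reproduced here. Moreover, the appeal to ``an $\bs e'\bs e$-invariant type-definable set is $\bs e'\bs e$-type-definable'' cannot close this gap, since that fact presupposes the set is type-definable over some real set, which is precisely what is at issue.

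The repair is to existentially quantify a copy of the parameter instead of fixing $b$: writing $\bs e'\bs e$ as $a''/E''$ with $a''=da$ real, the relation $x_F\equiv_{\bs e'\bs e}y_F$ is the solution set of the partial type over $a''$ given by $\exists z\, w\,\bigl((x a''\equiv z w)\wedge E''(w,a'')\wedge F(z,y)\bigr)$, where $x a''\equiv z w$ abbreviates the partial type $\{\varphi(x,a'')\leftrightarrow\varphi(z,w)\mid \varphi\in\CL\}$ and the existential quantifier over the small tuples $z,w$ is absorbed by saturation as usual. Indeed, witnesses $z,w$ give $g\in\aut(\CM)$ with $g(x)=z$, $g(a'')=w$, so $g$ fixes $a''/E''$ and $g(x_F)=y_F$; conversely any $g\in\Aut_{\bs e'\bs e}(\CM)$ with $g(x_F)=y_F$ yields witnesses $z=g(x)$, $w=g(a'')$. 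This works uniformly for all pairs, which is what the corollary requires, and is what the paper's one-line assertion of type-definability rests on (via the conventions of Section 1). Finally, tighten the boundedness argument: the phrase ``$F$ has boundedly many classes among any fixed orbit'' is not meaningful; the clean statement, and the one the paper uses, is that $x_F\equiv_{da}y_F$ refines $x_F\equiv_{\bs e'\bs e}y_F$, and there are boundedly many complete types over the real tuple $da$ in variables of length $|x|$, so $\equiv_{\bs e'\bs e}$ has boundedly many classes.
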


\begin{proof}
Due to Proposition \ref{PropHcld}, $H=\Aut_{\bs{e}'\bs{e}}(\CM)$ for some hyperimaginary $\bs{e}' =d_F\in \bdd({\bs{e}})$. Hence the $H$-orbit equivalence 
relation $x_F\equiv^H_{\bs e}y_F$ is simply $x_F\equiv_{\bs e'\bs e}y_F$ which is   type-definable, bounded (since $x_F\equiv_{da} y_F$ implies $x_F\equiv_{\bs {e}'\bs e} y_F$), and $\bs{e}'\bs{e}$-invariant.
In addition if $H\unlhd \aut_{\bs e}(\CM)$ then it  easily follows  that  $x_F \equiv^H_{\bs{e}}y_F$ is $\bs{e}$-invariant.
\end{proof}

\section{Strong types over a hyperimaginary}

In this section we reconfirm results  in \cite[5.1.6-18]{K} (excerpted from \cite{K1}),  while correcting errors in  \cite[5.1.14,15]{K} and supplying a counterexample to
\cite[5.1.15]{K}. 

We begin to equip a topology on $\aute(\mathcal{M})$ by pointwise convergence; basic open sets are of the form $O_{u,v} = \{f \in \aute(\mathcal{M}) : f(u) = v\}$ where $u,v$ are some {\em finite real} tuples.
The proof of \cite[Lemma 29]{Z} can go through in the hyperimaginary context;

\begin{Proposition}\label{proppiconti}
The projection map $\pi : \aute(\CM) \rightarrow \gall(T,\bs{e})$ is continuous.
\end{Proposition}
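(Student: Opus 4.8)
The plan is to show that for every basic open set $O_{u,v} \subseteq \aute(\CM)$ (with $u,v$ finite real tuples), its image, or rather enough to cover the relevant points, is handled via the factorization $\pi = \nu \circ \mu$ through the compact Stone space $S_M(M)$; but since $\pi$ is a quotient map only onto its image and the topology on $\aute(\CM)$ here is the pointwise-convergence topology (not a quotient topology), the cleanest route is to verify continuity directly at each point. First I would recall that a map into a topological group is continuous iff it is continuous at the identity composed with translations — more precisely, since $\gall(T,\bs e)$ is a topological group by Corollary \ref{corlastopgp} and $\pi$ is a group homomorphism, it suffices to check continuity of $\pi$ at $\id \in \aute(\CM)$: if $\pi$ is continuous at $\id$, then for arbitrary $f$ and a neighborhood $V$ of $\ov f = \pi(f)$, we have $\ov f^{-1} V$ a neighborhood of $\id$ in $G$, pull it back to a basic neighborhood $O$ of $\id$ in $\aute(\CM)$, and then $f \cdot O$ is a neighborhood of $f$ (pointwise-convergence topology is translation-invariant under the left action of $\aute(\CM)$ on itself) with $\pi(f \cdot O) = \ov f \cdot \pi(O) \subseteq \ov f \cdot \ov f^{-1} V = V$.

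So the heart of the matter is continuity at $\id$. Let $V$ be an open neighborhood of $\ov{\id} = \pi(\id)$ in $\gall(T,\bs e)$. Since $\nu : S_M(M) \to \gall(T,\bs e)$ is the quotient map defining the topology, $\nu^{-1}(V)$ is open in the compact space $S_M(M)$ and contains $\mu(\id) = \tp(M/M) =: p_0$. Now I want a basic open $O_{u,v} \ni \id$ in $\aute(\CM)$ with $\mu(O_{u,v}) \subseteq \nu^{-1}(V)$, which gives $\pi(O_{u,v}) = \nu(\mu(O_{u,v})) \subseteq V$. Because $S_M(M) \subseteq S_x(M)$ carries the Stone topology, $\nu^{-1}(V)$ being open and containing $p_0$ means there is a formula $\psi(x) \in p_0$ (over $M$, involving only finitely many coordinates, say those indexed by a finite subtuple $x' \subseteq x$ matching a finite real subtuple $u \subseteq M$) with $[\psi] \cap S_M(M) \subseteq \nu^{-1}(V)$. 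Since $p_0 = \tp(M/M)$, we have $\models \psi(u)$, i.e. $\models \psi(u)$ where $u$ is the realization in $M$ of the displayed coordinates; set $v = u$. Then for any $f \in O_{u,v} = O_{u,u}$ we have $f(u) = u$, so $f(M) \models \psi(x')$ on those coordinates, i.e. $\mu(f) = \tp(f(M)/M) \in [\psi] \cap S_M(M) \subseteq \nu^{-1}(V)$, as desired. This establishes continuity of $\pi$ at $\id$, hence everywhere.

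The main obstacle I anticipate is purely bookkeeping: matching up the "finite real tuple" index sets in the definition of the pointwise-convergence basic opens $O_{u,v}$ with the finitely-many coordinates that a single Stone-basic formula $\psi \in \CL(M)$ can mention, and making sure that the coordinates live inside $M$ (so that $u \subseteq M$ can serve as both the argument at which $f$ is pinned down and the value $v$). This is where one uses that $a \in M$ and that $M$ is a model, so any finite tuple of coordinates of $x$ has a canonical realization inside $M$; there is no genuine difficulty, just care in notation. One should also note explicitly that $\nu^{-1}(V)$ open in $S_M(M)$ together with $S_M(M)$ being a subspace of the Stone space $S_x(M)$ is exactly what licenses extracting the single formula $\psi$ — this is the same mechanism used in \cite[Lemma 29]{Z}, and as remarked in the excerpt the argument transfers verbatim to the hyperimaginary setting since nothing beyond the compactness of $S_M(M)$ and the quotient definition of the topology on $\gall(T,\bs e)$ is invoked.
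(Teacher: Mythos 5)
Your proof is correct and, at its core, is the same argument as the paper's: pull an open set back through the quotient map $\nu$, extract a single Stone-basic formula $\varphi(x')$ over $M$, and trap $\pi$ inside it with a basic open $O_{u,\cdot}$ of the pointwise topology. The only difference is that the paper runs this directly at an arbitrary $f$ using $O_{u,f(u)}$, so your preliminary reduction to continuity at $\id$ via Corollary \ref{corlastopgp} and translation-invariance, while valid (and non-circular, since that corollary precedes this proposition), is an unnecessary detour.
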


\begin{proof}
Let $U$ be an open subset of $\gall(T,\bs{e})$ and $\ov f = f\cdot \autfe(\CM) \in U$.
Since $\nu$ is a quotient map, $\nu^{-1}(U)$ is open, so there is a basic open set $V_{\varphi(x')} = \{p \in S_M(M) : \varphi(x') \in p\} \subseteq \nu^{-1}(U)$ such that $\tp(f(M)/M) \in V_{\varphi(x')}$. Let $u\in M$ be the finite tuple corresponding to 
 the finite tuple $x'$ of variables. 
Then $\mu^{-1}(V_{\varphi(x')}) = \{g \in \aute(\mathcal{M}) : g(u) \models \varphi(x')\}$ contains $f$ and $f(u) \models \varphi(x')$.
Notice that the basic open set $O_{u,f(u)} = \{h \in \aute(\mathcal{M}) : h(u) = f(u)\}$ contains $f$ and is contained in $\{g \in \aute(\mathcal{M}) : g(u) \models \varphi(x')\} = \mu^{-1}(V_{\varphi(x')})$, implying that $\pi^{-1}(U)$ is open.
\end{proof}

\begin{Corollary}\label{corfinarity}
Let $H'$ be a closed subgroup of $\gall(T, \bs{e})$, $H = \pi^{-1}(H') \leq \aute(\mathcal{M})$ and let $b, c$ be any small real tuples.
Then
\begin{align*}
H & = \{f \in \aute(\mathcal{M}): f\text{ fixes all the }\equiv^H_{\bs e}\text{-classes of any hyperimaginaries}\}\\
& = \{f \in \aute(\mathcal{M}): f\text{ fixes all the }\equiv^H_{\bs e}\text{-classes of any finite real tuples}\}.
\end{align*}
Moreover, $b \equiv^H_{\bs e} c$ iff $b' \equiv^H_{\bs e} c'$ where $b',c'$ are corresponding finite subtuples of $b,c$ respectively.
\end{Corollary}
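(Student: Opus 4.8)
The plan is to leverage Corollary \ref{maininthesection}, which tells us that for $H = \pi^{-1}(H')$ with $H'$ closed, the relation $x_F \equiv^H_{\bs e} y_F$ (for each $\emptyset$-type-definable $F$) is equivalent to $x_F \equiv_{\bs e' \bs e} y_F$ for a single hyperimaginary $\bs e' \in \bdd(\bs e)$; in particular $H = \Aut_{\bs e' \bs e}(\CM)$ by Proposition \ref{PropHcld}. So the first inclusion ``$\subseteq$'' for both displayed descriptions is immediate: if $f \in H = \Aut_{\bs e' \bs e}(\CM)$, then $f$ fixes $\bs e'$ and $\bs e$, hence fixes every $\bs e' \bs e$-definable object, in particular every $\equiv_{\bs e' \bs e}$-class, which by Corollary \ref{maininthesection} is exactly every $\equiv^H_{\bs e}$-class (of hyperimaginaries, hence a fortiori of finite real tuples). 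The content is in the reverse inclusions, and the key reduction is that it suffices to show: if $f$ fixes all $\equiv^H_{\bs e}$-classes of finite real tuples, then $f \in H$.

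The main step is therefore: show that $f$ fixing all $\equiv^H_{\bs e}$-classes of finite real tuples forces $f(\bs e') = \bs e'$. Recall from Corollary \ref{maininthesection} (and its proof via Proposition \ref{PropHcld}) that we may take $\bs e' = d_F$ where $d$ can be chosen inside our fixed model $M$ and $F$ is $\emptyset$-type-definable; by Remark \ref{bddequiv}(1), $d_F$ is interdefinable with a sequence of countable hyperimaginaries, and each countable hyperimaginary is a $\bigwedge$ over the corresponding finite real subtuples. So fixing $\bs e'$ reduces to fixing $d'/F'$ for all finite real subtuples $d' \subseteq d$, where $F'$ is the induced $\emptyset$-type-definable equivalence relation on tuples of that arity. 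Now $d'/F'$ is bounded over $\bs e$ (being a subobject of $\bs e' \in \bdd(\bs e)$), so the relation ``$x \equiv_{d' a} y$'' refines the $\bs e$-invariant bounded relation ``$x/F' \equiv_{\bs e' \bs e} y/F'$'' which, by Corollary \ref{maininthesection}, is precisely ``$x/F' \equiv^H_{\bs e} y/F'$''. Hence: if $f$ fixes every $\equiv^H_{\bs e}$-class of finite real tuples of arity $|d'|$, then in particular $f$ fixes the $\equiv^H_{\bs e}$-class of $d'/F'$, i.e. $d'/F' \equiv^H_{\bs e} f(d')/F'$; but the $\equiv^H_{\bs e}$-class of $d'/F'$ over $\bs e$ equals its $\equiv_{\bs e' \bs e}$-class, and since $\bs e' = d_F$ is \emph{definable} over itself, that class is a singleton on the relevant quotient — so $f(d'/F') = d'/F'$. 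Running this over all finite subtuples $d'$ of $d$ gives $f(\bs e') = \bs e'$, whence $f \in \Aut_{\bs e'\bs e}(\CM) = H$ (note $f \in \aute(\CM)$ is assumed throughout, so $f(\bs e) = \bs e$ is automatic).

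For the ``Moreover'' clause: since $H = \Aut_{\bs e'\bs e}(\CM)$, the relation $b \equiv^H_{\bs e} c$ is just $b \equiv_{\bs e'\bs e} c$, i.e. $\tp(b / \bs e'\bs e) = \tp(c/\bs e'\bs e)$. For real tuples $b = (b_i)_{i}$ and $c = (c_i)_i$, having the same type over the (hyperimaginary) parameter $\bs e'\bs e$ is, by the standard characterization of types of hyperimaginary parameters and compactness, equivalent to having the same type restricted to every finite subtuple of coordinates — this is the routine fact that a type in infinitely many variables is determined by its finite-variable restrictions, which passes through unchanged to the hyperimaginary-parameter setting (one spells out $\tp(b/\bs e'\bs e)$ via $\exists$-quantifiers over realizations of the type-definitions of $\bs e'$ and $\bs e$, exactly as in the definition of $\tp_x(b_F/c_L)$ recalled in Section 1). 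Thus $b \equiv^H_{\bs e} c$ iff $b' \equiv^H_{\bs e} c'$ for the corresponding finite subtuples $b', c'$.

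\textbf{Main obstacle.} The delicate point is the middle step — converting ``$f$ fixes all $\equiv^H_{\bs e}$-classes of finite real tuples'' into ``$f(\bs e') = \bs e'$'' — because $\bs e'$ is a hyperimaginary, not a finite real tuple, so one must genuinely use the decomposition of $\bs e'$ into finite real pieces via Remark \ref{bddequiv}(1) and check that the induced equivalence relations $F'$ on each finite arity are themselves captured by $\equiv^H_{\bs e}$; the identification of the $\equiv^H_{\bs e}$-class of $d'/F'$ with a singleton (using that $\bs e'$ is definable over $\bs e'\bs e$) is what makes it go through, and it is the place where one must be careful that the boundedness/definability bookkeeping is correct.
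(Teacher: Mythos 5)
Your overall strategy differs from the paper's: the paper proves the second equality by observing that $\pi$ is continuous (Proposition \ref{proppiconti}), so $H=\pi^{-1}(H')$ is closed in $\aute(\CM)$ under the pointwise-convergence topology, and then unwinding what membership in the closure means on basic open sets $O_{u,f(u)}$; your plan instead is to write $H=\Aut_{\bs e'\bs e}(\CM)$ for $\bs e'\in\bdd(\bs e)$ (Proposition \ref{PropHcld}) and to show directly that any $f$ fixing all $\equiv^H_{\bs e}$-classes of finite real tuples satisfies $f(\bs e')=\bs e'$. That plan is viable, and your treatment of the easy inclusions and of the ``Moreover'' clause (via $\equiv_{\bs e'\bs e}$ and finite character of types, matching the paper's appeal to compactness) is fine. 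The problem is the execution of the key step.

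The gap is the reduction of ``$f(\bs e')=\bs e'$'' to fixing finitary hyperimaginaries $d'/F'$ for finite subtuples $d'\subseteq d$. There is no canonical ``induced $\emptyset$-type-definable equivalence relation $F'$'' on a finite subtuple: $F$ is a conjunction of formulas each on finitely many variables, but its restriction to a finite block of coordinates is not an equivalence relation, and Remark \ref{bddequiv}(1) only gives interdefinability of a hyperimaginary with a sequence of \emph{countable} hyperimaginaries, not finitary ones -- ``each countable hyperimaginary is a $\bigwedge$ over the corresponding finite real subtuples'' is not a valid decomposition. Consequently the two claims you lean on, namely that each $d'/F'$ lies in $\dcl(\bs e'\bs e)$ (so that its $\equiv_{\bs e'\bs e}$-class is a singleton) and that fixing all the $d'/F'$ recovers $f(\bs e')=\bs e'$, are unjustified; for a poor choice of $F'$ both fail. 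The intended conclusion can be rescued without any finitary decomposition by a finite-character argument on the formulas of $F$ itself: with $\bs e'=M_F$ as in Proposition \ref{PropHcld}, given $\varphi(x,y)\in F$ involving only finitely many coordinates, let $u$ be the corresponding finite subtuple of $M$; by hypothesis there is $h\in H$ with $h(u)=f(u)$, and since $F(h(M),M)$ holds and $\varphi$ depends only on those coordinates of its first argument, $\varphi(f(M),M)$ holds; as $\varphi\in F$ was arbitrary, $F(f(M),M)$, i.e.\ $f(\bs e')=\bs e'$ and $f\in\Aut_{\bs e'\bs e}(\CM)=H$. This repaired argument is, in substance, the same computation as the paper's closedness argument, just phrased without the topology on $\aute(\CM)$.
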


\begin{proof}
For the equalities, clearly it suffices to show 
 the second equality. Notice that by Proposition \ref{proppiconti}, $H$ is closed in $\aute(\mathcal{M})$.
Thus $f \in H$ iff for every basic open set $O \subseteq \aute(\mathcal{M})$ containing $f$, $O \cap H$ is nonempty.
Recall that every $O$ is of the form $O_{u,v}$ for finite $u,v$;  and $f \in O_{u,v}$ iff $f(u) = v$. Thus $f \in H$ iff for each finite real $u$, there is $g \in H$ such that $g(u) = f(u)$.
But it is equivalent to say that $u \equiv^H_{\bs e} f(u)$ for every finite real tuple $u$.

The last statement follows by compactness, since  Corollary \ref{maininthesection} says  $x \equiv^H_{\bs e} y$ is equivalent to  $x\equiv_{\bs{e}'\bs{e}}y$ for some  $\bs{e}' \in \bdd(\bs{e})$. 
\end{proof}

\medskip

We now define   and characterize the KP(Kim-Pillay)-type over a  hyperimaginary.

\begin{Definition} $ $
\begin{enumerate}
    \item Denote 
    $$\autfkpe = \pi^{-1}(\overline{\{\operatorname{id}\}})$$ 
 where $\overline{\{\operatorname{id}\}}$ is the (topological) closure of the identity in $\gall(T,\bs{e})$.

      \item We let $\equiv^{\KP}_{\bs{e}}$ be the  orbit equivalence relation under $\autfkpe$. Namely, for hyperimaginaries
      $b_F, c_F$, 
    \[b_F \equiv^{\KP}_{\bs{e}} c_F \text{ iff }b_F \equiv^{H}_{\bs e} c_F\]
    where  $(\autf_{\bs e}(\CM)\leq )H=\autfkpe(\leq \aut_{\bs e}(\CM))$.
    We call the equivalence class $b_F / \equiv^{\KP}$ the \emph{KP-type} of $b_F$ over $\bs{e}$. Obviously $b_F \equiv^{\LL}_{\bs{e}} c_F$ implies  $b_F \equiv^{\KP}_{\bs{e}} c_F$.
    
\end{enumerate}
\end{Definition}


\begin{remark}\label{remidclnormal}
Since $\gall(T,\bs{e})$ is a topological group,  $\overline{\{\operatorname{id}\}}$ is a normal closed subgroup of $\gall(T,\bs{e})$ (see \cite{HM}). 
We denote
$$\gal_{\KP}(T,\bs e):=\gall(T,\bs e)/\ov{\{\id\}}=\aut_{\bs e}(\CM)/\autfkpe.$$
Hence $\gal_{\KP}(T,\bs e)$ is a compact topological group.
\end{remark}

Now we characterize $\equiv^{\KP}_{\bs{e}}$ and find equivalent conditions.
Some arguments are from \cite[Section 5.1]{K}.

\begin{prop}\label{propcharofKP}
Let $F$ be an $\emptyset$-type-definable equivalence relation.  
\begin{enumerate}
    \item $x_F \equiv^{\KP}_{\bs{e}} y_F$ is an $\bs{e}$-invariant type-definable bounded equivalence relation which is coarser than $F$.
    \item $x_F \equiv^{\KP}_{\bs{e}} y_F$ is the finest among the $\bs{e}$-invariant type-definable bounded equivalence relations which are coarser than $F(x,y)$.
\end{enumerate}
\end{prop}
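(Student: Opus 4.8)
\textbf{Proof proposal for Proposition \ref{propcharofKP}.}

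The plan is to get (1) by combining Corollary \ref{maininthesection} with Remark \ref{remidclnormal}, and then to get (2) by a quotient-group argument exploiting the defining property of $\overline{\{\id\}}$ as the smallest closed subgroup. For (1): since $\overline{\{\id\}}$ is a closed normal subgroup of $\gall(T,\bs e)$ (Remark \ref{remidclnormal}), the group $H=\autfkpe=\pi^{-1}(\overline{\{\id\}})$ is a normal subgroup of $\aut_{\bs e}(\CM)$ with $H'=\pi(H)=\overline{\{\id\}}$ closed, so Corollary \ref{maininthesection} applies verbatim: $x_F\equiv^H_{\bs e}y_F$ — which is by definition $x_F\equiv^{\KP}_{\bs e}y_F$ — is an $\bs e'\bs e$-invariant type-definable bounded equivalence relation, and since $H'$ is normal it is in fact $\bs e$-invariant. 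That it is coarser than $F$ is immediate since an orbit equivalence relation on $\CM^\alpha/F$ under a subgroup of $\aut_{\bs e}(\CM)$ is always coarser than $F$ (already noted at the start of Section 2), and $\autf_{\bs e}(\CM)\leq H$ guarantees it is a genuine equivalence relation refining Lascar equivalence.

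For (2), let $R(x_F,y_F)$ be any $\bs e$-invariant type-definable bounded equivalence relation on $\CM^{|x|}/F$ coarser than $F$; I must show $x_F\equiv^{\KP}_{\bs e}y_F$ refines $R$. The key step is to identify the subgroup of $\aut_{\bs e}(\CM)$ fixing all $R$-classes, call it $H_R$, and show $H_R\supseteq \autfkpe$, equivalently $\pi(H_R)\supseteq\overline{\{\id\}}$. Since $\overline{\{\id\}}$ is the closure of $\{\id\}$, it suffices to show $\pi(H_R)$ is a \emph{closed} subgroup of $\gall(T,\bs e)$ containing $\id$ and that $H_R=\pi^{-1}(\pi(H_R))$; then $\overline{\{\id\}}\subseteq\pi(H_R)$ follows and hence $\autfkpe\subseteq H_R$, which says exactly that every $\equiv^{\KP}_{\bs e}$-orbit is contained in an $R$-class, i.e. $\equiv^{\KP}_{\bs e}$ refines $R$. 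To see $H_R$ has these properties: because $R$ is bounded, $\bs e$-invariant and type-definable, it is $\bs e$-type-definable, so by the folklore/Fact-\ref{fklorefct}-type reasoning $R$ is (interdefinable over $\bs e$ with) the equivalence relation $\equiv_{\bs e''\bs e}$ for a suitable hyperimaginary $\bs e''\in\bdd(\bs e)$ built from $R$ and $a$; then $H_R=\Aut_{\bs e''\bs e}(\CM)$, and Proposition \ref{PropHcld} gives that $\pi(H_R)$ is closed with $H_R=\pi^{-1}(\pi(H_R))$. Containing $\id$ is trivial. This closes the argument.

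The main obstacle I anticipate is the book-keeping in realizing an arbitrary $\bs e$-invariant type-definable bounded equivalence relation $R$ on $\CM^{|x|}/F$ as $\equiv_{\bs e''\bs e}$ for a single bounded hyperimaginary $\bs e''$ — i.e. the analogue of Fact \ref{fklorefct} and the $(\Leftarrow)$ direction of Proposition \ref{PropHcld}, carried out at the level of $F$-classes rather than raw tuples, and checking that the resulting $\bs e''$ really is bounded over $\bs e$ (this uses boundedness of $R$: each $F$-class has boundedly many $R$-classes above it, and the canonical parameter for the family of $R$-classes lies in $\bdd(\bs e)$). Once $\bs e''$ is produced, Proposition \ref{PropHcld} and Corollary \ref{maininthesection} do the rest mechanically. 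A secondary but routine point is verifying that "$f$ fixes all $R$-classes of tuples in $\CM^{|x|}$" is the same as "$f\in\Aut_{\bs e''\bs e}(\CM)$"; this is the content of the interdefinability of $\bs e''$ with the family of $R$-classes, together with Corollary \ref{corfinarity} to reduce to finite arities if desired, though here we may work directly with arity $|x|$.
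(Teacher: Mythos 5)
Your part (1) is exactly the paper's: Corollary \ref{maininthesection} plus Remark \ref{remidclnormal} (the aside that $\autf_{\bs e}(\CM)\leq H$ makes it ``an equivalence relation refining Lascar equivalence'' has the direction reversed --- $\equiv^{\KP}_{\bs e}$ is \emph{coarser} than $\equiv^{\LL}_{\bs e}$ --- but nothing in the proof uses it). For part (2) you use the same three ingredients as the paper (Fact \ref{fklorefct}, the $(\Leftarrow)$ direction of Proposition \ref{PropHcld}, and minimality of $\ov{\{\id\}}$ among closed subgroups containing the identity), but you package them globally: you want the full stabilizer $H_R$ of \emph{all} $R$-classes realized as $\Aut_{\bs e''\bs e}(\CM)$ for a single $\bs e''\in\bdd(\bs e)$. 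The paper's proof is pointwise and avoids your ``main obstacle'' entirely: given $b_F\equiv^{\KP}_{\bs e}c_F$ and a competitor relation $L$, it applies Fact \ref{fklorefct} only to the single class $b_L$, getting $ba/L'$ which lies in $\bdd(\bs e)$ immediately because $L$ is bounded and $\bs e$-invariant; then Proposition \ref{PropHcld}$(\Leftarrow)$ gives that $\pi(\Aut_{(ba/L')\bs e}(\CM))$ is closed with full preimage, so $\autfkpe\leq \Aut_{(ba/L')\bs e}(\CM)$, and the witness $f$ with $f(b_F)=c_F$ fixes $b_L$, whence $L(b,f(b))$, $F(f(b),c)$ and so $L(b,c)$. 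Two cautions about your global route. First, your parenthetical claim that $R$ itself is ``interdefinable over $\bs e$ with $\equiv_{\bs e''\bs e}$'' is false in general (take $R$ the total relation: $\equiv_{\bs e''\bs e}$ is then just $\equiv_{\bs e}$, strictly finer); fortunately you only use the weaker and correct statement $H_R=\Aut_{\bs e''\bs e}(\CM)$, where $\bs e''$ codes the bounded family of $R$-classes (each turned into a genuine hyperimaginary via Fact \ref{fklorefct}, the family into one hyperimaginary as in Section 1). Second, the boundedness of that $\bs e''$ over $\bs e$, which you flag but do not prove, does require an argument: each coordinate lies in $\bdd(\bs e)$ since $R$ has boundedly many classes, hence is fixed by $\autfe(\CM)$ (by the last statement of Remark \ref{preli}(2), as in the proof of Proposition \ref{PropHcld}$(\Leftarrow)$), so the orbit of the whole sequence has size at most $|\gall(T,\bs e)|\leq 2^{|T|+|a|}$ by Corollary \ref{corgalsize}. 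With that supplied your proof is complete; the paper's local version simply makes this extra verification unnecessary.
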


\begin{proof}
    (1) This follows directly from   Corollary \ref{maininthesection} and Remark \ref{remidclnormal}.
    
    \medskip 
    
    (2) Let $L$ be any $\bs{e}$-invariant type-definable bounded equivalence relation coarser than $F$ and assume that $b_F \equiv^{\KP}_{\bs{e}} c_F$.
    It suffices to show that $L(b,c)$.
    Now  by Fact \ref{fklorefct}, there is a hyperimaginary $ba/L'$ such that $ba/L'$ and $b_L$ are interdefinable over $\bs{e}$.
    Thus $ba/L' \in \bdd(\bs{e})$ and by Proposition \ref{PropHcld}, $\pi(\aut_{(ba/L')\bs{e}}(\mathcal{M}))$ is closed in $\gall(T, \bs{e})$ and $\pi^{-1}(\pi(\aut_{(ba/L')\bs{e}}(\mathcal{M})) )= \aut_{(ba/L')\bs{e}}(\mathcal{M})$.
    Hence 
    $\autfkpe \leq  \aut_{(ba/L')\bs{e}}(\mathcal{M})$.
    Now $b_F \equiv^{\KP}_{\bs{e}} c_F$ implies there is $f \in \aut_{(ba/L')\bs{e}}(\mathcal{M})$ such that $f(b_F) = c_F$ and this $f$ fixes $b_L$ by the interdefinability over
    $\bs{e}$, thus we have $L(b, f(b))$ and $F(f(b),c)$, resulting $L(b,c)$ (since $L$ is coarser than $F$).
\end{proof}

\begin{prop}\label{propKPTFAE}
Let    $b_F, c_F$ be  hyperimaginaries.
The following are equivalent.
\begin{enumerate}
    \item $b_F \equiv^{\KP}_{\bs{e}} c_F$.
    \item $b_F \equiv_{\bdd(\bs{e})} c_F$.
    \item For any $\bs{e}$-invariant type-definable equivalence relation $L$ which is coarser than $F$, if $b_L$ has boundedly many $\bs{e}$-conjugates, then $L(b,c)$ holds.
    \item $bc \models \exists z(z \equiv^{\KP}_{\bs{e}} x \wedge F(z,y))$.
\end{enumerate}
Therefore  it follows $\autfkpe=\aut_{\bdd({\bs e})}(\CM).$
\end{prop}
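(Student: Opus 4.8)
The plan is to prove the cycle of equivalences $(1)\Rightarrow(2)\Rightarrow(3)\Rightarrow(4)\Rightarrow(1)$, keeping in mind that most of the heavy lifting has already been done in Proposition~\ref{propcharofKP} and Proposition~\ref{PropHcld}. For $(1)\Rightarrow(2)$: assume $b_F\equiv^{\KP}_{\bs e}c_F$ and let $\bs e'\in\bdd(\bs e)$ be an arbitrary bounded hyperimaginary; we must show $b_F\equiv_{\bs e'\bs e}c_F$. By Remark~\ref{bddequiv}(2) we may absorb $\bs e'$ into the single hyperimaginary representing $\bdd(\bs e)$, or simply work with one $\bs e'$ at a time. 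The relation $x_F\equiv_{\bs e'\bs e}y_F$ is an $\bs e'\bs e$-invariant type-definable bounded equivalence relation coarser than $F$; after applying Fact~\ref{fklorefct} we may assume it is in fact $\emptyset$-type-definable and $\bs e$-invariant (replacing the parameter $\bs e'$ by a suitable $\bs e'$-conjugate-closed version, which is still bounded over $\bs e$ since $\bs e'\in\bdd(\bs e)$). Then Proposition~\ref{propcharofKP}(2) applies: $x_F\equiv^{\KP}_{\bs e}y_F$ is the finest such relation, so $b_F\equiv^{\KP}_{\bs e}c_F$ forces $b_F\equiv_{\bs e'\bs e}c_F$. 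Since $\bs e'$ was arbitrary, $b_F\equiv_{\bdd(\bs e)}c_F$.

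For $(2)\Rightarrow(3)$: suppose $L$ is an $\bs e$-invariant type-definable equivalence relation coarser than $F$ and $b_L$ has boundedly many $\bs e$-conjugates. Then $b_L\in\bdd(\bs e)$, so by hypothesis $(2)$ we have $b_F\equiv_{b_L\bs e}c_F$; but any automorphism fixing $b_L$ and $\bs e$ sends $b_F$ to some $d_F$ with $d\mathrel{L}b$, and since the image is $c_F$ we get $F(d,c)$ hence (as $L$ is coarser than $F$) $L(b,c)$. For $(3)\Rightarrow(4)$: apply $(3)$ to the specific relation $L$ given by $x_F\equiv^{\KP}_{\bs e}y_F$, which by Proposition~\ref{propcharofKP}(1) is $\bs e$-invariant, type-definable, bounded, and coarser than $F$; then $b_L$ has boundedly many $\bs e$-conjugates, so $(3)$ yields $b\mathrel{L}c$, i.e. $b_F\equiv^{\KP}_{\bs e}c_F$, and taking $z=b$ witnesses $\exists z(z\equiv^{\KP}_{\bs e}x\wedge F(z,y))$. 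For $(4)\Rightarrow(1)$: if $bc\models\exists z(z\equiv^{\KP}_{\bs e}x\wedge F(z,y))$, pick a witness $d$ with $d\equiv^{\KP}_{\bs e}b$ and $F(d,c)$; then $d_F\equiv^{\KP}_{\bs e}b_F$ (the KP-type only depends on the $F$-class, and $\equiv^{\KP}_{\bs e}$ is coarser than $F$), while $F(d,c)$ gives $d_F=c_F$, so $b_F\equiv^{\KP}_{\bs e}c_F$.

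For the concluding sentence $\autfkpe=\aut_{\bdd(\bs e)}(\CM)$: the inclusion $\aut_{\bdd(\bs e)}(\CM)\subseteq\autfkpe$ follows because any $f$ fixing $\bdd(\bs e)$ pointwise fixes, in particular, the hyperimaginary representing $\overline{\{\id\}}$'s associated bounded closure data; more directly, apply the equivalence $(2)\Leftrightarrow(1)$ with $b_F=c_F$ ranging over all hyperimaginaries — if $f\in\aut_{\bdd(\bs e)}(\CM)$ then for every $b_F$ we have $b_F\equiv_{\bdd(\bs e)}f(b_F)$, hence by $(2)\Rightarrow(1)$, $b_F\equiv^{\KP}_{\bs e}f(b_F)$, so $f$ fixes every $\equiv^{\KP}_{\bs e}$-class and thus $f\in\autfkpe$ by Corollary~\ref{corfinarity} applied to the closed subgroup $\overline{\{\id\}}$. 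Conversely if $f\in\autfkpe$ then for every $b_F$, $b_F\equiv^{\KP}_{\bs e}f(b_F)$, so by $(1)\Rightarrow(2)$, $b_F\equiv_{\bdd(\bs e)}f(b_F)$; taking $b_F$ to range over (representatives of) the bounded hyperimaginaries in $\bdd(\bs e)$ — each of which satisfies $b_F\in\dcl(\bdd(\bs e))$ trivially — we get that $f$ fixes each such $b_F$, i.e. $f\in\aut_{\bdd(\bs e)}(\CM)$.

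The main obstacle I anticipate is the bookkeeping in $(1)\Rightarrow(2)$ around reducing an arbitrary $\bs e'\bs e$-invariant bounded type-definable equivalence relation coarser than $F$ to one that is genuinely $\bs e$-invariant and $\emptyset$-type-definable, so that Proposition~\ref{propcharofKP}(2) can be invoked verbatim; this requires care because $\bs e'$ itself is only bounded over $\bs e$, not definable over it, so one must form the $\aut_{\bs e}(\CM)$-orbit of the relation (a bounded conjunction, still type-definable and bounded) before applying Fact~\ref{fklorefct}. Everything else is a routine unwinding of definitions and repeated use of Corollary~\ref{corfinarity} to pass between "fixes all $\equiv^H_{\bs e}$-classes" and "lies in $H$".
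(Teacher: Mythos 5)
Your route is essentially the paper's: every implication reduces to Proposition~\ref{propcharofKP}, Fact~\ref{fklorefct} and Corollary~\ref{corfinarity}, and your argument for the final equality $\autfkpe=\aut_{\bdd(\bs{e})}(\CM)$ is exactly the intended combination of Corollary~\ref{corfinarity} with $(1)\Leftrightarrow(2)$. Three repairs are needed, all local. First, in $(1)\Rightarrow(2)$ your appeal to Fact~\ref{fklorefct} is stated backwards: that fact takes $\bs e$-invariance as a hypothesis, it does not produce it; and the detour through a single $\bs e'$ and its $\aut_{\bs e}(\CM)$-orbit is unnecessary, since the relation $x_F\equiv_{\bdd(\bs{e})}y_F$ is already an $\bs e$-invariant, type-definable (Remark~\ref{bddequiv}), bounded equivalence relation coarser than $F$, so Proposition~\ref{propcharofKP}(2) applies to it directly --- this is what the paper does. (If you insist on the orbit construction, you still owe a word on why the resulting intersection is bounded, e.g.\ because it is coarser than $\equiv_{\bdd(\bs{e})}$.) Second, in $(2)\Rightarrow(3)$ you write ``$b_L\in\bdd(\bs e)$'', but $L$ is only $\bs e$-invariant type-definable, so $b_L$ is not literally a hyperimaginary in the sense fixed in Section~1; you must first invoke Fact~\ref{fklorefct} to replace $b_L$ by $ba/L'$ with $L'$ an $\emptyset$-type-definable equivalence relation interdefinable with $b_L$ over $\bs e$, and then argue with $ba/L'\in\bdd(\bs{e})$ --- after that, your chain $L(b,f(b))$, $F(f(b),c)$, transitivity is exactly the paper's. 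Third, in $(3)\Rightarrow(4)$ the witness $z=b$ fails unless $F(b,c)$ happens to hold: having obtained $b_F\equiv^{\KP}_{\bs{e}}c_F$, pick $f\in\autfkpe$ with $f(b_F)=c_F$ and take $z=f(b)$, which satisfies $z\equiv^{\KP}_{\bs{e}}b$ and $F(z,c)$. With these fixes your cycle $(1)\Rightarrow(2)\Rightarrow(3)\Rightarrow(4)\Rightarrow(1)$ is correct and differs from the paper only cosmetically (the paper closes the loop as $(3)\Rightarrow(1)$ and treats $(1)\Leftrightarrow(4)$ separately).
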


\begin{proof}
(1) $\Rightarrow$ (2):
Notice that $x_F \equiv_{\bdd(\bs{e})}y_F$ is $\bs{e}$-invariant,  type-definable (see Remark \ref{bddequiv}) bounded
equivalence relation coarser than $F$.
Hence  by Proposition \ref{propcharofKP}(2), $b_F \equiv^{\KP}_{\bs{e}} c_F$ implies $b_F \equiv_{\bdd(\bs{e})} c_F$.
\medskip



(2) $\Rightarrow$ (3):
Assume (2) and the  conditions for $L$ in (3).
By Fact \ref{fklorefct}, there is a hyperimaginary $ba/L'$ such that $ba/L'$ and $b_L$ are interdefinable over $\bs{e}$.
Then $ba/L' \in \bdd(\bs{e})$, thus by (2), there is $f \in \aut_{(ba/L')\bs{e}}(\mathcal{M})$ such that $f(b_F) = c_F$.
Then $F(f(b),c)$ holds, thus $L(f(b), c)$.
But $f \in \aut_{(ba/L')\bs{e}}(\mathcal{M})$, hence fixes $b_L$, thus $L(b,f(b))$ and by transitivity $L(b,c)$ holds.

\medskip
(3) $\Rightarrow$ (1): By Proposition \ref{propcharofKP}(1),
$x_F \equiv^{\KP}_{\bs{e}} y_F$ satisfies all conditions of $L$ in (3). Thus  (3) implies that $b_F \equiv^{\KP}_{\bs{e}} c_F$.

\medskip
(1) $\Leftrightarrow$ (4):
Easy to check.

\medskip 
Now the last equality  follows from Corollary \ref{corfinarity}.
\end{proof}

We now  recall the notion of G-compactness  and its equivalent   conditions  in the context of hyperimaginaries.

\begin{Definition}
  $T$ is called \emph{G-compact} over $\bs{e}$ if the trivial identity subgroup 
  is closed in $\gall(T,\bs{e})$.
\end{Definition}
 
 Therefore by the general facts on compact groups (see \cite{HM}), $T$ is G-compact over $\bs{e}$ iff 
 $\gall(T,\bs{e})$ is Hausdorff (so compact) iff $\autf_{\bs e}(\CM)=\autfkpe$
 iff $\gall(T,\bs e)$ and $\gal_{\KP}(T,\bs e)$ are isomorphic as topological groups.

\begin{prop}\label{propcharofgcpt}
The following are equivalent.
\begin{enumerate}
    \item $T$ is G-compact over $\bs{e}$.
    \item For any hyperimaginaries $b_F$ and $c_F$, $b_F \equiv^{\KP}_{\bs{e}} c_F$ iff $b_F \equiv^{\LL}_{\bs{e}} c_F$.
    \item For any small real tuples $b$ and $c$, $b \equiv^{\KP}_{\bs{e}} c$ iff $b \equiv^{\LL}_{\bs{e}} c$.
    \item For any $\emptyset$-type-definable equivalence relation $F$, $x_F \equiv^{\LL}_{\bs{e}} y_F$ is type-definable.
\end{enumerate}
\end{prop}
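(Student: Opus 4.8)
\textbf{Proof proposal for Proposition \ref{propcharofgcpt}.}

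The plan is to prove the equivalences via the cycle $(1)\Rightarrow(2)\Rightarrow(3)\Rightarrow(4)\Rightarrow(1)$, leaning on the structural results already established. For $(1)\Rightarrow(2)$: by the remark following the definition of G-compactness, $T$ being G-compact over $\bs e$ is equivalent to $\autf_{\bs e}(\CM)=\autfkpe$. Since $\equiv^{\LL}_{\bs e}$ is the orbit equivalence relation under $\autf_{\bs e}(\CM)$ and $\equiv^{\KP}_{\bs e}$ is the orbit equivalence relation under $\autfkpe$ (by definition, together with Corollary \ref{corfinarity} so that the two groups have literally the same orbits on hyperimaginaries), equality of the groups immediately yields that the two equivalence relations coincide on all hyperimaginaries $b_F,c_F$; one direction is free since $\equiv^{\LL}_{\bs e}$ always refines $\equiv^{\KP}_{\bs e}$.

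For $(2)\Rightarrow(3)$: this is the trivial specialization, noting that a small real tuple $b$ is the hyperimaginary $b/=$, i.e.\ take $F$ to be equality. For $(3)\Rightarrow(4)$: fix an $\emptyset$-type-definable equivalence relation $F$; we must show $x_F\equiv^{\LL}_{\bs e}y_F$ is type-definable. By Proposition \ref{propcharofKP}(1), $x_F\equiv^{\KP}_{\bs e}y_F$ is an $\bs e$-invariant type-definable bounded equivalence relation, and by Corollary \ref{corfinarity} together with (3) applied to finite real subtuples, $b_F\equiv^{\KP}_{\bs e}c_F$ iff $b_F\equiv^{\LL}_{\bs e}c_F$ for all hyperimaginaries $b_F,c_F$ (the reduction from hyperimaginaries to finite real tuples in Corollary \ref{corfinarity} is what lets (3), stated only for real tuples, upgrade to the hyperimaginary level). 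Hence $x_F\equiv^{\LL}_{\bs e}y_F$ coincides with $x_F\equiv^{\KP}_{\bs e}y_F$, which is type-definable.

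For $(4)\Rightarrow(1)$: assume $x\equiv^{\LL}_{\bs e}y$ is type-definable (taking $F$ to be equality on the relevant sort). By Remark \ref{preli}(2) this relation is always bounded and $\bs e$-invariant, so now it is an $\bs e$-invariant type-definable bounded equivalence relation refining equality; by Proposition \ref{propcharofKP}(2) it is therefore refined by $x\equiv^{\KP}_{\bs e}y$, and since the reverse refinement always holds, $\equiv^{\LL}_{\bs e}$ and $\equiv^{\KP}_{\bs e}$ agree on real tuples. Via Corollary \ref{corfinarity} this forces $\autf_{\bs e}(\CM)=\autfkpe$, i.e.\ $T$ is G-compact over $\bs e$. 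The main subtlety to handle carefully is the passage between the orbit-relation formulation (on arbitrary hyperimaginaries, or on real tuples of a fixed large arity) and the group-equality formulation of G-compactness; this is exactly what Corollary \ref{corfinarity} is designed to bridge, so the argument should be routine modulo invoking it at the right places. A minor point: in $(4)$ one should note type-definability of $x_F\equiv^{\LL}_{\bs e}y_F$ for a single $F$ (equality) already suffices to run $(4)\Rightarrow(1)$, while $(1)\Rightarrow(2)\Rightarrow(3)\Rightarrow(4)$ delivers it for all $F$, closing the cycle.
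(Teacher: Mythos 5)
Your cycle is organized correctly, and most of it coincides with the paper's argument ($(1)\Rightarrow(2)$ via $\autfe(\CM)=\autfkpe$, $(2)\Rightarrow(3)$ by taking $F$ to be equality, and the use of Proposition \ref{propcharofKP}(1),(2) to pass between type-definability of $\equiv^{\LL}_{\bs e}$ and its coincidence with $\equiv^{\KP}_{\bs e}$, which is exactly how the paper handles $(2)\Leftrightarrow(4)$). The genuine gap is in your closing step $(4)\Rightarrow(1)$, where you assert that ``via Corollary \ref{corfinarity}'' the agreement of $\equiv^{\KP}_{\bs e}$ and $\equiv^{\LL}_{\bs e}$ on real tuples forces $\autfe(\CM)=\autfkpe$. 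Corollary \ref{corfinarity} cannot deliver this: it applies only to subgroups of the form $\pi^{-1}(H')$ with $H'$ closed, so it applies to $\autfkpe=\pi^{-1}(\ov{\{\id\}})$ but not to $\autfe(\CM)=\pi^{-1}(\{\id\})$ --- closedness of $\{\id\}$ is precisely the G-compactness you are trying to prove, so invoking it there would be circular. Nor does its reduction to finite tuples help: an automorphism preserving the Lascar type of every \emph{finite} real tuple need not lie in $\autfe(\CM)$ (in the second example of \cite{CLPZ} recalled in Section 3, KP-types and Lascar types agree on all finite tuples while the theory is not G-compact, so there $\autfkpe\neq\autfe(\CM)$). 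What the step actually requires is the paper's argument, which uses an \emph{infinite} small tuple and Remark \ref{preli}(3): take $f\in\autfkpe$ and a model $M$ with $a\in M$; then $M\equiv^{\KP}_{\bs e}f(M)$, hence by your agreement on small real tuples $M\equiv^{\LL}_{\bs e}f(M)$, so $f(M)=g(M)$ for some $g\in\autfe(\CM)$, and Remark \ref{preli}(3) gives $\ov f=\ov g$, i.e.\ $f\in\autfe(\CM)$.

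The same misattribution occurs in your $(3)\Rightarrow(4)$: Corollary \ref{corfinarity} reduces the orbit equivalence of \emph{real} tuples to finite subtuples, but says nothing about transferring the coincidence of KP- and Lascar-types from real tuples up to hyperimaginaries. Here, however, the repair is a one-liner not needing that corollary at all: if $f\in\autfkpe$ satisfies $f(b_F)=c_F$, then $b\equiv^{\KP}_{\bs e}f(b)$ as small real tuples, so by (3) there is $g\in\autfe(\CM)$ with $g(b)=f(b)$, whence $g(b_F)=f(b)/F=c_F$ and $b_F\equiv^{\LL}_{\bs e}c_F$. (Also, a small slip: in $(4)\Rightarrow(1)$ the relation $\equiv^{\LL}_{\bs e}$ is \emph{coarser} than equality, not ``refining'' it.) With these two substitutions --- the model-$M$ argument via Remark \ref{preli}(3), and the direct hyperimaginary upgrade --- your cycle closes and is essentially the paper's proof rearranged.
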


\begin{proof}
(1) $\Rightarrow$ (2):
By (1), $\autfkpe = \autfe(\mathcal{M})$, thus the orbit equivalence relations $\equiv^{\autfkpe}_{\bs e}$ and $\equiv^{\autfe(\mathcal{M})}_{\bs e}$ coincide for any hyperimaginaries.
\medskip

(2) $\Rightarrow$ (3):
Trivial by letting $F$ the equality.
\medskip

(3) $\Rightarrow$ (1):
Recall that  $M \models T$  and $a\in M$. Now let  $f \in \autfkpe$.
Assuming (3), in particular we have $M \equiv^{\LL}_{\bs{e}} f(M)$, thus there is $g \in \autfe(\CM)$ such that $g(M) = f(M)$.
Then by Remark \ref{preli}(3), it follows that 
$f \in \autfe(\CM)$.
Hence  $\autfkpe = \autfe(\CM)$, and (1) follows.
\medskip

(2) $\Rightarrow$ (4):
Clear by  Proposition \ref{propcharofKP}(1).
\medskip

(4) $ \Rightarrow$  (2):
We already know that $x_F \equiv^{\LL}_{\bs{e}} y_F$ is an $\bs{e}$-invariant bounded equivalence relation  coarser than $F$ (Remark \ref{preli}(2)).
Then assuming (4), by Proposition \ref{propcharofKP}(2), we have (2).
\end{proof}

As is well-known any simple theory is G-compact over a hyperimaginary (see \cite{K}), while  non G-compact theories (over $\emptyset$) are presented in \cite{CLPZ} and  \cite{P}.  In \cite{CLPZ}, an example shows the KP-type and the Lascar type of a  finite tuple over $\emptyset$ can be  distinct; and   another shows   KP-types and Lascar types over $\emptyset$ of all finite tuples are the same while they are distinct for an infinite tuple. 
In \cite[Section 3.2]{P}, an example of G-compact theory $T$ over $\emptyset$ is given, while it is no longer G-compact after naming real parameters.

\medskip

Next we  define and characterize the (Shelah) strong type over a hyperimaginary.

\begin{Definition}$ $
\begin{enumerate}
    \item $\gall^0(T,\bs{e})$ denotes the connected component of the identity in  $\gall(T,\bs{e})$. 
    \item $\Autfse := \pi^{-1}(\gall^0(T,\bs{e}))$.
    \item Two hyperimaginaries $b_F$ and $c_F$ are said to have the same {\em (Shelah) strong type}  if there is $f \in \Autfse$ such that $f(b_F) = c_F$, denoted by $b_F \equiv^{\ss}_{\bs{e}} c_F$.  Since $\autfkpe \leq \autfse$,  $b_F \equiv^{\KP}_{\bs{e}} c_F$ implies  $b_F \equiv^{\ss}_{\bs{e}} c_F$.
\end{enumerate}
\end{Definition}

\begin{remark}\label{remstrongtpdef}
Note that $\gall^0(T,\bs{e})$ is a normal closed subgroup of $\gall(T,\bs{e})$ \cite{HM} and $\equiv^{\ss}_{\bs{e}}$ is the orbit equivalence relation $\equiv^{\autfse}_{\bs e}$, so $\equiv^{\ss}_{\bs{e}}$ is type-definable over $\bs{e}$ by Corollary \ref{maininthesection}.
We denote
$$\gal_{\SS}(T,\bs e):=\gall(T,\bs e)/\gall^0(T,\bs e)=\aut_{\bs e}(\CM)/\autfse.$$
Thus $\gal_{\SS}(T,\bs e)$ is a profinite (i.e. compact and totally disconnected) topological group.
Notice that $\gall^0(T,\bs{e})$ is the intersection of all closed (normal) subgroups of finite indices in $\gall(T,\bs e)$, since such an intersection is the identity for a profinite group \cite{HM}.
\end{remark}


Now  in order to characterize $\equiv^{\ss}_{\bs{e}}$ in the context of hyperimaginaries, we  define the algebraic closure of $\bs e$.


\begin{Remark/defn}\label{lemaclinterdef}$ $
\begin{enumerate}
    \item For a hyperimaginary $\bs{e}'$, denote $\bs{e}' \in \acl(\bs{e})$ and say $\bs{e}'$ is \emph{algebraic over} $\bs{e}$ if $\{ f(\bs e')\mid f \in \aut_{\bs e}(\CM)\}$ is finite.
    \item As in Remark \ref{bddequiv}, the \emph{algebraic closure} of $\bs{e}$, $\acl(\bs{e})$ can be regarded as a bounded set of countable hyperimaginaries, which is interdefinable with 
    a single hyperimaginary $b_F \in \bdd(\bs{e})$ (but possibly $b_F \notin \acl(\bs{e})$).
 
   
    \item Notice that given $d_i/L_i\in \acl(\bs e)$  $( i\leq n)$,
as pointed out at the end of the first paragraph of Section 1, $(d_0/L_0,\dots, d_n/L_n)$ is equivalent to a single $d_L\in \acl(\bs e)$.
Hence  by compactness,  for any hyperimaginaries $b_F$ and $c_F$,
$$b_F \equiv_{\acl(\bs{e})} c_F\text{ iff }b_F \equiv_{d_L} c_F\text{ for any }d_L \in \acl(\bs{e}).$$      
\end{enumerate}
\end{Remark/defn}

The following clarifies Propositions 5.1.14 and 5.1.17 in \cite{K}.  There, the proof of 5.1.14(1)$\Rightarrow$(2) with the hyperimaginary parameter    need not work, and (1),(5)  should be deleted out (we will supply a counterexample). 

\begin{prop}\label{propcharofstr} $ $
\be \item
 $\autfse=\aut_{\acl({\bs e})}(\CM).$
\item Let $b_F, c_F$ be hyperimaginaries.
The following are equivalent.
\begin{enumerate}
    \item $b_F \equiv^{\ss}_{\bs{e}} c_F$.
    \item $b_F \equiv_{\acl(\bs{e})} c_F$.
    \item For any $\bs{e}$-invariant type-definable equivalence relation $L$ coarser than $F$, if $b_L$ has finitely many conjugates over $\bs{e}$, then $L(b,c)$ holds.
    \item $bc \models \exists z(z \equiv^{\ss}_{\bs{e}} x \wedge F(z,y))$.
\end{enumerate}
\ee
\end{prop}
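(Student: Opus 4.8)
The plan is to prove Proposition \ref{propcharofstr} by reducing everything to the already-established theory of the KP-type over a bounded hyperimaginary together with the profinite structure of $\gal_{\SS}(T,\bs e)$. The key observation is that $\acl(\bs e)$ is interdefinable with a single $d_L \in \bdd(\bs e)$ by Remark/Definition \ref{lemaclinterdef}(2), and that $\aut_{d_L \bs e}(\CM)$ is exactly the preimage under $\pi$ of a closed normal subgroup $N$ of $\gall(T,\bs e)$ (by Proposition \ref{PropHcld} plus normality, since $d_L$ being $\acl$ makes $\aut_{d_L\bs e}(\CM)$ normal in $\aut_{\bs e}(\CM)$). So for part (1) I would first show $\aut_{\acl(\bs e)}(\CM) = \aut_{d_L\bs e}(\CM)$ directly from interdefinability, and then identify the corresponding subgroup $N = \pi(\aut_{d_L\bs e}(\CM))$ of $\gall(T,\bs e)$ with $\gall^0(T,\bs e)$. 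For the inclusion $\gall^0(T,\bs e) \subseteq N$: $N$ has finite index (because $d_L\in\acl(\bs e)$ gives finitely many $\bs e$-conjugates, hence finitely many cosets) and is closed, so it contains $\gall^0(T,\bs e)$ by the last sentence of Remark \ref{remstrongtpdef}. For the reverse inclusion $N \subseteq \gall^0(T,\bs e)$, equivalently $\autfse \subseteq \aut_{\acl(\bs e)}(\CM)$: I would show that any $\bs e'\in\acl(\bs e)$ (say $\bs e' = a'/E'$ with $a'\in M$) gives rise to a closed subgroup $\pi(\aut_{\bs e'\bs e}(\CM))$ of finite index, whence it contains $\gall^0(T,\bs e)$, i.e. $\autfse$ fixes each such $\bs e'$; running over all $\bs e'\in\acl(\bs e)$ gives $\autfse \subseteq \aut_{\acl(\bs e)}(\CM)$.

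For part (2), the equivalences follow the template of Proposition \ref{propKPTFAE}. The implication (a)$\Leftrightarrow$(d) is a triviality about orbit equivalence relations. For (a)$\Rightarrow$(b): $\equiv^{\ss}_{\bs e}$ is the orbit equivalence relation under $\autfse = \aut_{\acl(\bs e)}(\CM)$ by part (1), and $b_F \equiv_{\acl(\bs e)} c_F$ means exactly $b_F \equiv_{d_L} c_F$ for all $d_L\in\acl(\bs e)$ by Remark/Definition \ref{lemaclinterdef}(3); so if $f\in\aut_{\acl(\bs e)}(\CM)$ sends $b_F$ to $c_F$ then $f$ fixes each $d_L$, giving $b_F\equiv_{d_L}c_F$. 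Conversely, using part (1) again, $\aut_{\acl(\bs e)}(\CM) = \aut_{d_L\bs e}(\CM)$ for the single interdefinable $d_L$, and $b_F\equiv_{d_L}c_F$ then directly yields $b_F\equiv^{\ss}_{\bs e}c_F$. For (b)$\Rightarrow$(c): given $L$ as in (c), apply Fact \ref{fklorefct} to get $ba/L'$ interdefinable with $b_L$ over $\bs e$; since $b_L$ has finitely many $\bs e$-conjugates, $ba/L'\in\acl(\bs e)$, so by (b) there is $f$ fixing $ba/L'$ (hence $b_L$) with $f(b_F)=c_F$, and then $F(f(b),c)$ together with $L$ coarser than $F$ and $L(b,f(b))$ gives $L(b,c)$. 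For (c)$\Rightarrow$(a): by part (1) and Proposition \ref{propcharofstr}(1), $x_F\equiv^{\ss}_{\bs e}y_F$ is itself an $\bs e$-invariant type-definable equivalence relation coarser than $F$ with $b_F$ having finitely many $\bs e$-conjugates (it is coarser than $x_F\equiv_{\acl(\bs e)}y_F$ which, being equivalent to $x_F\equiv_{d_L}y_F$ for a single algebraic $d_L$, is bounded with finitely many classes among conjugates), so (c) applied to $L = {}\equiv^{\ss}_{\bs e}$ yields $L(b,c)$, i.e. $b_F\equiv^{\ss}_{\bs e}c_F$; here one should double-check that $x_F\equiv^{\ss}_{\bs e}y_F$ really has finitely many $\bs e$-conjugate classes, which follows since $\gal_{\SS}(T,\bs e)$ is profinite and this relation has the same classes as the orbit equivalence under a finite-index closed subgroup.

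The main obstacle I anticipate is part (1), specifically the inclusion $\autfse \subseteq \aut_{\acl(\bs e)}(\CM)$ and the care needed in handling $\acl(\bs e)$ as an infinite collection of countable hyperimaginaries that is only interdefinable with a bounded (not necessarily algebraic) single hyperimaginary. One must not conflate "interdefinable with a bounded hyperimaginary" with "algebraic," and the argument that $\gall^0(T,\bs e)$ fixes every $\bs e'\in\acl(\bs e)$ must go through each individual $\bs e'$ (using that $\pi(\aut_{\bs e'\bs e}(\CM))$ is closed of finite index, hence contains $\gall^0$) rather than through the single interdefinable $d_L$, since $d_L$ itself need not be algebraic over $\bs e$. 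The other delicate point, flagged in the excerpt's remark preceding the proposition, is precisely why the naive statement "$\autfse = \aut_{d_L\bs e}(\CM)$ for a single $d_L\in\acl(\bs e)$" fails in general — there need be no single such algebraic $d_L$ — and the counterexample promised there is exactly to the deleted clauses of the old \cite[5.1.14]{K}; so the proof must be phrased to avoid that pitfall, working with $\aut_{\acl(\bs e)}(\CM)$ throughout and only invoking the interdefinable bounded $d_L$ where boundedness (not algebraicity) suffices.
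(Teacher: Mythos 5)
There are two genuine gaps. In part (1), after you (correctly) retract the finite-index claim for $N=\pi(\aut_{d_L\bs e}(\CM))$ --- the single hyperimaginary $d_L$ interdefinable with $\acl(\bs e)$ lies only in $\bdd(\bs e)$ and need not be algebraic --- the only argument left in your proposal is the one running over the individual $\bs e'\in\acl(\bs e)$: each $\pi(\aut_{\bs e'\bs e}(\CM))$ is closed of finite index, hence contains $\gall^0(T,\bs e)$. But that proves $\autfse\subseteq\aut_{\acl(\bs e)}(\CM)$ (i.e.\ $\gall^0\subseteq N$), not the inclusion you label it with; the converse inclusion $\aut_{\acl(\bs e)}(\CM)\subseteq\autfse$ is never established. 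What is missing is the paper's converse use of Proposition \ref{PropHcld} together with Remark \ref{remstrongtpdef}: every closed \emph{normal finite-index} subgroup $H'\leq\gall(T,\bs e)$ equals $\pi(\aut_{d_L\bs e}(\CM))$ for some $d_L\in\bdd(\bs e)$, and finite index of $\pi^{-1}(H')$ in $\aut_{\bs e}(\CM)$ forces $d_L\in\acl(\bs e)$; since $\gall^0(T,\bs e)$ is the intersection of all such $H'$, one gets $\bigcap\{\pi(\aut_{d_L\bs e}(\CM))\mid d_L\in\acl(\bs e)\}\subseteq\gall^0(T,\bs e)$. Without this, your (b)$\Rightarrow$(a) in part (2) (which takes $f\in\aut_{\acl(\bs e)}(\CM)$ with $f(b_F)=c_F$ and concludes $f\in\autfse$) has no support, so the gap propagates.

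In part (2), your (c)$\Rightarrow$(a) also fails: to apply (c) to $L={\equiv^{\ss}_{\bs e}}$ you need the class $b/{\equiv^{\ss}_{\bs e}}$ to have only \emph{finitely many} $\bs e$-conjugates, and your justification (``$\gal_{\SS}(T,\bs e)$ is profinite and the relation has the same classes as the orbit equivalence under a finite-index closed subgroup'') is incorrect: $\gall^0(T,\bs e)$ is an intersection of closed finite-index subgroups but need not itself have finite index, and the number of $\equiv^{\ss}_{\bs e}$-classes inside a single type can be infinite (already over $\emptyset$, with infinitely many cross-cutting finite equivalence relations one type splits into $2^{\aleph_0}$ strong types). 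This is precisely why the paper does not argue (c)$\Rightarrow$(a) directly but proves (c)$\Rightarrow$(b): for each single algebraic $d_L$ it builds an auxiliary $\bs e$-invariant type-definable relation $L'(x,y;a)=\exists w_J(p'(x,w_J;a)\wedge p'(y,w_J;a))\vee F(x,y)$, using a maximal set $J$ of $\bs e$-conjugates $d_i/L$ whose types $p(x,d_i)$ are simultaneously realized by $b$, verifies that $L'$ is an equivalence relation coarser than $F$ whose class $b_{L'}$ has finitely many $\bs e$-conjugates, and then extracts $b_F\equiv_{d_L}c_F$ from $L'(b,c)$. That construction is the real content of the hard direction and is absent from your proposal; the rest of your part (2) ((a)$\Leftrightarrow$(d), (a)$\Rightarrow$(b), (b)$\Rightarrow$(c)) matches the paper and is fine.
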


\begin{proof}
(1)  We claim first that 
$$\gall^0(T,\bs{e}) =\bigcap \{\pi(\aut_{d_L \bs{e}}(\mathcal{M}))\mid d_L\in\acl(\bs e)\}:$$ 

Let $d_L \in \acl(\bs{e})$ where $d_L$ is a hyperimaginary. Let $d^0_L(=d_L), \dots, d^n_L$ be all the conjugates of $d_L$ over $\bs e$. Then any $f\in \aut_{\bs e}(\CM)$ permutes the set $\{ d^0_L, \dots, d^n_L\}$. Hence it follows that 
$\aut_{d_L \bs{e}}(\mathcal{M})$ has a finite index in $\aut_{ \bs{e}}(\mathcal{M})$. Thus (due to Proposition \ref{PropHcld}) $\aut_{d_L \bs{e}}(\mathcal{M})=\pi^{-1}(\pi(\aut_{d_L \bs{e}}(\mathcal{M})))$ and 
  $\pi(\aut_{d_L \bs{e}}(\mathcal{M}))$ is a closed subgroup of  finite index in $\gall(T,\bs{e})$. 
Then as  in  Remark \ref{remstrongtpdef}, we have $\gall^0(T,\bs{e}) \leq \pi(\aut_{d_L \bs{e}}(\mathcal{M}))$.

Conversely, given a normal closed subgroup $H' \leq \gall(T,\bs{e})$ of finite index,
and  $H := \pi^{-1}(H')$,  Proposition \ref{PropHcld} says $H' = \pi(\aut_{b_F \bs{e}}(\mathcal{M}))$ for some $b_F \in \bdd(\bs{e})$. But since $H'$ is of finite index,  so is $H=\aut_{b_F \bs{e}}(\mathcal{M})$   in $\aut_{\bs{e}}(\mathcal{M})$,  and we must have $b_F \in \acl(\bs{e})$. Thus the claim follows from Remark   \ref{remstrongtpdef}.

Therefore 
\begin{align*}
\autfse & = \pi^{-1}(\gall^0(T,\bs{e}))
 =\pi^{-1}(\bigcap\{ \pi( \aut_{d_L\bs{e}}(\mathcal{M}))\mid
d_L\in \acl(\bs e) \})\\
&=\bigcap  \{\aut_{d_L\bs{e}}(\mathcal{M})\mid
d_L\in \acl(\bs e) \}
 = \aut_{\acl(\bs{e})}(\mathcal{M}),
\end{align*}
 where the last equality follows by Remark \ref{lemaclinterdef}(3).
\medskip

(2)(a) $ \Leftrightarrow $ (b): It follows from (1).

\medskip

(b) $\Rightarrow$ (c):
Suppose that $L$ is an $\bs{e}$-invariant type-definable equivalence relation coarser than $F$, and $b_L$ has finitely many conjugates over $\bs{e}$.
By Fact \ref{fklorefct}, there is a hyperimaginary $ba/L'$ such that $ba/L'$ and $b_L$ are interdefinable over $\bs{e}$.
Then $ba/L' \in \acl(\bs{e})$. Now by (b), there is  $f \in \aut_{\acl(\bs{e})}(\mathcal{M})$ such that  $f(b_F) = c_F$, so  $F(f(b), c)$. This $f$ fixes $ba/L'$ and $\bs{e}$, in turn, $f$ fixes $b_L$.
Thus $L(f(b), b)$, and since $L$ is coarser than $F$, we have $L(b,c)$.
\medskip

(c) $\Rightarrow$ (b): Assume (c). 
Let a hyperimaginary $d_L \in \acl(\bs{e})(\subseteq \bdd(\bs{e}))$ and let $\{d_i/L : i \in I\}$ be the set of all $\bs{e}$-conjugates of $d_L$, where $I$ is a finite set and  $d_i\equiv_{\bs e} d$ for each $i\in I$. Due to Remark \ref{lemaclinterdef}(3), it suffices to show $b_F \equiv_{d_L} c_F$.

Let $p(x,d):=\tp(b_F/d_L)= \exists z_1z_2(\tp_{z_1z_2}( bd )\wedge F(x,z_1) \wedge L(z_2, d)).$
Put $J$ the maximal subset of $I$ such that $(p(x,d) \in) \{p(x,d_i) : i \in J\}$ is realized by $b$.
Then denote $d_J = (d_i : i \in J)$, $w_J = (w_i : i \in J)$. 
and let 
\begin{align*}
p'(x,w_J,a) & :=\tp_{w_J}(d_J/\bs e) \wedge \bigwedge_{j\in J} p(x,w_i) \\
& = \exists z(w_Jz\equiv d_Ja \wedge E(z,a)) \wedge \bigwedge_{j\in J} \exists y_iz_i\equiv bd( F(x, y_i)\wedge L(z_i,w_i)) .
\end{align*}

\begin{Claim}
\[L'(x,y;a) := \exists w_J(p'(x,w_J;a) \wedge p'(y,w_J;a)) \  \vee \ F(x,y)\]
is an $\bs{e}$-invariant type-definable equivalence relation coarser than $F$.
\end{Claim}

\begin{proof}[Proof of  Claim]
We check $L'$ is an equivalence relation:
It is clearly reflexive and symmetric.
For transitivity, say $L'(b_1,b_2)$ and $L'(b_2,b_3)$ holds.
\begin{enumerate}
    \item[(i)] If $F(b_1,b_2)$ and $F(b_2,b_3)$, then $F(b_1,b_3)$, so nothing to prove.
    \item[(ii)] If $F(b_1,b_2)$ and there is $u_J$ such that $b_2b_3u_J \models p'(x,w_J,a) \wedge p'(y,w_J,a)$, since $b_1/F = b_2/F$, we have $b_1b_3u_J \models p'(x,w_J,a) \wedge p'(y,w_J,a)$.
    \item[(iii)] If there are $u_J,v_J$ such that $b_1b_2u_J \models p'(x,w_J,a) \wedge p'(y,w_J,a)$ and $b_2b_3v_J \models p'(x,w_J,a) \wedge p'(y,w_J,a)$, then in particular $b_2u_J$ and $b_2v_J$ both satisfy $p'(x,w_J,a)$.
   But we must have  $\{u_i/L\mid i\in J \}= \{v_i/L\mid i\in J \}$ as sets,  since if  $\{u_i/L\mid i\in J \} \subsetneq \{u_i/L, 
\ v_i/L\mid i\in J \} $, it contradicts the maximality of $J$.  Then due to the definition of $p'$, we  have $b_1b_3u_J \models p'(x,w_J,a) \wedge p'(y,w_J,a)$.
\end{enumerate}
$L'$ is clearly coarser than $F$ and since $p'$ is $\bs{e}$-invariant, $L'$ is also $\bs{e}$-invariant. Claim is proved.
\end{proof}
We now check that $b_{L'}$ has finitely many $\bs{e}$-conjugates.
Notice that $(d_i/L : i \in J)$ has finitely many $\bs{e}$-conjugates, say the number is $m_0$.
But by the definition of $L'$, $b_{L'}$ cannot have more than $m_0$-many conjugates.

Therefore  by  Claim and (c), we have  $L'(b,c)$. Thus $F(b,c)$ or $bc \models \exists w_J(p'(x,w_J,a) \wedge p'(y,w_J,a))$. The former case, of course $b_F \equiv_{d_L} c_F$.
The latter case, say there is $d'_J$ such that $p'(b,d'_J,a)$ and  $p'(c,d'_J,a)$ hold. Note that
we already have $ p'(b,d_J,a)$, thus as  argued in (iii) above,  we must have $\{d_i/L\mid i\in J\}=\{d'_i/L\mid i\in J\}$ and   $ \models p'(c,d_J,a)$.
But since $p(x,d) \in \{p(x,d_i) : i \in J\}$, we have $c \models p(x,d)$ and so $b_F \equiv_{d_L} c_F$, as wanted.

\medskip

 (a) $\Leftrightarrow$ (d):
Easy to check.
\end{proof}

We now state  several   criterion for  KP-types and strong types over $\bs e$ being equal, reconfirming \cite[5.1.18]{K}. Note that, $\gal^0_L(T,\bs e)=\ov{\{\id\}}$ iff 
 $\gal_{\KP}(T,\bs{e})$ is profinite  iff $\autfkpe(\CM)=\autfse$
 iff $\gal_{\KP}(T,\bs e)$ and $\gal_{\SS}(T,\bs e)$ are isomorphic as topological groups \cite{HM}.

\begin{prop}
The following are equivalent.
\begin{enumerate}
    \item $\autfkpe = \autfse$.
    \item For any hyperimaginaries $b_F$ and $c_F$, $b_F \equiv^{\KP}_{\bs{e}} c_F$ iff $b_F \equiv^{\ss}_{\bs{e}} c_F$.
    \item For any real small tuples $b$ and $c$, $b \equiv^{\KP}_{\bs{e}} c$ iff $b \equiv^{\ss}_{\bs{e}} c$.
    \item For any real finite tuples $b$ and $c$, $b \equiv^{\KP}_{\bs{e}} c$ iff $b \equiv^{\ss}_{\bs{e}} c$.
    \item $\acl(\bs{e})$ and $\bdd(\bs{e})$ are interdefinable.
\end{enumerate}
\end{prop}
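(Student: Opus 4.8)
The plan is to establish the chain $(1)\Leftrightarrow(2)\Rightarrow(3)\Rightarrow(4)\Rightarrow(1)$ and $(1)\Leftrightarrow(5)$, leaning on the characterizations of $\equiv^{\KP}_{\bs e}$ and $\equiv^{\ss}_{\bs e}$ as orbit equivalence relations under $\autfkpe$ and $\autfse$ together with Propositions \ref{propKPTFAE}(1) and \ref{propcharofstr}(1), which identify these groups with $\aut_{\bdd(\bs e)}(\CM)$ and $\aut_{\acl(\bs e)}(\CM)$ respectively. For $(1)\Rightarrow(2)$, if the two fixator subgroups coincide then the two orbit equivalence relations on any $\CM^\alpha/F$ literally coincide, so the two strong-type relations are the same; conversely $(2)\Rightarrow(1)$ follows since $\equiv^{\KP}_{\bs e}$ and $\equiv^{\ss}_{\bs e}$ agreeing on \emph{all} hyperimaginaries forces the groups to have the same orbits on each arity, hence (by Corollary \ref{corfinarity}, since both groups are $\pi$-preimages of closed subgroups) to be equal. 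The implications $(2)\Rightarrow(3)\Rightarrow(4)$ are immediate by specialization, and $(4)\Rightarrow(1)$ runs exactly as the $(3)\Rightarrow(1)$ argument in Proposition \ref{propcharofgcpt}: take the fixed model $M\ni a$, if $f\in\autfse$ then $M\equiv^{\KP}_{\bs e}f(M)$ by (4) applied to the finite subtuples of $M$ (using Corollary \ref{corfinarity} to reduce to finite arity), so there is $g\in\autfkpe$ with $g(M)=f(M)$, whence $f\in\autfkpe$ by Remark \ref{preli}(3).

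For $(1)\Leftrightarrow(5)$, I would argue: if $\acl(\bs e)$ and $\bdd(\bs e)$ are interdefinable, then $\aut_{\acl(\bs e)}(\CM)=\aut_{\bdd(\bs e)}(\CM)$ directly, i.e. $\autfse=\autfkpe$ by the two cited characterizations. Conversely, if $\autfse=\autfkpe$, then $\aut_{\acl(\bs e)}(\CM)=\aut_{\bdd(\bs e)}(\CM)$; since $\acl(\bs e)$ is interdefinable with a single $b_F\in\bdd(\bs e)$ and $\bdd(\bs e)$ with a single hyperimaginary (Remarks \ref{bddequiv}, \ref{lemaclinterdef}), and a (bounded, $\bs e$-invariant) hyperimaginary is determined up to interdefinability over $\bs e$ by its pointwise stabilizer in $\aut_{\bs e}(\CM)$, equality of stabilizers yields interdefinability of $\acl(\bs e)$ and $\bdd(\bs e)$ over $\bs e$. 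The one point needing care is that ``same stabilizer implies interdefinable'' uses that both closures are \emph{bounded} over $\bs e$, so each is fixed setwise by $\aut_{\bs e}(\CM)$ and the orbit-type over $\bs e$ is recoverable; this is where Proposition \ref{PropHcld} is invoked to pass between closed subgroups of $\gall(T,\bs e)$ and fixators of bounded hyperimaginaries.

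The main obstacle I anticipate is the direction $(4)\Rightarrow(1)$, or more precisely the bookkeeping in reducing statements about the (infinite) model $M$ to the finite-arity hypothesis in (4): one must check that the ``finite real tuples'' version of $\equiv^{\KP}_{\bs e}$ agreeing with $\equiv^{\ss}_{\bs e}$ genuinely transfers to $M$ itself, which is exactly the content of Corollary \ref{corfinarity} applied to the closed subgroups $\overline{\{\id\}}$ and $\gall^0(T,\bs e)$ — so the real work has already been done, and the remaining argument is routine. I expect the whole proof to be short once the cited results are marshalled correctly.
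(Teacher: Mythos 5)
Your proposal is correct and takes essentially the same route as the paper: the same trivial specializations, the same use of Corollary \ref{corfinarity} to pass between finite and small tuples, the same ``take $f\in\autfse$, compare $f(M)$ with $g(M)$ and invoke Remark \ref{preli}(3)'' argument to recover (1), and the same identifications $\autfkpe=\aut_{\bdd(\bs e)}(\CM)$ and $\autfse=\aut_{\acl(\bs e)}(\CM)$ for (1)$\Leftrightarrow$(5). The only difference is bookkeeping: you close the cycle with (4)$\Rightarrow$(1) directly, which is just the paper's (4)$\Rightarrow$(3) composed with its (3)$\Rightarrow$(1).
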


\begin{proof}
(1) $\Rightarrow$ (2):
Directly follows by the definition of $\equiv^{\KP}_{\bs{e}}$ and $\equiv^{\ss}_{\bs{e}}$.
\medskip 

(2) $\Rightarrow$ (3):
Clear by letting $F$ the equality.
\medskip 

(3) $\Rightarrow$ (1): Use 
the same method as the proof of $(3) \Rightarrow (1)$ in Proposition \ref{propcharofgcpt}, and work with 
  $M \models T$  such that  $\bs e\in \dcl(M)$. Let  $f \in \autfse$.
Then $M \equiv^{\ss}_{\bs{e}} f(M)$, thus by (3), there is $g \in \autfkpe$ such that $g(M) = f(M)$.
By Remark \ref{preli}(3), there is $h \in \autfe(\CM)\leq \autfKPe$ such that $hg = f$, thus $f \in \autfkpe$.
Hence we  get $\autfkpe = \autfse$.
\medskip 

(3) $\Rightarrow$ (4):
Trivial.
\medskip 

(4) $\Rightarrow$ (3):
Let $b$ and $c$ be any real small tuples such that $b \equiv^{\ss}_{\bs{e}} c$ and let  $b_0, c_0$ be any corresponding finite tuples respectively.
Then by (4), $b_0 \equiv^{\KP}_{\bs{e}} c_0$. Thus by
Corollary \ref{corfinarity}, we have $b \equiv^{\KP}_{\bs{e}} c$.
\medskip 

(1) $\Leftrightarrow$ (5):  This comes from $\autfkpe=\aut_{\bdd({\bs e})}(\CM)$ (Proposition \ref{propKPTFAE}), and  $\autfse=\aut_{\acl({\bs e})}(\CM)$ (Proposition \ref{propcharofstr}).
\end{proof}

We finish this section with an investigation of $\acleq$ and $\acl$.
Recall that  $\acleq(\bs{e}) := \{\bs{e}\} \cup (\acl(\bs{e}) \cap \CM^{\operatorname{eq}})$ is 
the \emph{eq-algebraic closure} of $\bs{e}$, where as usual $\CM^{\operatorname{eq}}$ is the set of all imaginary elements (equivalence classes of $\emptyset$-definable equivalence relations) of $\CM$.

\begin{remark}\label{coraclacleqeq}
We point out that in any $T$, $\acl(\emptyset)$ and $\acleq(\emptyset)$ are interdefinable over $\emptyset$. This follows from
 Proposition \ref{propcharofstr}(1) and the fact that $\aut_{\acleq(\emptyset)}(\CM) = \autf_{\SS}(\CM)$ \cite[Theorem 21(2)]{Z}, or one can directly
 show it  by compactness: If a hyperimaginary $c/F$ has finitely many conjugates  over $\emptyset$, then 
the union of its conjugate classes is $\emptyset$-type-defined, say by $\Phi(x)$. In $\Phi(x)$, $F$ is relatively $\emptyset$-definable with finitely many classes.
Hence by compactness one can find  a formula $\delta(x)\in\Phi(x)$ and an $\emptyset$-definable finite equivalence relation $F'$ on $\delta(x)$ such that
$c/F$ and the imaginary $c/F'$ are interdefinable. 
 
\end{remark}




However, contrary to \cite[Corollary 5.1.15]{K},  in general $\acl(\bs e)$ and $\acl^{\eq}(\bs e)$ {\em need not} be interdefinable. 
As said before Proposition \ref{propcharofstr},  the error occurred there due to the incorrect proof of \cite[5.1.14(1)$\Rightarrow$(2)]{K}.
An example  presented in \cite{DKKL} for another purpose  supplies a counterexample.
Consider the following 2-sorted model: 

\medskip

$M$ = $((M_1$, $S_1$, $\{g^1_{1/n} \mid n \geq 1\})$, $(M_2$, $S_2$, $\{g^2_{1/n} \mid n \geq 1\})$, $\delta)$ where

\begin{enumerate}
    \item $M_1$ and $M_2$ are  unit circles centered at origins of two disjoint  (real) planes.
    \item $S_i$ is a ternary relation on $M_i$, defined by $S_i(b,c,d)$ holds iff $b$, $c$ and $d$ are in clockwise-order.
    \item $g^i_{1/n}$ is a unary function on $M_i$ such that $g^i_{1/n}(b)$ = rotation of $b$ by ${2\pi}/n$-radians clockwise.
    \item $\delta : M_1\rightarrow M_2$ is the double covering, i.e.
    $\delta(\text{cos $t$, sin $t$})$ = $(\text{cos $2t$, sin $2t$})$.
    \item Let $\CM$ be a monster model of Th($M$), and let  $\CM_1$ and $\CM_2$ be the two sorts of $\CM$.
\end{enumerate}

In  \cite[Theorems 5.8 and  5.9]{DKL}, it is shown that  $\Th(\CM_i)$ has 
  weak elimination of imaginaries (that is, for  any imaginary element $c$ there is a real tuple $b$ such that $c\in\dcl(b)$ and 
  $b\in \acl(c)$), and quantifier elimination. 
We point out that  similar proofs, which we omit,  yield
the same results. 

\begin{Fact}\label{prophasqe}
$\operatorname{Th}(\CM)$ has quantifier elimination and weak elimination of imaginaries.
\end{Fact}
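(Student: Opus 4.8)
The statement to prove is Fact~\ref{prophasqe}: that $\Th(\CM)$ has quantifier elimination and weak elimination of imaginaries, where $\CM$ is the two-sorted structure built from the two circles $M_1,M_2$, the betweenness relations $S_i$, the rational-angle rotations $g^i_{1/n}$, and the double cover $\delta$. The plan is to adapt verbatim the strategy used in \cite[Theorems 5.8 and 5.9]{DKL} for the one-sorted structure $\Th(\CM_i)$, tracking the one new ingredient, namely the map $\delta$ linking the two sorts.

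First I would establish quantifier elimination by a back-and-forth argument. The key point is that in each sort the structure $(\CM_i, S_i, \{g^i_{1/n}\})$ is, up to the circle geometry, the theory of a dense cyclic order with a dense group of ``named'' rotations acting on it; this is exactly the situation handled in \cite{DKL}, and that argument gives QE relative to each sort. To splice the two sorts together, observe that $\delta$ is a surjective two-to-one homomorphism compatible with the cyclic orders and with the rotations in the sense $\delta(g^1_{1/n}(x))=g^2_{1/(2n)}(\delta(x))$ (so $\delta\circ g^1_{1/n}=g^2_{1/n}\circ\delta$ after reindexing, with the two $\delta$-preimages of a point in $\CM_2$ differing by $g^1_{1/2}$). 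Hence any quantifier over $\CM_1$ applied to a formula mentioning $\delta$ can be reduced: the fibre $\delta^{-1}(c)$ over a point $c\in\CM_2$ is $\{d, g^1_{1/2}(d)\}$ for either choice of $d$, so $\exists x_1\,\varphi$ with $\delta(x_1)$ occurring reduces to a quantifier over $\CM_2$ together with a finite disjunction over the two preimages, and a pure $\CM_1$-quantifier is handled by the one-sorted QE. I would organize this as: (i) reduce every formula to one in which $\delta$ is applied only to variables, never to composite terms; (ii) eliminate $\CM_1$-quantifiers on variables occurring under $\delta$ by passing to $\CM_2$; (iii) invoke the one-sorted QE in each remaining sort. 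The atomic formulas one ends up with are equalities of terms $g^i_{1/n}(x)=g^i_{1/m}(y)$, the betweenness relations among such terms, and equalities $\delta(x)=y$-type conditions—all of which are already quantifier-free.

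For weak elimination of imaginaries, I would argue that an imaginary $e=c/\theta$ with $\theta$ an $\emptyset$-definable equivalence relation on some product of sorts is interdefinable with a finite set of real points, by a coordinate-wise analysis of definable equivalence relations. On a single circle sort, a definable equivalence relation, after QE, is built from cyclic-order and rotation data, and its classes are cosets of a finite subgroup of rotations or finite unions of points/arcs with rational endpoints relative to $c$; in either case the class is coded by finitely many real points in $\acl$ of the class and definable from them. The interaction via $\delta$ only adds the data of which $\delta$-fibre a point lies in, which is again a finite real datum. So for each imaginary $e$ one produces a finite real tuple $b$ with $e\in\dcl(b)$ and $b\in\acl(e)$; this is precisely weak elimination of imaginaries.

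The main obstacle, I expect, is the bookkeeping in step (ii) of the QE argument: cleanly reducing quantifiers over the double-cover sort requires keeping track of the two-element fibres and the rotation reindexing $g^1_{1/n}\leftrightarrow g^2_{1/n}$ under $\delta$, and making sure no ``hidden'' quantifier survives when $\delta$ is composed with rotations. Everything else is a faithful transcription of \cite{DKL}, which is why the excerpt says ``similar proofs, which we omit, yield the same results''—so in the paper this Fact is stated without proof, and the plan above is just the justification one would write out if pressed.
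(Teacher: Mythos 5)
Your proposal takes essentially the same route as the paper: the paper gives no written proof of Fact \ref{prophasqe}, saying only that the arguments of \cite[Theorems 5.8 and 5.9]{DKL} for the one-sorted circle structure carry over, and your sketch is exactly that adaptation (treat $\delta$ via its two-point antipodal fibres $\{d, g^1_{1/2}(d)\}$ and reduce to the one-sorted QE and weak elimination of imaginaries). One correction to your compatibility formula: since $\delta$ doubles angles, $\delta\circ g^1_{1/n}=(g^2_{1/n})^2\circ\delta$, not $g^2_{1/(2n)}\circ\delta$; this does not harm your reduction, because the right-hand side is still a term of the language, but the identity as you state it is false.
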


Now for $i=1,2$, we let $E_i(x,y)$ iff $x$ and $y$ in $\CM_i$ are infinitesimally close, i.e.
\[E_i(x,y) := \bigwedge_{1<n} (S_i(x,y,g^i_{1/n}(x)) \vee S_i(y,x,g^i_{1/n}(y))),\]
which is an $\emptyset$-type-definable equivalence relation.
Let $c \in \CM_2$, $c_1, c_2 \in \CM_1$ where $\delta(c_1) = \delta(c_2) = c$, so that $c_1,c_2$ are antipodal to each other.
Notice that $c_1/E_1$ and $c_2/E_1$ are conjugates over  $c/E_2$, so that  $c_1/E_1, c_2/E_1 \in \acl(c/E_2)$.
Then the following implies that  $\acl(c/E_2)$ and $\acl^{\eq}(c/E_2)$ are {\em not} interdefinable. 

\begin{Claim}
$\acl^{\eq}(c/E_2)$ and $c/E_2$ are interdefinable in $\CM$.
\end{Claim}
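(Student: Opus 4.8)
The plan is to show that any imaginary algebraic over $c/E_2$ is already definable over $c/E_2$, which by Remark \ref{coraclacleqeq}-style compactness reasoning suffices; concretely, I will show $\aut_{c/E_2}(\CM)$ fixes every element of $\acleq(c/E_2)$, so that $\acleq(c/E_2)\subseteq\dcl(c/E_2)$, while the reverse inclusion $c/E_2\in\dcleq(\cdots)$ is trivial. To do this I first need to understand $\aut_{c/E_2}(\CM)$ concretely. Using Fact \ref{prophasqe} (quantifier elimination and weak elimination of imaginaries), I would describe $\Th(\CM)$: on each sort $\CM_i$ the structure is a dense circular order with a $\mathbb{Z}$-action by the rotations $g^i_{1/n}$, and $\delta$ identifies $\CM_1$ as a double cover of $\CM_2$ compatible with the circular orders and the rotations (with $g^1_{1/n}$ lying over $g^2_{1/2n}$ appropriately). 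Fixing $c/E_2$ means fixing the infinitesimal neighborhood (the $E_2$-class) of $c$ setwise; an automorphism may still move $c$ within its $E_2$-class and may act nontrivially on the rest of both circles.

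The key step is to pin down $\dcleq(c/E_2)$ and $\acleq(c/E_2)$. By weak elimination of imaginaries, an imaginary $e$ algebraic over $c/E_2$ is interdefinable with a finite set of real tuples from $\CM_1\cup\CM_2$, each element of which lies in $\acl(c/E_2)$. So it is enough to show: every real tuple in $\acl(c/E_2)$ lies in $\dcl(c/E_2)$, i.e. has no proper conjugates over $c/E_2$, unless forced — and to identify exactly which imaginaries $c/E_2$ defines. The two preimages $c_1,c_2=g^1_{1/2}(c_1)$ of $c$ under $\delta$ form a $2$-element set swapped by some automorphism fixing $c/E_2$ (the ``antipodal flip'' restricted suitably), which is exactly why $\acl(c/E_2)$ properly contains $\dcl(c/E_2)$ at the level of hyperimaginaries: $c_1/E_1$ and $c_2/E_1$ are conjugate but their \emph{unordered pair}, and hence the hyperimaginary $\{c_1,c_2\}/(E_1\times E_1/\text{swap})$, is fixed. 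The claim is that this pair, together with $c/E_2$ itself, generates (as imaginaries / definable closure) everything in $\acleq(c/E_2)$; more precisely, I would argue that $\{c_1/E_1,c_2/E_1\}$ as an \emph{unordered pair of $E_1$-classes} is already in $\dcleq(c/E_2)$ — indeed it is the $\delta$-preimage of the $E_2$-class of $c$ — so it contributes nothing new to $\acleq$, and that there are no further algebraic imaginaries: any automorphism fixing $c/E_2$ can be extended/modified to move any real point outside these forced neighborhoods arbitrarily along the dense circular order, so such a point has infinitely many conjugates.

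The main obstacle I expect is the careful group-theoretic/back-and-forth analysis showing that $\aut_{c/E_2}(\CM)$ acts with infinite orbits on everything except the finitely many ``forced'' objects — i.e. proving there are no unexpected algebraic imaginaries. This amounts to: given any finite real tuple $\bar b$ not contained in $\dcl(c/E_2)$, exhibit (via quantifier elimination) a partial elementary map fixing $c/E_2$ and moving $\bar b$, extendable to an automorphism, and show one can do this infinitely often. The delicate points are (a) handling the interaction between the two sorts through the double cover $\delta$ — moving a point of $\CM_1$ forces a correlated move of its image in $\CM_2$ and vice versa, and the two preimages must move ``antipodally together'' — and (b) ensuring the infinitesimal-neighborhood constraint imposed by $E_2$ (and its pullback along $\delta$ to a pair of $E_1$-neighborhoods) is the \emph{only} constraint. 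Once this orbit analysis is in hand, combining it with weak elimination of imaginaries and a compactness argument in the style of Remark \ref{coraclacleqeq} yields $\acleq(c/E_2)\subseteq\dcleq(c/E_2)$, hence the interdefinability, and the proof concludes.
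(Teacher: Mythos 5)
Your overall strategy --- use weak elimination of imaginaries to trade an algebraic imaginary for a real tuple in $\acl(c/E_2)$, and then a quantifier-elimination orbit analysis to show such tuples carry no information --- is the same as the paper's. But the decisive step is exactly what you defer as your ``main obstacle'', and as you have scoped it, it would not suffice: you only claim infinite orbits for real points \emph{outside} the ``forced neighborhoods'' (the $E_2$-class of $c$ and the $E_1$-classes of $c_1,c_2$), whereas the real points \emph{inside} those classes are precisely the prime candidates for being algebraic over $c/E_2$. If even one such point were algebraic, the claim would fail, since weak elimination of imaginaries only hands you \emph{some} real tuple in $\acl(c/E_2)$, with no control over where its coordinates lie. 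In fact there are no ``forced'' real points at all: the only invariant objects are the classes themselves, which are hyperimaginaries and hence contribute nothing to $\acleq(c/E_2)$. (Note also that the two-element set $\{c_1,c_2\}$ of real points is \emph{not} setwise invariant under $\aut_{c/E_2}(\CM)$ --- an automorphism may slide $c$ within its $E_2$-class, moving the whole fibre --- only the unordered pair of $E_1$-classes is invariant.) Consequently your subsidiary goal of ``identifying exactly which imaginaries $c/E_2$ defines'' is unnecessary.

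The single fact needed, and the one the paper extracts from quantifier elimination, is: any two infinitesimally close points $b,b'$ of either sort are conjugate under $\aut_{c/E_2}(\CM)$, since moving a point within its $E_i$-class (together with the correlated move of its fibre or image under $\delta$) never disturbs the class $c/E_2$. This covers all real points uniformly, including those inside the distinguished classes, and yields $\acl(c/E_2)\cap\CM=\emptyset$. Hence the real tuple furnished by weak elimination of imaginaries for any $e\in\acleq(c/E_2)$ must be empty, so $e\in\dcleq(\emptyset)$ is fixed by every automorphism, and interdefinability follows. (The paper packages the same computation as a contradiction: if $d_1\neq d_2\in\acleq(c/E_2)$ were conjugate over $c/E_2$, weak elimination of imaginaries makes $\acleq(d_1,d_2)$ interdefinable with its real trace $B\subseteq\acl(c/E_2)\cap\CM=\emptyset$, forcing $d_1,d_2\in\dcleq(\emptyset)$, which is absurd; your per-imaginary use of weak elimination is an inessential repackaging of this.) Until you actually prove the conjugacy of infinitesimally close points --- your back-and-forth via quantifier elimination, applied also \emph{inside} the neighborhoods --- the proposal has a genuine gap.
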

\begin{proof}
To lead a contradiction suppose that there are distinct  imaginaries $d_1,d_2\in \acl^{\eq}(c/E_2)$ such that $d_1\equiv_{c/E_2} d_2$. 
Now     weak elimination of imaginaries of $\Th(\CM)$ implies that 
$\acl^{\eq}(d_1,d_2)$ and $B:= \{b\in \CM\mid b\in \acl^{\eq}(d_1,d_2)\}$ are interdefinable \ (*). In particular,  $B\subseteq  \acl^{\eq}(c/E_2)\cap \CM$.
However, 
due to quantifier elimination,  for any  infinitesimally close $b, b'\in \CM_i$ ($i=1,2$), there is $f \in \aut_{c/E_2}(\CM)$ sending $b$ to $b'$. Hence 
indeed $B=\emptyset$, which contradicts  (*).
\end{proof}

\section{Diameter}

In \cite{N}, Newelski showed 
  that a type-definable Lascar strong type over a real set $A$ has a finite diameter, which implies that  $T$ is G-compact over $A$ iff  there exists a uniform $n < \omega$ bounding  the diameters of any real tuples over $A$. 
The proof uses intricate analyses of   open subsets  of the type-definable set. Then 
 in \cite{P},  using the notion of {\em c-free sets}, a shorter  and direct proof showing the same result is given.
 Here by applying  arguments  in  \cite{P}, we prove that the same holds  for  hyperimaginaries.
 We cite basic definitions below.
 \medskip
 
 Let $\Phi(x)=\Phi(x,b)$ be a partial type over $b$. Assume that   for any $c,d\models \Phi(x)$ there is $f\in \aut(\CM)$ such that 
 $f(c)=d$ while $\Phi(x,b) \leftrightarrow \Phi(x, f(b))$, i.e. $\Phi(\CM)=f(\Phi(\CM))$ \  (*). Note that any two realizations of $\Phi(x)$ have the same type over
 $\emptyset$.

 \begin{Definition}$ $
\be
\item A formula $\psi(x,d)$ is said to be {\em c-free} over $\Phi$ if  there are  $f_0, \dots, f_{n} \in \aut(\CM)$ for some $n<\omega$ such that 
$$\Phi(x) \models \bigvee_{i\leq n}\psi(x,f_i(d))$$ 
and $f_i(\Phi(\CM)) = \Phi(\CM)$ for every $i \leq n$.

   \item  A  type $q(x)$ is said to be {\em c-free} over $\Phi$ if for any $\varphi(x)\in \CL(\CM)$ such that $q(x) \vdash \varphi(x)$, $\varphi(x)$ is c-free over $\Phi$.
   
   \item   A  type $q(x)$ is said to be {\em weakly c-free} over $\Phi$ if for any $\psi(x)\in \CL(\CM)$ with $q(x) \vdash \psi(x)$, there is a non-c-free formula
   $\phi(x)\in\CL(\CM)$ over $\Phi$ such that $\psi(x)\vee \phi(x)$ is c-free over $\Phi$.
\ee
\end{Definition}

\begin{Definition} \label{wcfdef}
     We put $$P^{\Phi} : = \{q(x,y) \in S(\emptyset) \mid  q(x,y) \cup \Phi(x) \cup \Phi(y)\mbox{ is consistent}\},$$
    and
     $$P^{\Phi}_{\wcf}: = \{q(x,y) \in P^{\Phi} \mid q(d,y)\mbox{ is weakly c-free over }\Phi\mbox{ for any (some) }d \vDash \Phi \}.$$
\end{Definition}


The following is the key result from \cite{P} which we will use.

\begin{Fact}\label{Diamkeyprop}  $ $
\be\item $P^{\Phi}$ and $P^{\Phi}_{\wcf}$ are closed and non-empty in $S_{xy}(\emptyset)$.
\item
Let  $D\ne \emptyset$ be a relatively open subset  of $P^{\Phi}_{wcf}$.
Then there are $d_0, \dots, d_k \vDash \Phi$ such that for any $d \vDash \Phi$, there is $d' \vDash \Phi$ holding
\be
    \item $\tp(d,d') \in D$ and 
    \item $\tp(d_i, d') \in D$ for some $i \leq k$.
    \ee
\ee
\end{Fact}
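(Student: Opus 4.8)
Since the statement is quoted from \cite{P}, the plan is to reconstruct that argument, exploiting throughout that by hypothesis (*) the set $\Phi(\CM)$ is a single orbit of the group $G_\Phi:=\{f\in\aut(\CM)\mid f(\Phi(\CM))=\Phi(\CM)\}$ and that being (weakly) c-free over $\Phi$ is a $G_\Phi$-invariant property. \emph{For Part (1).} For nonemptiness I would fix $d_*\models\Phi$; then $\tp(d_*,d_*)\in P^\Phi$, and it is even c-free over $\Phi$: given $\sigma(x,y)\in\tp(d_*,d_*)$ one has $\models\sigma(d,d)$ for every $d\models\Phi$ (move $d_*$ to $d$ by an element of $G_\Phi$), so $\Phi(y)\cup\{\neg\sigma(f(d_*),y)\mid f\in G_\Phi\}$ is inconsistent and compactness yields finitely many $f_i\in G_\Phi$ with $\Phi(y)\models\bigvee_i\sigma(f_i(d_*),y)$; hence $\tp(d_*,d_*)\in P^\Phi_{\wcf}$. $P^\Phi$ is closed because if $q\notin P^\Phi$ then, by compactness, some $\varphi\in q$ is already inconsistent with a finite part of $\Phi(x)\wedge\Phi(y)$, so $[\varphi]$ is a neighbourhood of $q$ missing $P^\Phi$. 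For $P^\Phi_{\wcf}$ I would first note that whether $q(d,y)$ is weakly c-free depends only on which $\CL$-formulas lie in $q$ — a witnessing $\psi(y)\in\CL(\CM)$ with $q(d,y)\vdash\psi(y)$ can, by compactness, be replaced by some $\sigma(d,y)$ with $\sigma(x,y)\in q$ — and then, if $q\in P^\Phi$ fails to be weakly c-free with witness $\sigma(x,y)\in q$, the same $\sigma$ witnesses failure for every $q'\in[\sigma]\cap P^\Phi$ (transport by an element of $G_\Phi$ carrying a realization of the $x$-part of $q$ to one of $q'$). Thus $P^\Phi\setminus P^\Phi_{\wcf}$ is relatively open, so $P^\Phi_{\wcf}$ is closed.

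\emph{For Part (2).} Shrinking $D$, I may assume $D=[\psi(x,y)]\cap P^\Phi_{\wcf}$ for some $\psi\in\CL$; I would pick $q\in D$ and a realization $(a_0,b_0)$ of $q$ with $a_0,b_0\models\Phi$, so $\psi(x,y)\in q$ and $q(a_0,y)$ is weakly c-free. Applying weak c-freeness to $\psi(a_0,y)$ produces $\phi(y,\bar c)$ non-c-free over $\Phi$ with $\psi(a_0,y)\vee\phi(y,\bar c)$ c-free, so there are $f_0=\id,f_1,\dots,f_k\in G_\Phi$ with
$$\Phi(y)\ \models\ \bigvee_{i\le k}\bigl(\psi(f_i(a_0),y)\ \vee\ \phi(y,f_i(\bar c))\bigr).$$
Put $d_i:=f_i(a_0)$ ($i\le k$; note $d_0=a_0$). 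The decisive feature is that, $\phi$ being non-c-free, no finite union of $G_\Phi$-translates of $\phi(y,\bar c)$ covers $\Phi(\CM)$. Now given $d\models\Phi$ I would choose $h\in G_\Phi$ with $h(a_0)=d$ and transport the displayed cover by $h$; since the finitely many formulas $\{\phi(y,f_i(\bar c))\}_{i\le k}\cup\{\phi(y,hf_i(\bar c))\}_{i\le k}$ are all $G_\Phi$-translates of $\phi(y,\bar c)$, they do not cover $\Phi(\CM)$, so there is $d'\models\Phi$ avoiding all of them; the original cover then forces $\models\psi(d_i,d')$ for some $i\le k$ (this gives (b)), and the transported cover forces $\models\psi(hf_i(a_0),d')$ for some $i$, in particular $\models\psi(d,d')$ once the alternatives $i\ge1$ are ruled out. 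To rule them out, and to guarantee that the resulting $\tp(d,d')$ and $\tp(d_i,d')$ actually land in $P^\Phi_{\wcf}$ rather than merely in $[\psi]$, I would follow \cite{P} in choosing $d'$ so as to realize a prescribed weakly-c-free type over $d$ instead of merely satisfying $\psi$.

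\emph{Expected main obstacle.} The crux is to force one and the same $d'$ to be simultaneously linked to the varying $d$ and to a \emph{fixed} $d_i$: any $G_\Phi$-automorphism moving $a_0$ onto $d$ moves the $d_i$ with it, so a naive conjugation will not do. The resolution is precisely the non-c-freeness of the error term $\phi$ — the one property stable under adjoining finitely many more $G_\Phi$-translates — combined with the bookkeeping of \cite{P} that places the produced types inside $P^\Phi_{\wcf}$; I expect reproducing that last step faithfully to be the only genuinely delicate part of writing the argument out in full.
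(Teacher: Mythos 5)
The paper gives no proof of this statement at all: it is quoted verbatim as a Fact from Pelaez's thesis \cite{P} (``The following is the key result from \cite{P} which we will use''), so there is no in-paper argument to compare yours against and I can only judge your reconstruction on its own terms. Your Part (1) is essentially correct: non-emptiness via $\tp(d_*,d_*)$, closedness of $P^{\Phi}$ by compactness, and closedness of $P^{\Phi}_{\wcf}$ by reducing a failure witness to a formula $\sigma(d,y)$ with $\sigma\in q$ and transporting by automorphisms preserving $\Phi(\CM)$ all work; you should only add the one-line remarks that a c-free type is weakly c-free because an inconsistent formula is non-c-free, and that the reduction to witnesses of the form $\sigma(d,y)$ uses that $\sigma(d,y)\vdash\psi(y,e)$ transports to $\sigma(f_i(d),y)\vdash\psi(y,f_i(e))$.

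Part (2), however, has a genuine gap, located exactly at the step you defer to \cite{P}. First, your mechanism for (a) does not work as described: transporting the cover by $h$ only yields $\models\psi(hf_i(a_0),d')$ for \emph{some} $i$, and non-c-freeness of $\phi$ lets $d'$ dodge the $\phi$-disjuncts but gives no way to ``rule out the alternatives $i\ge 1$'', i.e.\ the disjuncts $\psi(h(d_i),d')$. The correct move is the one you only gesture at in your last sentence: make $d'$ realize the complete type $q(d,y)$, so that $\tp(d,d')=q\in D$; but then you owe a proof that $q(d,y)\cup\Phi(y)\cup\{\neg\phi(y,f_i(\bar c)):i\le k\}$ is consistent, which needs weak c-freeness of $q(d,y)$ and is not automatic (weak c-freeness speaks about consequences of $q(d,y)$ alone, while $\Phi(y)$ and several translates of $\phi$ are in play; nothing in the definitions rules out that translates of two non-c-free formulas jointly cover $\Phi(\CM)$). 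Second, and more seriously, conclusion (b) demands $\tp(d_i,d')\in D\subseteq P^{\Phi}_{\wcf}$, not merely $\models\psi(d_i,d')$; your argument produces only the latter, and nothing in your setup forces the type linking the \emph{fixed} $d_i$ to the chosen $d'$ to be weakly c-free. This is not cosmetic: in the paper's application (Theorem \ref{typedeflstp}) $D$ sits inside the closed set $X_n$ cut out by the infinite type $\Phi_n$, so knowing only that $\tp(d_i,d')$ contains the single formula $\psi$ would not place it in $X_n$ and the diameter bound would not follow. Since you explicitly leave this ``genuinely delicate part'' to \cite{P}, the proposal as written establishes a strictly weaker statement than the Fact being quoted.
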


Now we begin to prove our hyperimaginary context result Theorem \ref{typedeflstp}.
Assume that given an $\emptyset$-type-definable equivalence relation $F$ and real $b$,  the Lascar strong type of $b_F$ over $\bs{e}$ is type-definable. That is 
there is a partial type $\pi_0(x)$  (over $ab$) such that $d\models \pi_0(x)$ iff $d_F \equiv_{\bs{e}}^{\LL} b_F$.


\begin{Definition}$ $
    Recall that we say  the {\em Lascar distance} between $b_F$ and $d_F$ over $\bs e$ is at most $n \in \omega$, denoted by $d_{\bs{e}}(b_F,d_F) \leq n$, if
    there are $b=b_0, b_1, \dots, b_{n}=d\in \CM$ 
    such that $b_i/F$,  $b_{i+1}/F$ begin an $\bs e$-indiscernible sequence for each $i < n$.
    If $d_{\bs e}(b_F, d_F)\leq n$ for some $n<\omega$, then we let  $d_{\bs e}(b_F, d_F) := \min \{m <\omega\mid  d_{\bs e}(b_F, d_F)\leq m\}$.
    The \emph{(Lascar) diameter} of $b_F$ over $\bs e$ is $\max\{d_{\bs{e}}(b_F,d_F) \mid b_F \equiv^{\LL}_{\bs e} d_F\}$ if exists.
    Otherwise, it is $\infty$.
\end{Definition}

\begin{Remark}\label{diantypedef}$ $
\be\item
 By Remark \ref{preli}(2), $b_F \equiv_{\bs{e}}^{\LL} d_F$ iff $d_{\bs{e}}(b_F,d_F) \leq n$ for some $n < \omega$.

\item For $r(x,y):=\tp(ba)$, we let
$$\pi(x,y) \equiv \pi_0(x) \wedge r(x,y) \wedge E(y,a),$$
 a type over $ab$.
If $dc, d'c' \vDash \pi(x,y)$, then there is $f \in \aut(\CM)$ such that $f(dc) = d'c'$. Then since  $f(\bs e)=\bs e$ and $f(\pi_0(\CM))=\pi_0(\CM)$, it 
follows that  $f(\pi(\CM)) = \pi(\CM)$.
 Therefore $\pi(x,y)$ satisfies   the assumption (*)  in the beginning of this section.
Now as in Definition \ref{wcfdef}, we let 
$$P(xy,x'y'):=P^\pi \mbox{ and }  P_{\wcf}(xy,x'y'):=P^\pi_{\wcf},$$
and we can apply Fact  \ref{Diamkeyprop}  to $P$ and $P_{\wcf}$.

\item Recall that   $(b_i/F\mid i<\omega)$ is $\bs e$-indiscernible iff there are $a'\equiv a$ with $E(a,a')$  and $b'_i$ ($i<\omega$) with $F(b_i,b'_i)$ such that 
$(b'_i\mid i<\omega)$ is $a'$-indiscernible.

\item Clearly there is  a type $\Lambda(x_0,x_1,z)$ over $\emptyset$ saying $x_0,x_1$ begin a $z$-indiscernible sequence.
Using (3) and $\Lambda$, we can $\emptyset$-type-define a relation $\Phi_n(xy, x'y')$ saying that $(\tp_{xy}(ba) = )r(x,y), r(x',y'),E(y,y')$ and $d_{y/E}(x_F,x'_F)\leq n$. Namely we let 
\begin{align*}
\Phi_n(xy;x'y') \equiv & \ r(x,y) \wedge r(x',y') \wedge E(y,y') \wedge \\
& \exists x_n \exists(x_ix'_{i} z_i)_{i< n} (x=x_0  \wedge  F(x_n,x')\wedge \\ 
& \bigwedge_{i< n}F(x_i,x'_i)\wedge E(y,z_i) \wedge \Lambda (x'_i, x_{i+1},z_i)).
\end{align*}
\ee
\end{Remark}


\begin{fact}[Baire Category Theorem]\label{BCT} $ $
\begin{enumerate}
    \item A topological space $X$ is a {\em Baire space} iff whenever the union of countably many closed subsets of $X$ has an interior point, at least one of the closed subsets must have an interior point.
    \item Any locally compact Hausdorff space is a Baire space.
\end{enumerate}
\end{fact}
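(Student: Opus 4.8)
The plan is to deduce part (1) from the familiar definition of a Baire space by passing to complements, and to prove part (2) by the classical nested-compact-sets argument. Throughout I take ``$X$ is a Baire space'' to mean that every countable intersection of dense open subsets of $X$ is dense.

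For (1): suppose $X$ is a Baire space, and let $F_n$ ($n<\omega$) be closed sets with $\bigcup_n F_n \supseteq V$ for some nonempty open $V$. If no $F_n$ had an interior point, then each $U_n := X\setminus F_n$ would be dense and open, so $\bigcap_n U_n = X\setminus\bigcup_n F_n$ would be dense; but this set is disjoint from the nonempty open set $V$, a contradiction, so some $F_n$ has an interior point. Conversely, suppose the stated condition holds and let $U_n$ ($n<\omega$) be dense open. Put $F_n := X\setminus U_n$, a closed set with empty interior (since $U_n$ is dense). If $\bigcap_n U_n$ failed to be dense there would be a nonempty open $V$ with $V\cap\bigcap_n U_n=\emptyset$, i.e. $V\subseteq\bigcup_n F_n$; then by hypothesis some $F_n$ has an interior point, contradicting the density of $U_n$. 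Hence $\bigcap_n U_n$ is dense and $X$ is a Baire space. This part is entirely routine.

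For (2): let $X$ be locally compact Hausdorff, let $(U_n)_{n\geq 1}$ be dense open, and fix a nonempty open $W$; it suffices to find a point of $W$ lying in every $U_n$. The tool I would use is the standard shrinking lemma: in a locally compact Hausdorff space, any point $x$ in an open set $O$ admits an open neighborhood $V\ni x$ with $\overline{V}$ compact and $\overline{V}\subseteq O$ (pass to the interior of a compact neighborhood $K$ of $x$, then use that $K$, being compact Hausdorff, is regular, so one can separate $x$ from the complement of $O\cap\operatorname{int}(K)$ within $K$). Apply this repeatedly: since $U_1$ is dense, $W\cap U_1$ is nonempty open, so choose nonempty open $V_1$ with $\overline{V_1}$ compact and $\overline{V_1}\subseteq W\cap U_1$; and given nonempty open $V_n$ with $\overline{V_n}$ compact, since $U_{n+1}$ is dense, $V_n\cap U_{n+1}$ is nonempty open, so choose nonempty open $V_{n+1}$ with $\overline{V_{n+1}}$ compact and $\overline{V_{n+1}}\subseteq V_n\cap U_{n+1}$. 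The sets $\overline{V_n}$ form a decreasing chain of nonempty closed subsets of the compact set $\overline{V_1}$, hence have the finite intersection property, so $\bigcap_n\overline{V_n}\neq\emptyset$; any point of this intersection lies in $\overline{V_1}\subseteq W$ and in $\overline{V_n}\subseteq U_n$ for every $n\geq 1$, as desired.

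I expect the only step requiring care to be the shrinking lemma in the locally compact Hausdorff setting; for the spaces actually used in this paper — which are compact Hausdorff (quasi-compact and Hausdorff), such as $S_M(M)$ and $S_{xy}(\emptyset)$ — it is immediate, since compact Hausdorff spaces are normal and closed subsets of compact spaces are compact. Everything else is bookkeeping with the finite intersection property, and no further results from the paper are needed.
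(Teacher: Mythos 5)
Your proof is correct and complete. The paper states this Fact without proof, treating it as the classical Baire Category Theorem, so there is nothing to compare against; your argument --- deriving (1) from the dense-open-intersection definition by passing to complements, and proving (2) by the shrinking lemma together with the finite intersection property for the nested compact closures $\overline{V_n}$ --- is the standard textbook proof, and as you observe, the only spaces to which the paper actually applies the result (closed subspaces of Stone spaces, such as $P_{\wcf}\subseteq S_{xy,x'y'}(\emptyset)$) are compact Hausdorff, where the shrinking step is immediate from normality.
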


We are ready to prove our goal.

\begin{Theorem} \label{typedeflstp}
Type-definable Lascar strong type of a hyperimaginary  over a hyperimaginary has a finite diameter.
\end{Theorem}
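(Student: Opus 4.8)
The strategy is to follow the c-free set argument of \cite{P}, working with the types $P(xy,x'y')=P^\pi$ and $P_{\wcf}(xy,x'y')=P^\pi_{\wcf}$ attached to the type-definable Lascar strong type $\pi(x,y)$ as in Remark \ref{diantypedef}(2). First I would observe that the relations $\Phi_n(xy;x'y')$ of Remark \ref{diantypedef}(4) witness that the set of pairs at Lascar distance $\le n$ is $\emptyset$-type-definable, and hence the subset $Q_n:=\{q\in P\mid \Phi_n\in q\}$ is a closed subset of $P$ (and of $S_{xy,x'y'}(\emptyset)$). Since $\pi$ describes a single Lascar strong type, $b_F\equiv^{\LL}_{\bs e}d_F$ for all realizations, so by Remark \ref{diantypedef}(1) we have $P=\bigcup_{n<\omega}Q_n$; thus $P$ is covered by countably many closed sets.

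Next I would pass to $P_{\wcf}$, which by Fact \ref{Diamkeyprop}(1) is a closed, non-empty (hence locally compact Hausdorff, being closed in a compact Hausdorff Stone space) subspace, so by Fact \ref{BCT}(2) it is a Baire space. Restricting the cover, $P_{\wcf}=\bigcup_{n<\omega}(Q_n\cap P_{\wcf})$ is a countable union of relatively closed subsets of the Baire space $P_{\wcf}$, and since $P_{\wcf}$ has interior in itself, Fact \ref{BCT}(1) gives some $n$ for which $Q_n\cap P_{\wcf}$ has non-empty interior in $P_{\wcf}$; call this relatively open set $D$. Now I apply Fact \ref{Diamkeyprop}(2) to $D$: there are $d_0c_0,\dots,d_kc_k\models\pi$ such that for every $dc\models\pi$ there is $d'c'\models\pi$ with $\tp(dc,d'c')\in D$ and $\tp(d_ic_i,d'c')\in D$ for some $i\le k$. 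Because $D\subseteq Q_n$, the condition $\tp(uv,u'v')\in Q_n$ forces $d_{\bs e}(u_F,u'_F)\le n$ (reading off $\Phi_n$), so $d_{\bs e}(d_F,d'_F)\le n$ and $d_{\bs e}((d_i)_F,d'_F)\le n$.

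Finally I would assemble the uniform bound. The finitely many hyperimaginaries $(d_i)_F$ all lie in the (single) Lascar strong type $\pi_0(\CM)/F$, so each has some finite Lascar distance $m_i:=d_{\bs e}(b_F,(d_i)_F)<\omega$ to $b_F$; put $m:=\max_i m_i$. Then for an arbitrary $d\models\pi_0$, choosing $d'$ and $i$ as above and using the triangle inequality for $d_{\bs e}$, we get
\[
d_{\bs e}(b_F,d_F)\ \le\ d_{\bs e}(b_F,(d_i)_F)+d_{\bs e}((d_i)_F,d'_F)+d_{\bs e}(d'_F,d_F)\ \le\ m+n+n,
\]
so the diameter of $b_F$ over $\bs e$ is at most $m+2n<\omega$. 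For a general type-definable Lascar strong type of a hyperimaginary $b_L$ over $\bs e$, I would first reduce to the case above: replacing $b_L$ by an interdefinable $b'/F$ (using that any hyperimaginary is of the form $b/F$ and that type-definability and Lascar distance are preserved under interdefinability), the argument applies verbatim. The main obstacle is the bookkeeping in Remark \ref{diantypedef}(4)—making sure $\Phi_n$ really is $\emptyset$-type-definable and that membership of a complete type in $Q_n$ genuinely encodes the distance bound through the existential quantifiers over the intermediate points $x_i,x'_i$ and the auxiliary parameters $z_i$ with $E(y,z_i)$—together with checking that $Q_n$ is closed (which is immediate once $\Phi_n$ is a partial type over $\emptyset$) and that the triangle inequality for $d_{\bs e}$ behaves correctly when the intermediate hyperimaginary $d'_F$ is produced only as a realization rather than named in advance.
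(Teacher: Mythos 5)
Your proposal is correct and follows essentially the same route as the paper: the same $P$, $P_{\wcf}$, $\Phi_n$, Baire category step and Fact \ref{Diamkeyprop}(2), differing only in the final bookkeeping (you anchor the chain directly at $b_F$ via $m=\max_i d_{\bs e}(b_F,(d_i)_F)$ to get $m+2n$, while the paper applies the fact to both the arbitrary realization and to $ba$ itself and gets $4n+m_0$). The only point to state explicitly is the harmless reduction that an arbitrary $d$ with $d_F\equiv^{\LL}_{\bs e}b_F$ may be replaced by the $F$-equivalent tuple $f(b)$ for some $f\in\autf_{\bs e}(\CM)$, so that it extends to a realization of $\pi$ before Fact \ref{Diamkeyprop}(2) is invoked.
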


\begin{proof} We will show there is  $m \in \omega$ such that for any $u_F$  with $u_F \equiv^{\LL}_{\bs{e}} b_F$, we have $d_{\bs{e}}(u_F,b_F) \leq m$ 
\ ($\dag$).
As pointed out in Fact \ref{Diamkeyprop}(1), $P$ is closed in $S_{xy,x'y'}(\emptyset)$. Indeed, $P(xy,x'y')$ is type-defined by 
$$\exists zwz'w'(xyx'y'\equiv zwz'w' \wedge \pi(z,w)\wedge \pi(z',w')),$$
which is $\emptyset$-invariant, so $\emptyset$-type-definable. 

\begin{Claim}
$P (xy,x'y') \subseteq \bigcup_{i<\omega} [\Phi_i(xy,x'y')]$.
\end{Claim}

\begin{proof}[Proof of Claim]
Suppose that  $\tp(d_0c_0d'_0c'_0) \in P$. Hence there is $dcd'c'\equiv d_0c_0d'_0c'_0$ such that  $\pi(d,c)$ and  $ \pi(d',c')$ hold.
It suffices to show that 
$ \Phi_n(dc,d'c')$ for some $n$.
Notice that $dc \equiv ba \equiv d'c'$ and $c_E =  \bs{e}(= a_E) = c'_E$ hold.
Also, since $d_F \equiv^{\LL}_{\bs{e}} d'_F(\equiv^{\LL}_{\bs e} b_F)$, 
$ \Phi_n(dc,d'c')$ must holds for some $n$ as described in Remark \ref{diantypedef}.
\end{proof}

Now we let $X_i = P  \cap [\Phi_i(xy,x'y')]\subseteq S_{xyx'y'}(\emptyset)$, so that
 $P = \bigcup_{i<\omega} X_i  $ by Claim.
Note that  both $X_i$ and $P_{\wcf}$ are closed (Fact \ref{Diamkeyprop}),  so $P_{\wcf} = \bigcup_{i<\omega} (P_{\wcf} \cap X_i)$ is compact Hausdorff.
Then by Baire Category Theorem, there is $n < \omega$ such that the interior of $P_{\wcf} \cap X_n$ in $P_{\wcf}$ is nonempty, so we can apply Fact \ref{Diamkeyprop}.
Namely there are $u_0v_0, \dots, u_kv_k \vDash \pi$ 
such that for any real $uv\models \pi$, there is  $u'v'\models \pi$ such that 
\be
    \item[(a)] $\tp(uv,u'v') \in X_n$ and 
    \item[(b)]  $\tp(u_{i_0}v_{i_0}, u'v') \in X_n$ for some $i_0 \leq k$.
    \ee
In particular, $\bs e=v_E=v'_E=v_i/E$ for all $i\leq k$, and $d_{\bs e}(u_F, u'_F), d_{\bs e}(u_{i_0}/F, u'_F)\leq n$  \ ($*$). 
Now  there is  $m_0\in \omega$ such that 
$d_{\bs{e}}(u_i/F,u_j/F) \leq m_0$ for all $i,j\leq k$,  since $u_i/F\equiv^{\LL}_{\bs e} u_j/F$.
We  show that  $m=4n+m_0$ is the bound  in ($\dag$) above.

Suppose that $u_F \equiv^{\LL}_{\bs{e}} b_F$. 
Since there is $f\in\autf_{\bs e}(\CM)$ such that  $u_F=f(b_F)$, without loss of generality we can assume $u=f(b)$. Hence there is  $v$ such that $uv\equiv ba$ and 
 $\pi(u,v)$ hold. Since $\pi(b,a)$ holds as well, by (a)(b) above,  there are  $u'v', b'a' \vDash \pi$ such that 
 
\begin{enumerate}
    \item $\tp(ba,b'a'), \tp(uv,u'v') \in X_n$, and 
    \item $\tp(u_j v_j,b'a'), \tp(u_l v_l ,u'v') \in X_n$ for some $ j, l \leq k$.
\end{enumerate}

Therefore by the same reason in $(*)$ above, 

\begin{align*}
d_{\bs{e}}(u_F, b_F) &\leq d_{\bs{e}}(u_F, u'_F) + d_{\bs{e}}(u'_F, u_l/F) + d_{\bs{e}}(u_l/F, u_j/F)\\
& \quad + d_{\bs{e}}(u_j/F, b'_F) + d_{\bs{e}}(b'_F, b_F) \\
&\leq n + n + m_0 + n + n = 4n + m_0.
\end{align*}
\end{proof}



In addition to Proposition \ref{propcharofgcpt}, we now supply more conditions
equivalent to $T$ being G-compact over $\bs e$. In particular, 
by applying  Theorem \ref{typedeflstp} we extend Newelski's result on the uniform finite bound for the diameters of real tuples,  mentioned in the beginning of this section, to the hyperimaginary context. 


\begin{Corollary}\label{corgcptchar}
The following are equivalent.
\begin{enumerate}
    \item $T$ is G-compact over $\bs{e}$.
    \item For any hyperimaginary $b_F$, there is a partial type $\Psi(x)$ (over $ab$)
     such that $c\models \Psi(x)$ iff $c_F\equiv^{\LL}_{\bs e}b_F$. 
  
    \item For any hyperimaginary $b_F$, there is $n < \omega$ such that for any $c_F \equiv^{\LL}_{\bs{e}} b_F$, we have $d_{\bs{e}}(b_F, c_F) \leq  n$.

    \item There is a uniform $n < \omega$ such that for every pair of  hyperimaginaries $b_F$ and $c_F$, $b_F \equiv^{\LL}_{\bs e} c_F$ iff $d_{\bs{e}}(b_F, c_F) \leq n$.
\end{enumerate}
\end{Corollary}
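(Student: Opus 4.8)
The plan is to prove the chain $(1)\Rightarrow(2)\Rightarrow(3)\Rightarrow(4)\Rightarrow(2)\Rightarrow(1)$, leaning on Proposition~\ref{propcharofgcpt}, Theorem~\ref{typedeflstp}, and the type $\Phi_n$ of Remark~\ref{diantypedef}(4).

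The easy links come first. For $(1)\Rightarrow(2)$: by Proposition~\ref{propcharofgcpt} (the equivalence of its conditions (1) and (4)), $x_F\equiv^{\LL}_{\bs e}y_F$ is type-definable, hence, being $\bs e$-invariant, type-definable over $a$; so for fixed $b_F$ the Lascar class $\{x:x_F\equiv^{\LL}_{\bs e}b_F\}$ is type-definable over $ab$, which is $(2)$. For $(2)\Rightarrow(3)$: condition $(2)$ for $b_F$ is exactly the hypothesis that the Lascar strong type of $b_F$ over $\bs e$ is type-definable, so Theorem~\ref{typedeflstp} applies and yields a finite Lascar diameter, which is $(3)$. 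For $(4)\Rightarrow(2)$: if $n$ is as in $(4)$, then $\{x:x_F\equiv^{\LL}_{\bs e}b_F\}=\{x:d_{\bs e}(b_F,x_F)\le n\}$, which is type-definable over $ab$ via $\Phi_n$.

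For $(2)\Rightarrow(1)$ I would fix (as before Proposition~\ref{invcts}) a small model $M$ with $a\in M$, regard an enumeration of $M$ as a real tuple, and let $\Psi(x)$ over $M$ be the type supplied by $(2)$ for this tuple, so that $c\models\Psi$ iff $c\equiv^{\LL}_{\bs e}M$. The point is then to verify $\nu^{-1}(\{\ov{\id}\})=[\Psi]\cap S_M(M)$: the inclusion $\subseteq$ holds because any $k\in\autfe(\CM)$ witnesses $k(M)\equiv^{\LL}_{\bs e}M$, so $\tp(k(M)/M)\in[\Psi]$; the inclusion $\supseteq$ holds because if $\tp(h(M)/M)\in[\Psi]$ then $h(M)\equiv^{\LL}_{\bs e}M$, so $h$ agrees on $M$ with some $k\in\autfe(\CM)$, and Remark~\ref{preli}(3) gives $\ov h=\ov k=\ov{\id}$. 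Hence $\nu^{-1}(\{\ov{\id}\})$ is closed, and since $\nu$ is a quotient map $\{\ov{\id}\}$ is closed in $\gall(T,\bs e)$; that is, $T$ is G-compact over $\bs e$.

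The real content is $(3)\Rightarrow(4)$, the hyperimaginary version of Newelski's theorem, and I expect the uniformity here to be the main obstacle. I would fix a small model $M^*$ with $a\in M^*$ that realizes every type over $\bs e$ of every finite real tuple (small because $|S_{<\omega}(\bs e)|$ is bounded, cf.\ Corollary~\ref{corgalsize}), and set $n_0$ to be the Lascar diameter of $M^*$ over $\bs e$, finite by $(3)$ applied to $M^*$. To see $n_0$ works for all finite real tuples: given $b\equiv^{\LL}_{\bs e}c$, realize $\tp(b/\bs e)$ inside $M^*$ and move it by some $g\in\aut_{\bs e}(\CM)$ to a copy $M_0:=g(M^*)$ containing $b$; choosing $f\in\autfe(\CM)$ with $f(b)=c$, put $M_1:=f(M_0)$, so $M_1\ni c$ and $M_1\equiv^{\LL}_{\bs e}M_0$. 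Since $g$ fixes $\bs e$ it preserves both $\equiv^{\LL}_{\bs e}$ and $\bs e$-indiscernibility, so the Lascar diameter of $M_0$ over $\bs e$ is again $n_0$; thus $d_{\bs e}(M_0,M_1)\le n_0$, and restricting a witnessing chain of models to the coordinates occupied by $b$ (these occupy the positions of $c$ in $M_1=f(M_0)$) gives $d_{\bs e}(b,c)\le n_0$. The hyperimaginary and infinite-tuple cases then follow by restricting to finite real subtuples and amalgamating the finite-arity chains by compactness. The delicate point is that $n_0$ must be uniform across \emph{all} arities, which is precisely why $M^*$ is required to absorb all finite-arity types over $\bs e$ before $(3)$ (equivalently Theorem~\ref{typedeflstp}) is invoked.
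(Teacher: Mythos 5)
Your proposal is correct, but your proof of the key implication $(3)\Rightarrow(4)$ takes a genuinely different route from the paper. The paper argues by contradiction and a packing trick: if $(4)$ fails, it collects countably many counterexample pairs $b_n/F_n\equiv^{\LL}_{\bs e}c_n/F_n$ with $d_{\bs e}(b_n/F_n,c_n/F_n)>n$, codes $b=(b_n)_n$ with $F=\bigwedge_n F_n$ into a single hyperimaginary, applies $(3)$ to $b_F$ to get one bound $m$, and then moves $b_F$ by an $f\in\autfe(\CM)$ chosen so that $f(b_m)=c_m$, so that restricting the resulting short chain to the $m$-th coordinate contradicts the choice of the $m$-th pair; uniformity over all arities is thus obtained ``for free'' by coding, with no compactness amalgamation needed. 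You instead obtain the uniform bound positively, as the Lascar diameter $n_0$ of a single small model $M^*$ realizing all finite types over $\bs e$, transfer it to every finite real pair by embedding $b$ into an $\bs e$-conjugate $M_0$ of $M^*$, taking $M_1=f(M_0)$ for $f\in\autfe(\CM)$ with $f(b)=c$, and restricting a chain from $M_0$ to $M_1$ to the coordinates of $b$; then you pass to infinite tuples and hyperimaginaries by compactness. Both exploit that $(3)$ applies to infinite objects (the paper: the coded tuple; you: $M^*$). Your version is more explicit about where the uniform bound comes from, at the cost of the extra universal-model construction and the finite-to-infinite compactness step; when writing that last step out you should make explicit the (one-line) preliminary move of replacing $c$ by $f(b)$ for some $f\in\autfe(\CM)$ with $c_F=f(b)_F$, so that the two hyperimaginaries have Lascar-equivalent real representatives before you restrict to finite subtuples. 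Your remaining implications essentially match the paper, except that the paper closes the cycle by $(4)\Rightarrow(1)$ via Proposition \ref{propcharofgcpt}(4), whereas you go $(4)\Rightarrow(2)\Rightarrow(1)$, verifying directly that $\nu^{-1}(\{\ov{\id}\})$ is the closed set $[\Psi]\cap S_M(M)$; this is a correct, slightly more hands-on substitute for citing that proposition.
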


\begin{proof}
(1) $\Rightarrow $ (2), (4) $\Rightarrow $ (1):
By Proposition \ref{propcharofgcpt}(4).
\medskip

(2) $\Rightarrow$ (3): By  Theorem \ref{typedeflstp}.
\medskip 

(3) $\Rightarrow$ (4):
Assume (3).
To lead a contradiction, suppose not (4),  that is for each $n$ there is a pair of hyperimaginaries 
$b_n/F_n$, $c_n/F_n$ such that $b_n/F_n\equiv^{\LL}_{\bs e} c_n/F_n$ but  $d_{\bs e}(b_n/F_n, c_n/F_n)> n$ \ (*).  

Without loss of generality we can  assume $b_n\equiv^{\LL}_{\bs e} c_n$.
Consider $b=(b_n\mid n<\omega)$, and let $F=\bigwedge_n F_n$.
As pointed out in the first paragraph of Section 1, $b_F$ is a single hyperimaginary.
Then by (3), there must be $m$ such that for any real $d$ with $|d| = |b|$, we have $b_F\equiv^{\LL}_{\bs e} d_F$ iff $d_{\bs e}(b_F,d_F)\leq m$. Now since $b_m \equiv^{\LL}_{\bs e} c_m$, there is 
$f\in \autfe(\CM)$ such that $f(b_m)=c_m$.
We now let $c_F=f(b_F)$ where $c=(c'_n\mid n<\omega)$ and $c'_n=f(b_n)$ (so $c_m=c'_m$).
Thus $b_F\equiv^{\LL}_{\bs e} c_F$, and $d_{\bs e}(b_F,c_F)\leq m$.
In particular $d_{\bs e}(b_m/F_m,c_m/F_m)\leq m$, contradicting (*).
\end{proof}





\section{Relativized Lascar groups}

In \cite{DKL}, the notions of relativized Lascar groups for real types are introduced.
In this last section, we generalize the definitions for the hyperimaginary  types, and  
supply a partial positive answer (Theorem \ref{thmkernelfinite}) to a question raised in \cite{DKKL}, which is even a new result in the real context. We will use Proposition \ref{propcharofgcpt} 
 in proving the result. 
Throughout this section we fix a hyperimaginary $b_F$ and $p = \tp(b_F/\bs{e})$.

\begin{Definition}    $ $
\begin{enumerate}
    \item $\aut(p) =\aute(p):= \{f \upharpoonright p(\CM) \mid f \in \aute(\CM)\}$.
    \item For a cardinal $\lambda > 0$, $\autf^{\lambda}(p) =\autfe^{\lambda}(p):=$
    \[\{f \in \aut(p) \mid \text{ for any }\overline{b_F} = (b_i/F)_{i < \lambda}\text{ such that }b_i/F \models p,\text{ }\overline{b_F} \equiv^{\LL}_{\bs{e}} f(\overline{b_F})\}.\]
    \item $\autf^{\operatorname{fix}}(p) = \autfe^{\operatorname{fix}}(p):=$
    \[\{f \in \aut(p) \mid \text{ for any }\lambda \text{ and }\overline{b_F} = (b_i/F)_{i < \lambda}\text{ such that }b_i/F \models p,\text{ }\overline{b_F} \equiv^{\LL}_{\bs{e}} f(\overline{b_F})\}.\]
    \item $\gall^{\lambda}(p) = \aut(p) / \autf^{\lambda}(p)$.
    \item $\gall^{\operatorname{fix}}(p) = \aut(p) / \autf^{\operatorname{fix}}(p)$.
\end{enumerate}
\end{Definition}

\begin{remark}$ $
\begin{enumerate}
\item Obviously,  if $f \in \autfe(\CM)$ then $f\restriction p(\CM) \in \autf^{\lambda}(p)$.
    \item One can easily check that $\autf^{\lambda}(p), \autf^{\operatorname{fix}}(p)$ are normal subgroups of $\aut(p)$.
    \item There is an example in \cite{DKKL} that $\gall^1(p)$ and $\gall^2(p)$ are distinct for $p\in S(\emptyset)$.
\end{enumerate}
\end{remark}


We point out the following basic facts  which can be proved by the same arguments in \cite{DKL}. (A detailed proof may be found in \cite{Le}.)


\begin{Fact}\label{rellastopgp}$ $
\be
\item
$\autf^{\operatorname{fix}}(p) = \autf^{\omega}(p)$, so $\gall^{\operatorname{fix}}(p) =\gall^{\omega}(p) $.

\item
Up to isomorphism, the group structure of $\gall^{\lambda}(p)$ is independent of the choice of a monster model.
\item
Recall that  $\bs{e} \in \dcl(M)$. Then $\nu' : S_M(M) \rightarrow \gall^{\lambda}(p)$ defined by $\nu'(\tp(f(M)/M)) = (f \upharpoonright p(\CM)) \cdot \autf^{\lambda}(p)$ is well-defined and $\gall^{\lambda}(p)$ is a quasi-compact topological group with the quotient topology given by $\nu'$, which is independent of the choice of $M$.
\ee
\end{Fact}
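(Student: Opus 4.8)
The plan is to mirror the development of Sections 1--2 almost verbatim, substituting the relativized quotient $\gall^\lambda(p)$ for $\gall(T,\bs e)$ and tracking where $p$ enters. For part (1), one shows $\autf^{\mathrm{fix}}(p)=\autf^\omega(p)$ by a counting/compactness argument: the inclusion $\autf^{\mathrm{fix}}(p)\subseteq\autf^\omega(p)$ is trivial, and for the converse, given $f\in\autf^\omega(p)$ and an arbitrary tuple $\overline{b_F}=(b_i/F)_{i<\lambda}$ of realizations of $p$, one must show $\overline{b_F}\equiv^{\LL}_{\bs e}f(\overline{b_F})$; by Corollary \ref{corfinarity} (or directly by the compactness characterization of $\equiv^{\LL}_{\bs e}$ as a bounded type-definable relation) it suffices to check the condition on each finite subtuple, and any finite subtuple embeds into an $\omega$-tuple of realizations of $p$, so the hypothesis $f\in\autf^\omega(p)$ applies. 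This gives $\autf^{\mathrm{fix}}(p)=\autf^\omega(p)$ and hence $\gall^{\mathrm{fix}}(p)=\gall^\omega(p)$.

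For part (2), the argument is the same as for Theorem \ref{gallindmon}: given two monster models $\CM\prec\CM'$, restriction induces a surjective homomorphism $\aute(\CM')\to\aute(\CM)$ that carries $\autf^\lambda_{\bs e}(p)$ computed in $\CM'$ onto the one computed in $\CM$, using that Lascar equivalence of tuples from $p(\CM)$ does not depend on the ambient monster model (Remark \ref{preli}(2)); one checks the induced map $\gall^\lambda(p)(\CM')\to\gall^\lambda(p)(\CM)$ is a well-defined isomorphism. For part (3), well-definedness of $\nu'$ is exactly Remark \ref{preli}(3): if $\tp(f(M)/M)=\tp(g(M)/M)$ with $\bs e\in\dcl(M)$ then $\ov f=\ov g$ in $\gall(T,\bs e)$, i.e.\ $f^{-1}g\in\autfe(\CM)$, so $f^{-1}g\restriction p(\CM)\in\autf^\lambda(p)$ by the first remark after the definition. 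Thus $\nu'$ factors through $\nu:S_M(M)\to\gall(T,\bs e)$, giving a continuous surjection onto $\gall^\lambda(p)$ when the latter carries the quotient topology; quasi-compactness follows since $S_M(M)$ is compact. Independence of the topology from the choice of $M$ is then proved exactly as in Proposition \ref{indepmn}.

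The remaining content is that $\gall^\lambda(p)$ is a \emph{topological group}, and here the plan is to invoke the work already done rather than redo it. There is a surjective continuous homomorphism $\rho:\gall(T,\bs e)\to\gall^\lambda(p)$ sending $\ov f$ to $(f\restriction p(\CM))\cdot\autf^\lambda(p)$ (well-defined since $\autfe(\CM)$ maps into $\autf^\lambda(p)$), and by construction the quotient topology on $\gall^\lambda(p)$ induced by $\nu'$ agrees with the quotient topology induced by $\rho$ from $\gall(T,\bs e)$, because $\nu'=\rho\circ\nu$ and $\nu$ is a quotient map. Now $\autf^\lambda(p)$ is normal in $\aut(p)$, so $\gall^\lambda(p)$ is a quotient \emph{group} of the topological group $\gall(T,\bs e)$ (Corollary \ref{corlastopgp}) by the normal subgroup $\ker\rho=\pi(\,\{f\in\aute(\CM): f\restriction p(\CM)\in\autf^\lambda(p)\}\,)$; a quotient of a topological group by a normal subgroup, with the quotient topology, is again a topological group. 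The main obstacle is the bookkeeping check that the two quotient topologies on $\gall^\lambda(p)$ (the one from $\nu'$ on $S_M(M)$ and the one from $\rho$ on $\gall(T,\bs e)$) genuinely coincide, which reduces to $\nu$ being a quotient map and $\nu'=\rho\circ\nu$; once that is in hand the topological-group property transfers for free, so no analogue of the delicate Section~1 continuity arguments (Proposition \ref{invcts}, Theorem \ref{thmmulcon}) needs to be repeated.
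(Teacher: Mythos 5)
Your part (1) contains a genuine gap. You reduce $\ov{b_F}\equiv^{\LL}_{\bs e} f(\ov{b_F})$ for a $\lambda$-tuple to the corresponding statement for finite subtuples by invoking Corollary \ref{corfinarity} ``or the compactness characterization of $\equiv^{\LL}_{\bs e}$ as a bounded type-definable relation''. Neither tool is available here: Corollary \ref{corfinarity} requires the subgroup $H'\leq\gall(T,\bs e)$ to be closed, and for $\equiv^{\LL}_{\bs e}$ the relevant subgroup is $\pi(\autfe(\CM))=\{\operatorname{id}\}$, which is closed exactly when $T$ is G-compact over $\bs e$; likewise $\equiv^{\LL}_{\bs e}$ is bounded and $\bs e$-invariant but is type-definable only under G-compactness (Proposition \ref{propcharofgcpt}), which is not assumed in this Fact. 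Indeed Lascar equivalence fails to have finite character in general: in the example of \cite{CLPZ} recalled after Proposition \ref{propcharofgcpt}, Lascar types and KP-types agree on all finite tuples but differ on an infinite tuple, and since $\equiv^{\KP}$ does have finite character (Corollary \ref{corfinarity} applied to the closed subgroup $\ov{\{\operatorname{id}\}}$), one obtains two infinite tuples all of whose finite subtuples are Lascar equivalent while the tuples are not. Note also that if your reduction were valid it would prove the stronger identity $\autf^{\operatorname{fix}}(p)=\bigcap_{n<\omega}\autf^{n}(p)$, which is not what the Fact asserts. The correct argument uses the Lascar distance: if $f\in\autf^{\omega}(p)$ and $\ov{b_F}\not\equiv^{\LL}_{\bs e}f(\ov{b_F})$, then $d_{\bs e}(\ov{b_F},f(\ov{b_F}))>n$ for every $n$; since the relation $d_{\bs e}(x_F,y_F)\leq n$ \emph{is} type-definable (Remark \ref{diantypedef}(4)), it has finite character, so for each $n$ some finite subtuple is at distance $>n$ from its $f$-image; the union of these finite subtuples is a countable subtuple $\ov{b_F}\!{}^{0}$ with $d_{\bs e}(\ov{b_F}\!{}^{0},f(\ov{b_F}\!{}^{0}))>n$ for all $n$, i.e. $\ov{b_F}\!{}^{0}\not\equiv^{\LL}_{\bs e}f(\ov{b_F}\!{}^{0})$, contradicting $f\in\autf^{\omega}(p)$. (The point is that Lascar equivalence of a countable tuple yields a single uniform bound on the distances of all its finite subtuples.)

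Your part (3) is the right route and matches what is intended (the paper itself defers to \cite{DKL} and \cite{Le}): realize $\gall^{\lambda}(p)$ as the quotient of $\gall(T,\bs e)$ by the normal subgroup $\pi(\{f\in\aute(\CM):f\restriction p(\CM)\in\autf^{\lambda}(p)\})$, check $\nu'=\rho\circ\nu$, and transfer the topological-group property from Corollary \ref{corlastopgp}; this is correct. In part (2), however, the mechanism you propose does not work as stated: restriction does not give a map $\aute(\CM')\to\aute(\CM)$, since an automorphism of the larger monster need not preserve the smaller one setwise. The standard argument (as for Theorem \ref{gallindmon}, cf. \cite[Remark 5.1.2]{K}) instead identifies both groups through the presentation via $S_{M}(N)$ for a model $M\prec\CM\prec\CM'$ with $\bs e\in\dcl(M)$, using that Lascar equivalence of tuples of realizations of $p$ is absolute between the two monsters.
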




We use a boldface letter $\bs b$ to denote the fixed hyperimaginary $b_F$, so the type $p(x)=\tp(\bs b/\bs e)$. 
Now 
let $\bs c=c_L$ be a  hyperimaginary such that $\bs c \in \acl(\bs b \bs{e})$.
Say $\bs c_0(=c_0/L), \cdots, \bs c_{m-1}= c_{m-1}/L$ ($1\leq m$) are all the distinct $\bs b\bs{e}$-conjugates of $\bs c$.
{\bf We assume that $\bs b \bs c_i \not\equiv^{\LL}_{\bs{e}} \bs b\bs c_j$ for any distinct $i,j < m \quad (\dag)$} until Theorem \ref{thmkernelfinite}.
Put
\[\overline{p}(xy) = \tp(\bs b\bs c/\bs{e}) \equiv \exists z_1z_2w( \tp_{z_1z_2w}(bca) \wedge F(x, z_1) \wedge L(y, z_2) \wedge E(a, w)).\]
In the rest, $\bs b\bs c$ with attached small scripts refers to a  realization of $\p(x,y)$. Moreover for example, given  $\bs b'\bs c'$, the corresponding plain letter 
$b'c'$ refers to a real tuple such that $\bs b'=b'/F$ and $\bs c'=c'/L$, so that $b'c'$ is a real realization of   $\p(x,y)$ and vice versa. 
Notice that $\bs b'\bs c'\equiv_{\bs e} \bs b\bs c$, while  $b'c'\equiv_{\bs e}bc$ {\em need not} hold.

For each  $\bs b' \models p$, we fix $\bs c^{\bs b'}_0 \cdots \bs c^{\bs b'}_{m-1}$ which is   the image of $\bs c_0 \cdots \bs c_{m-1}$  under some $\bs e$-automorphism sending $\bs b$ to $\bs b'$. 
We may just write $\bs b'\bs c^{\bs b'}$ to refer to some  $\bs b'\bs c^{\bs b'}_{j}$ ($j<m$).
As usual we write $x_{< n}$ to denote the sequence of variables $x_0 \cdots x_{n-1}$ and 
we use similar abbreviations for  finite sequences of hyperimaginaries. 

\begin{nota} $ $
\begin{enumerate}
\item For $T$ G-compact over $\bs{e}$, 
we write 
$E^{\LL}(-,-,a)$ to denote an $\bs{e}$-invariant type over $a$ defining 
the Lascar equivalence over $\bs e$ of hyperimaginaries (see Proposition \ref{propcharofgcpt}). We may assume that $E^{\LL}$ is closed under finite conjunctions and  every formula $\phi(x,y,a) \in E^{\LL}$ is reflexive and symmetric.
    \item $\pi_n : \gall^n(\overline{p}) \rightarrow \gall^n(p)$ is the natural projection. Namely $\pi_n((f\restriction \ov p(\CM))\cdot \autf^n(\ov p))=(f\restriction p(\CM))\cdot \autf^n(p)$, which clearly  is well-defined. 
    \item $K_n$ is the kernel of $\pi_n : \gall^n(\overline{p}) \rightarrow \gall^n(p)$.  We write $\ov f$ to denote $f\cdot \autf^n(\ov p)\in \gall^n(\ov p).$
\end{enumerate}
\end{nota}

In \cite{DKKL}, it is asked whether $K_n$ is finite for finite $n>0$ (when both $\bs b, \bs e$ are real) if $T$ is G-compact over $\bs e$. 
In this section we positively answer the question under the assumption ($\dag$) (even for hyperimaginaries).


\begin{Lemma}\label{lemlstpfml}
Assume that $T$ is G-compact over $\bs{e}$ and that  $(\dag)$ holds.
Then given positive $n < \omega$, there is a formula
\[\alpha_n((xy)_{< n}, (x'y')_{< n}, a) \in E^{\LL}((x_Fy_L)_{< n}, (x'_Fy'_L)_{< n}, a)\]
such that if
\[\models (\bigwedge_{i < n}\overline{p}(\bs b'_i,\bs c'_i) \wedge \overline{p}(\bs b_i'',\bs c_i'')) \wedge \bs b'_{< n}\equiv^{\LL}_{\bs e} \bs b''_{< n} \wedge \alpha_n((b'c')_{< n}, (b''c'')_{< n}, a),\]
then $\models E^{\LL}((\bs b'\bs c')_{< n}, (\bs b''\bs c'')_{< n}, a)$.
\end{Lemma}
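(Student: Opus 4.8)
The strategy is a standard compactness argument, using the observation that $E^{\LL}(\cdot,\cdot,a)$ is a \emph{type}, hence the statement asks precisely for one formula in that type to suffice on a type-definable piece. First I would set up the relevant type-definable sets. Let $\Sigma((xy)_{<n},(x'y')_{<n},a)$ be the partial type over $a$ asserting: $\bigwedge_{i<n}\big(r_0(x_iy_i,a)\wedge r_0(x'_iy'_i,a)\big)$, where $r_0$ describes $\tp_{(xy)}(\bs b\bs c/\bs e)$ lifted appropriately so that its solutions are realizations $b_ic_i$, $b'_ic'_i$ of $\ov p$; together with $\bs b_{<n}\equiv^{\LL}_{\bs e}\bs b'_{<n}$ (which is type-definable by $E^{\LL}$ applied to the $x$-variables, since $T$ is G-compact over $\bs e$, via Proposition \ref{propcharofgcpt} / Notation); together with all of $E^{\LL}((x_Fy_L)_{<n},(x'_Fy'_L)_{<n},a)$. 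I claim $\Sigma$ is inconsistent with the negation of the desired conclusion in the following sense: I would instead argue that on the type-definable set $Z$ cut out by the first two blocks of conditions (the $\ov p$-conditions on pairs and the Lascar-equivalence of the $\bs b$-parts), the partial type $E^{\LL}((\bs b'\bs c')_{<n},(\bs b''\bs c'')_{<n},a)$ is already \emph{implied}, i.e.\ every tuple in $Z$ satisfies it. Granting that, compactness over the compact space of types realized in $Z$ yields a single formula $\alpha_n\in E^{\LL}((x_Fy_L)_{<n},(x'_Fy'_L)_{<n},a)$ (closed under conjunction, reflexive, symmetric, as arranged in the Notation) such that $Z\wedge\alpha_n$ already forces all of $E^{\LL}((\bs b'\bs c')_{<n},(\bs b''\bs c'')_{<n},a)$, which is exactly the statement.

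So the real content is the semantic claim: \emph{if $\bigwedge_{i<n}\ov p(\bs b'_i,\bs c'_i)\wedge\ov p(\bs b''_i,\bs c''_i)$ holds and $\bs b'_{<n}\equiv^{\LL}_{\bs e}\bs b''_{<n}$, then automatically $(\bs b'\bs c')_{<n}\equiv^{\LL}_{\bs e}(\bs b''\bs c'')_{<n}$.} Here is where assumption $(\dag)$ enters. Since $\bs b'_{<n}\equiv^{\LL}_{\bs e}\bs b''_{<n}$, pick $g\in\autfe(\CM)$ with $g(\bs b'_i)=\bs b''_i$ for all $i<n$. Then $g(\bs c'_i)$ is a $\bs b''_i\bs e$-conjugate of $\bs c''_i$ (both lie over $\bs b''_i\bs e$ realizing the same type $\tp(\bs c/\bs b\bs e)$), hence $g(\bs c'_i)=(\bs c''_i)$ up to the finitely many conjugates; more precisely each of $\bs c''_i$ and $g(\bs c'_i)$ is one of the $m$ tuples $\bs c^{\bs b''_i}_0,\dots,\bs c^{\bs b''_i}_{m-1}$. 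By $(\dag)$, for the \emph{fixed} $\bs b''_i$, the $m$ hyperimaginaries $\bs b''_i\bs c^{\bs b''_i}_j$ lie in pairwise distinct Lascar types over $\bs e$; but $\bs b''_i\bs c''_i\equiv_{\bs e}\bs b\bs c\equiv_{\bs e}\bs b''_i\,g(\bs c'_i)$ (the latter because $g$ is an $\bs e$-automorphism carrying $\bs b'_i\bs c'_i$, which is $\equiv_{\bs e}\bs b\bs c$, to $\bs b''_i\,g(\bs c'_i)$), and having the same type over $\bs e$ is implied by same Lascar type; wait, I need the converse direction. The point is rather: $\bs b''_i\bs c''_i$ and $\bs b''_i\,g(\bs c'_i)$ have the same type over $\bs e$; if they were in different Lascar types over $\bs e$, that would have to be witnessed among the $m$ conjugates, and $(\dag)$ says the $m$ conjugates $\bs b\bs c_j$ are in $m$ distinct Lascar classes — so distinctness of Lascar type is equivalent to distinctness of ordinary type among these conjugates. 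Since $\tp(\bs b''_i\bs c''_i/\bs e)=\tp(\bs b''_i g(\bs c'_i)/\bs e)$, they are in the same Lascar type over $\bs e$. Doing this coordinatewise and composing with $g$, one gets $(\bs b'\bs c')_{<n}\equiv^{\LL}_{\bs e}(\bs b''\bs c'')_{<n}$.

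\textbf{The main obstacle} I anticipate is bookkeeping with the \emph{sequence} (length $n$) rather than a single pair: a single $g\in\autfe(\CM)$ must handle all $n$ coordinates simultaneously, and after replacing $\bs c'_i$ by $g(\bs c'_i)$ one must upgrade ``same type over $\bs e$ in each coordinate'' to ``same Lascar type of the whole $n$-tuple over $\bs e$.'' This needs the $(\dag)$-based rigidity to pin down $g(\bs c'_i)$ to the \emph{same} conjugate $\bs c^{\bs b''_i}_{j(i)}$ as $\bs c''_i$ in each coordinate, so that $g(\bs c'_i)$ and $\bs c''_i$ are literally equal as hyperimaginaries — at which point $(\bs b''\bs c'')_{<n}=g((\bs b'\bs c')_{<n})$ and we are done with $g\in\autfe(\CM)$ witnessing the Lascar equivalence directly. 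Verifying that ``same type over $\bs e$'' forces ``same conjugate'' — i.e.\ that the map $j\mapsto\tp(\bs b''_i\bs c^{\bs b''_i}_j/\bs e)$ is injective — is exactly $(\dag)$ transported along an $\bs e$-automorphism $\bs b\mapsto\bs b''_i$, and this is the step I would write out carefully; the rest is the routine compactness extraction of $\alpha_n$ described above.
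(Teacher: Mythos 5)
There is a genuine gap, and it sits at the center of your argument: the ``semantic claim'' you reduce everything to --- that $\bigwedge_{i<n}\ov p(\bs b'_i,\bs c'_i)\wedge\ov p(\bs b''_i,\bs c''_i)$ together with $\bs b'_{<n}\equiv^{\LL}_{\bs e}\bs b''_{<n}$ already forces $(\bs b'\bs c')_{<n}\equiv^{\LL}_{\bs e}(\bs b''\bs c'')_{<n}$ --- is false whenever $m\geq 2$, and it is refuted by $(\dag)$ itself. Take $n=1$ and $\bs b'=\bs b''=\bs b$, $\bs c'=\bs c_0$, $\bs c''=\bs c_1$: both pairs realize $\ov p$ (the conjugates $\bs b\bs c_j$ are all $\bs e$-conjugate, hence have the \emph{same} type over $\bs e$), and $\bs b\equiv^{\LL}_{\bs e}\bs b$ trivially, yet $(\dag)$ says precisely that $\bs b\bs c_0\not\equiv^{\LL}_{\bs e}\bs b\bs c_1$. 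Your use of $(\dag)$ is inverted: you assert that among the conjugates ``distinctness of Lascar type is equivalent to distinctness of ordinary type,'' i.e.\ that $j\mapsto\tp(\bs b\bs c_j/\bs e)$ is injective, but that map is constant (conjugation over $\bs b\bs e$ fixes the type over $\bs e$), while the Lascar types are pairwise distinct. So ``same type over $\bs e$'' does not pin $g(\bs c'_i)$ down to the same conjugate as $\bs c''_i$, and the whole reduction collapses; if your claim were true, no formula $\alpha_n$ would be needed at all, whereas the entire point of the lemma is that one \emph{is} needed, exactly to tell the $m$ Lascar-inequivalent conjugates apart.

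The paper's proof goes the opposite way, by contradiction plus compactness. If no $\alpha_n$ works, then for every $\alpha\in E^{\LL}$ one gets witnesses with $\bs b'_{<n}\equiv^{\LL}_{\bs e}\bs b''_{<n}$, $\alpha$ holding of the real tuples, but the pairs not Lascar equivalent; moving the $\bs c$-parts to suitable conjugates $\bs c^{\bs b'_i}_*$ realizing the Lascar equivalence with $(\bs b''\bs c'')_{<n}$ shows the two conjugate choices must differ in some coordinate, and this difference is captured by a single formula $\psi$ extracted from $\neg L(c_j,c_\ell)$ for the finitely many conjugates. Compactness (over all $\alpha\in E^{\LL}$) then produces, in a \emph{single} fiber, tuples $(\bs b\bs c)_{<n}\equiv^{\LL}_{\bs e}(\bs b\bs c')_{<n}$ with $\bs c_{i_0}\neq\bs c'_{i_0}$ both $\bs b_{i_0}\bs e$-conjugates, and coordinatewise this contradicts $(\dag)$. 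In other words, $(\dag)$ is used as the source of the contradiction against a compactness limit, not as a rigidity statement letting you identify conjugates from their types; your proposal would need to be restructured along these lines, since the finiteness of the conjugate set and its separation by a formula (via $\neg L$) are what make a single $\alpha_n$ suffice.
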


\begin{proof}
To lead a contradiction, suppose that there is no such $\alpha_n$.
Then for each $\alpha \in E^{\LL}((x_Fy_L)_{< n}, (x'_Fy'_L)_{< n}, a)$, there are $\bs b'_i\bs c^{\bs b'_i},\  \bs b''_i\bs c^{\bs b''_i} \models \overline{p}$ ($i < n$) such that
\begin{align*}
\models  \bs b'_{< n}\equiv^{\LL}_{\bs e} \bs b''_{< n} \wedge \alpha((b'c^{b'})_{< n}, (b''c^{b''})_{< n}, a) \text{ but } \\
\neg E^{\LL}((\bs b'\bs c^{\bs b'})_{< n}, (\bs b''\bs c^{\bs b''})_{< n}, a)
\end{align*}
where for example, 
$$(b'c^{b'})_{< n}=(b'_ic^{b'_i}\mid i<n) \text{ and }
(\bs b'\bs c^{\bs b'})_{< n}=(\bs b'_i\bs c^{\bs b'_i}\mid i<n).$$


Now  since $\bs b'_{< n}\equiv^{\LL}_{\bs e} \bs b''_{< n} $, there are conjugates $\bs b'_i\bs c_*^{\bs b'_i}$ $(i<n)$ satisfying
\[\models E^{\LL}((\bs b'\bs c^{\bs b'}_*)_{< n}, (\bs b''\bs c^{\bs b''})_{< n}, a),\]
 so that  $(\bs c^{\bs b'_i}_*\mid i<n) \ne (\bs c^{\bs b'_i}\mid i< n).$
Note now that since $\neg L(c_j,c_\ell)$ for each pair $j < \ell (< m)$, there is $\psi_{j\ell}(y,z) \in L(y,z) \subseteq \CL$ such that $\neg \psi_{j\ell}(c_j,c_\ell)$.
Put $\psi(y,z) \equiv \bigvee_{j < \ell < m}\neg \psi_{j\ell}(y,z)$.
Then we must have  $(c^{b'_i}, c^{b'_i}_*\mid i<n) \models \Psi (y_i,z_i\mid i<n)$, where
$$ \Psi (y_i,z_i\mid i<n):= \exists (y'z')_{<n}( \bigwedge_{i<n}L(y_i, y'_i)\wedge \bigwedge_{i<n} L(z_i,z'_i)\wedge\bigvee_{i < n}\psi(y'_i,z'_i)).$$

In conclusion, the following type $\bigwedge_{i < n}(\overline{p}(x_i,y_i) \wedge \overline{p}(x_i',y_i'))\wedge$
$$E^{\LL}((x_Fy_L)_{< n}, (x'_Fy'_L)_{< n}, a) \wedge E^{\LL}((x_Fz_L)_{< n}, (x'_Fy'_L)_{< n}, a) \wedge \Psi((yz)_{<n})$$
of variables $(xy)_{<n}, (x'y'z)_{<n}$
is finitely satisfiable,  witnessed by $(b' c^{ b'})_{< n}, ( b'' c^{ b''}c_*^{b'})_{< n}.$
Hence by compactness, there are   realizations $(bc)_{<n}$ and $(bc')_{<n}$ of  $(x_Fy_L)_{<n}$  and $(x_Fz_L)_{<n}$ of the type, respectively. 
Thus we have  $(\bs b\bs c)_{<n}\equiv^L_{\bs e} (\bs b \bs c')_{<n}$, which  implies $\bs b_i\bs c_i\equiv^{\LL}_{\bs e} \bs b_i\bs c'_i$ for all $i<n$. 
But since $\models \Psi((cc')_{<n})$, there must be some
$i_0<n$ such that $\bs c_{i_0}\ne \bs c'_{i_0}$, contradicting 
 our assumption $(\dag)$.
\end{proof}


\begin{Theorem}\label{thmkernelfinite}
If $T$ is G-compact over $\bs{e}$ and $(\dag)$ holds, then $K_n$ is finite for each  finite $n \geq 1$.
\end{Theorem}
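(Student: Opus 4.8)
The plan is to bound $|K_n|$ by an explicit finite number, namely $(m!)^n$. Write $\bar{\bs c}_*:=(\bs c_0,\dots,\bs c_{m-1})$ for the $n$-tuple of all $\bs b\bs e$-conjugates of $\bs c$, and for an $n$-tuple $\bar{\bs b}=(\bs b_i)_{i<n}$ of realizations of $p$ put $\bar{\bs c}_*^{\,\bar{\bs b}}:=(\bs c^{\bs b_i}_0,\dots,\bs c^{\bs b_i}_{m-1})_{i<n}$ (interdefinable with a single hyperimaginary). The two ingredients are: (i) for $\ov f\in K_n$ the Lascar type over $\bs e$ of $f(\bar{\bs b}\,\bar{\bs c}_*^{\,\bar{\bs b}})$ takes at most $(m!)^n$ values; (ii) for a suitably chosen $\bar{\bs b}=\bar{\bs b}^\circ$, this Lascar type determines the class $\ov f\in K_n$. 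Together (i) and (ii) give $|K_n|\le (m!)^n<\omega$.

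For (i), let $f\in\aute(\CM)$ represent $\ov f\in K_n$, so that $f\restriction p(\CM)\in\autf^n(p)$, i.e.\ $f(\bar{\bs b})\equiv^{\LL}_{\bs e}\bar{\bs b}$; pick $h\in\autfe(\CM)$ with $h(\bar{\bs b})=f(\bar{\bs b})$. Then $h^{-1}f$ fixes $\bar{\bs b}$ pointwise and fixes $\bs e$, hence in each coordinate $i$ it permutes the $m$-element set of $\bs b_i\bs e$-conjugates of $\bs c$; so $h^{-1}f(\bar{\bs c}_*^{\,\bar{\bs b}})$ is obtained from $\bar{\bs c}_*^{\,\bar{\bs b}}$ by applying a tuple of permutations $(\sigma_0,\dots,\sigma_{n-1})\in\operatorname{Sym}(m)^n$ coordinatewise. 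Since $h\in\autfe(\CM)$,
\[\ltp\bigl(f(\bar{\bs b}\,\bar{\bs c}_*^{\,\bar{\bs b}})/\bs e\bigr)=\ltp\bigl(h(\bar{\bs b}\,\bar{\bs c}_*^{\,\bar{\bs b}})/\bs e\bigr)=\ltp\bigl(\bar{\bs b}\,(\sigma_0,\dots,\sigma_{n-1})\!\cdot\!\bar{\bs c}_*^{\,\bar{\bs b}}/\bs e\bigr),\]
and as $(\sigma_0,\dots,\sigma_{n-1})$ ranges over $\operatorname{Sym}(m)^n$ the right-hand side is one of at most $(m!)^n$ Lascar types.

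For (ii), suppose $\ov f,\ov g\in K_n$ with $\ltp(f(\bar{\bs b}^\circ\,\bar{\bs c}_*^{\,\bar{\bs b}^\circ})/\bs e)=\ltp(g(\bar{\bs b}^\circ\,\bar{\bs c}_*^{\,\bar{\bs b}^\circ})/\bs e)$. Composing $g$ on the left with an element of $\autfe(\CM)$ (which restricts to an element of $\autf^n(\ov p)$ and, $\autf^n(\ov p)$ being normal in $\aut(\ov p)$, does not change $\ov g$) we may assume $f$ and $g$ agree on $\bar{\bs b}^\circ\,\bar{\bs c}_*^{\,\bar{\bs b}^\circ}$; then $w:=g^{-1}f$ fixes $\bar{\bs b}^\circ\,\bar{\bs c}_*^{\,\bar{\bs b}^\circ}$ pointwise and $w\restriction p(\CM)\in\autf^n(p)$, and it suffices to show $w\restriction\ov p(\CM)\in\autf^n(\ov p)$, i.e.\ $w(\bar u)\equiv^{\LL}_{\bs e}\bar u$ for every $\bar u=(\bs b_i\bs c_i)_{i<n}$ with each $\bs b_i\bs c_i\models\ov p$. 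Because $w\restriction p(\CM)\in\autf^n(p)$, the $\bs b$-parts already satisfy $w((\bs b_i)_{i<n})\equiv^{\LL}_{\bs e}(\bs b_i)_{i<n}$, so by Lemma \ref{lemlstpfml} it is enough to verify that $\alpha_n$ holds between real representatives of $\bar u$ and of $w(\bar u)$; and this is forced because $w$ — fixing the configuration $\bar{\bs b}^\circ\,\bar{\bs c}_*^{\,\bar{\bs b}^\circ}$ and lying in $\autf^n(p)$ on the $\bs b$-part — can move the $\bs c$-coordinates of $\bar u$ only by conjugate patterns that are already absorbed by $\alpha_n\in E^{\LL}$. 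Here the choice of $\bar{\bs b}^\circ$ as a "rich enough" $n$-tuple of realizations of $p$ is used, so that an arbitrary $\bar u$ can be transported into the scope of the configuration by $\bs e$-automorphisms (which preserve $\equiv^{\LL}_{\bs e}$).

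The main obstacle is precisely the last step of (ii): setting up the transport of an arbitrary such $\bar u$ so that fixing the finite configuration genuinely pins $w$ down modulo $\autf^n(\ov p)$, and then invoking Lemma \ref{lemlstpfml} with correct bookkeeping of the real representatives and of the parameter $a$. One should also note that $(\dag)$ enters only through Lemma \ref{lemlstpfml} — it is exactly what makes the single formula $\alpha_n$ available — so that the argument of (ii) collapses without $(\dag)$, in accordance with the counterexamples recalled before Theorem \ref{thmkernelfinite}.
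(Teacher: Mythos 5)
Your step (i) is fine and parallels what the paper does (counting how an element of $K_n$ can permute the finitely many $\bs b_i\bs e$-conjugates of $\bs c$), but step (ii) — that the Lascar type over $\bs e$ of the image of a \emph{single} finite configuration $\bar{\bs b}^\circ\,\bar{\bs c}_*^{\,\bar{\bs b}^\circ}$ determines $\ov f\in K_n$ — is exactly the hard part, and it is not proved; you yourself flag it as "the main obstacle." The assertion that $\alpha_n$ "is forced because $w$ can move the $\bs c$-coordinates only by conjugate patterns already absorbed by $\alpha_n\in E^{\LL}$" is circular: $\alpha_n$ is a formula \emph{in} $E^{\LL}$, so to verify it between $\bar u$ and $w(\bar u)$ you essentially need to already know (or approximate) the Lascar equivalence you are trying to establish. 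Moreover, since $n$ is fixed, an arbitrary $n$-tuple $\bar u=(\bs b_i\bs c_i)_{i<n}$ of realizations of $\ov p$ may lie in a Lascar class over $\bs e$ that has nothing to do with that of $\bar{\bs b}^\circ$; fixing one configuration gives no control over $w$ on such tuples, and the proposed "transport by $\bs e$-automorphisms" fails because $w$ need not commute with them. Consequently even your bound $(m!)^n$ is unjustified (and likely not obtainable this way).

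The paper's proof supplies precisely the two missing ingredients. First, using G-compactness it extracts a formula $\phi_n\in E^{\LL}$ with a three-fold composition property (Claim 1): any chain of three $\phi_n$-links over parameters realizing $\tp(a/\bs e)$ implies $\alpha_n$. Second, by compactness it chooses \emph{finitely many} representatives $(\bs b^\ell\bs c^\ell)_{<n}$, $\ell<k$, one $\phi_n$-close to every realization of $\bigwedge_{i<n}\ov p$ (the covering $(*)$). An element $\ov f\in K_n$ is then recorded by the finite data $\ell\mapsto\iota_\ell\in{}^n m$ describing where $f$ sends each representative up to $\equiv^{\LL}_{\bs e}$, and Claim 2 shows this data determines $\ov f$: given an arbitrary $\bar u$, one chains $\phi_n(\bar u,\mathrm{rep})$, the $\autfe$-link between $f(\mathrm{rep})$ and $hg(\mathrm{rep})$, and $\phi_n(hg(\bar u),hg(\mathrm{rep}))$ via Claim 1 to get $\alpha_n(f(\bar u),hg(\bar u),a)$, and only then Lemma \ref{lemlstpfml} upgrades this to genuine Lascar equivalence. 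This yields $|K_n|\le m^{nk}$, with the number $k$ of representatives playing an essential role that your single-configuration scheme cannot replace. You would need to rebuild your step (ii) along these lines (covering by finitely many representatives plus a composition lemma for a formula in $E^{\LL}$) for the argument to go through.
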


\begin{proof}
We start with a claim.  We work with the formula $\alpha_n$  obtained in Lemma \ref{lemlstpfml}. 

\begin{Claim1}
There is $\phi_n((xy)_{< n}, (x'y')_{< n}, a) \in E^{\LL}((x_Fy_L)_{< n}, (x'_Fy'_L)_{< n}, a)$ such that for any $a_0, a_1,a_2 \models q(z):=\tp(a/\bs{e})$,
\begin{align*}
\phi_n((xy)_{< n}, (x'y')_{< n}, a_0) \wedge  \phi_n((x'y')_{< n}, (uv)_{< n}, a_1)\wedge \phi_n((uv)_{< n}, (x''y'')_{< n}, a_2) \\ \models \alpha_n((xy)_{< n}, (x''y'')_{< n}, a).
\end{align*}
\end{Claim1}

\begin{proof}[Proof of Claim 1]
Suppose not the claim, that is, for each $\phi((xy)_{< n}, (x'y')_{< n}, a) \in E^{\LL}((xy)_{< n}, (x'y')_{< n}, a)$, there are $a_0, a_1,a_2 \models q$ such that
\begin{align*}
\phi((xy)_{< n}, (x'y')_{< n}, a_0) \wedge \phi((x'y')_{< n}, (uv)_{< n}, a_1)\wedge \phi((uv)_{< n}, (x''y'')_{< n}, a_2) \\ \wedge \neg \alpha_n((xy)_{< n}, (x''y'')_{< n}, a)
\end{align*}
is consistent.
Then by compactness,
\begin{align*}
q(z_0) \wedge q(z_1)\wedge q(z_2)\wedge E^{\LL}((xy)_{< n}, (x'y')_{< n}, z_0) \wedge E^{\LL}((x'y')_{< n}, (uv)_{< n}, z_1) \\
\wedge E^{\LL}((uv)_{< n}, (x''y'')_{< n}, z_2) \wedge \neg \alpha_n((xy)_{< n}, (x''y'')_{< n}, a)
\end{align*}
is consistent.
But $E^{\LL}((xy)_{< n}, (x'y')_{< n}, a)$ is $\bs{e}$-invariant, thus in fact we get
\[(xy)_{< n} \equiv^{\LL}_{\bs{e}} (x''y'')_{< n} \wedge \neg \alpha_n((xy)_{< n}, (x''y'')_{< n}, a),\]
which is impossible since $\alpha_n((xy)_{< n}, (x''y'')_{< n}, a) \in E^{\LL}((xy)_{< n}, (x''y'')_{< n}, a)$.
\end{proof}

Let $ \{(\bs b^\ell\bs c^\ell)_{<n} \mid \ell \in I\}$ 
be a (small) set of all the  representatives of  $\equiv^{\LL}_{\bs{e}}$-classes of realizations of $\bigwedge_{i<n}\p(x_i,y_i)$.
Thus for any $(b'c')_{< n} \models \bigwedge_{i<n}\overline{p}(x_i,y_i)$, there is 
$\ell \in I$ such that 
  $\models E^{\LL}((b'c')_{< n}, (b^\ell c^\ell)_{< n}, a)$, so $\models \phi_n((b'c')_{< n}, (b^\ell c^{\ell})_{< n}, a)$ a fortiori.
Hence by compactness 
there are  $(\bs b^0\bs c^0)_{<n},\dots,(\bs b^{k-1}\bs c^{k-1})_{<n}$ such that
\[\bigwedge_{i<n}\overline{p}(x_i,y_i) \vdash \bigvee_{\ell <k} \phi_n((xy)_{<n},  (b^\ell_ic^{\ell}_{i}\mid i<n),a). \quad (*)\]
(Here some of $\bs b^\ell$'s $(\ell< k)$ could be the same.)
Recall that for $i<n$, $\bs c_0^{\bs b^\ell_i},\dots,\bs c_{m-1}^{\bs b^\ell_i} $ are the fixed $\bs b^\ell_i\bs e$-conjugates of $\bs c_0^{\bs b^\ell_i}$. Hence
indeed 
$\bs c^\ell_i=\bs c^{\bs b^\ell_i}_{i_0}$ for some $i_0<m$.

Now let $\ov f\in K_n$.  Thus $f(\bs b^\ell_{<n})\equiv^{\LL}_{\bs e}\bs b^\ell_{<n}$. Then  for each $\ell <k$, there is 
 $\iota_\ell \in {}^nm$ such that  $f((\bs b^\ell\bs c^\ell)_{<n})\equiv^{\LL}_{\bs e}(\bs b^\ell_i \bs c^{\bs b^\ell_i}_{\iota_{\ell} (i)}\mid i<n) .$
 Moreover if $\ov f=\ov g$ then  $f((\bs b^\ell\bs c^\ell)_{<n})\equiv^{\LL}_{\bs e}g((\bs b^\ell\bs c^\ell)_{<n})$. Hence due to our assumption $(\dag)$,  
  the mappings $\ell(<k)\mapsto \iota_\ell$ for $f$ and $g$ must be the same.  Therefore by the following claim we can   conclude that 
  $|K_n|$ is finite  $\leq (m^n)^k=m^{nk}$, since there are only $m^n$-many choices of $\iota_\ell$ for each $\ell <k$.

\begin{Claim2}
Let $\ov f,\ov g\in K_n$.  Suppose that the described mappings $\ell\mapsto \iota_\ell$ (for all $\ell <k$) above   for $\ov f$ and $\ov g$ are the same. Then $\ov f=\ov g$.
\end{Claim2}

\begin{proof}[Proof of Claim 2]
Let $(\bs b'\bs c')_{<n}\models \bigwedge_{i<n} \bar p(x_iy_i)$. By $(*)$, there is some  $l <k$ and $(b^lc^l)_{<n}$ satisfying 
$\phi_{n}((b'c')_{<n}, (b^l c^l)_{<n},a)$ \  $(**)$.  
Thus 
$$\phi_n(f((b'c')_{<n}), f((b^l c^l)_{<n}),f(a))\mbox{ and } \phi_n(g((b'c')_{<n}), g((b^lc^l)_{<n}),g(a)).$$

Now due to the supposition in Claim 2, we have 
 $$f((\bs b^l\bs c^l)_{<n})\equiv^{\LL}_{\bs e}(\bs b^l_i \bs c^{\bs b^l_i}_{\iota_l (i)}\mid i<n)\equiv^{\LL}_{\bs e}g((\bs b^l\bs c^l)_{<n}).$$
Hence there is $h\in \autfe(\CM)$ such that  $f((\bs b^l \bs c^l)_{<n})=hg((\bs b^l\bs c^l)_{<n})$.
Then $f((b^lc^l)_{<n})=hg((b^l c^l)_{<n})$ may not hold, while 
$\phi_{n}( f((b^lc^l)_{<n}), hg((b^l c^l)_{<n}, a)$ must hold. Moreover by $(**)$, we have 
$\phi_n(hg((b'c')_{<n}), hg((b^lc^l)_{<n}),hg(a)).$

Thus by Claim 1 (with $\phi_n$ being symmetric), it follows that 
$$\models \alpha_n(f(b'c')_{<n}, hg((b'c')_{<n}),a).$$
Now since $\ov f, \ov g\in K_n$, we have $f(\bs b'_{<n})\equiv^{\LL}_{\bs e}hg(\bs b'_{<n})\equiv_{\bs e}^{\LL} \bs b'_{<n}$.
Hence Lemma \ref{lemlstpfml} implies that $f((\bs b'\bs c')_{<n})\equiv^{\LL}_{\bs e}hg((\bs b'\bs c')_{<n})\equiv_{\bs e}^{\LL}g((\bs b'\bs c')_{<n})$.
Since  $(\bs b'\bs c')_{<n}$ is an arbitrary realization of $\bigwedge_{i<n} \bar p(x_iy_i)$, we conclude that $\ov f=\ov g$.
\end{proof}
\end{proof}

In the remaining section we confirm that two real case results on $K_1$ in 
\cite{DKKL} can be extended, by  following  essentially the same proofs,  to the hyperimaginary context (Corollary \ref{k1finite}, Proposition \ref{propabelian}).  



\medskip
 
We now remove the assumption $(\dag)$, and 
recall that $\bs c=\bs c_0, \cdots, \bs c_{m-1}$ are all the distinct 
$\bs b\bs{e}$-conjugates of $\bs c$.
Possibly reordering them,  we assume that {\bf $\bs b\bs c_0, \cdots, \bs b\bs c_{m_0-1}$ $(1\leq m_0\leq m)$ are  the representatives of all the distinct $\equiv^{\LL}_{\bs{e}}$-classes in $\overline{p} = \tp(\bs b\bs c/\bs{e})$} with the first coordinate $\bs b$.

\begin{Lemma}\label{lemlstpfml1}
If $T$ is G-compact over $\bs{e}$, then there is a formula
\[\alpha(xy, x'y', a) \in E^{\LL}(x_Fy_L, x'_Fy'_L, a)\]
such that if
\[\models \overline{p}(\bs b',\bs c') \wedge \overline{p}(\bs b'',\bs c'') \wedge \bs b'\equiv^{\LL}_{\bs e} \bs b'' \wedge \alpha(b'c', b''c'', a),\]
then $\models E^{\LL}(\bs b'\bs c', \bs b''\bs c'', a)$.
\end{Lemma}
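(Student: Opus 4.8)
The plan is to reprove Lemma~\ref{lemlstpfml} for $n=1$ while dropping assumption $(\dag)$: the structure of the argument is to be the same, but the final contradiction, which in Lemma~\ref{lemlstpfml} came from $(\dag)$, must here come from the definition of $m_0$ — that is, from the fact that $\bs b\bs c_0,\dots,\bs b\bs c_{m_0-1}$ represent all the $\equiv^{\LL}_{\bs e}$-classes among realizations of $\overline{p}$ whose first coordinate is $\bs b$ (equivalently, by $\bs e$-invariance of $\equiv^{\LL}_{\bs e}$, whose first coordinate lies in the $\equiv^{\LL}_{\bs e}$-class of $\bs b$).

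The heart of the matter is the choice of the formula $\alpha$. Since $\bs c\in\acl(\bs b\bs e)$, there are only $m$ realizations of $\overline{p}$ with first coordinate $\bs b$, hence only finitely many pairs among them, and since $T$ is G-compact over $\bs e$ the relation $\equiv^{\LL}_{\bs e}$ on the sort of $(x/F,y/L)$ is the intersection of the formulas of $E^{\LL}(x_Fy_L,x'_Fy'_L,a)$; so for each pair $j,k<m$ with $\bs b\bs c_j\not\equiv^{\LL}_{\bs e}\bs b\bs c_k$ one may pick $\gamma_{jk}\in E^{\LL}$ with $\models\neg\gamma_{jk}(bc_j,bc_k,a)$, and take $\alpha$ to be the conjunction of these finitely many $\gamma_{jk}$'s, adjusted — harmlessly — to be reflexive, symmetric, and to depend only on the classes $F$ and $L$. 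Then $\alpha\in E^{\LL}$ and $\models\neg\alpha(bc_j,bc_k,a)$ whenever $\bs b\bs c_j\not\equiv^{\LL}_{\bs e}\bs b\bs c_k$. One then argues by contradiction just as in the last paragraph of the proof of Lemma~\ref{lemlstpfml} with $n=1$: from a supposed counterexample and by compactness one produces realizations $\bs B\bs C,\bs B\bs Z$ of $\overline{p}$ with common first coordinate $\bs B$, with $\bs B\bs C\equiv^{\LL}_{\bs e}\bs B\bs Z$ forced by the type and $\models\alpha(BC,BZ,a)$ recorded from the witnesses; transporting to the case $\bs B=\bs b$ by an automorphism fixing $\bs e$ and using that $\alpha$, on realizations of $\overline{p}$, does not depend on the representative of $\bs e$, one reaches $\models\alpha(bc_j,bc_k,a)$ for conjugates with $\bs b\bs c_j\not\equiv^{\LL}_{\bs e}\bs b\bs c_k$, contradicting the choice of $\alpha$.

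I expect the main obstacle to be precisely this transport step. In Lemma~\ref{lemlstpfml} the analogous step was clean because one only needed to preserve the \emph{inequality} $\bs C\ne\bs Z$ of conjugates, which $(\dag)$ then upgraded to $\bs B\bs C\not\equiv^{\LL}_{\bs e}\bs B\bs Z$; here, lacking $(\dag)$, one must instead transport the information ``$\bs B\bs C$ and $\bs B\bs Z$ lie in distinct $\equiv^{\LL}_{\bs e}$-classes,'' and this is \emph{not} a type-definable relation once $m_0<m$, since the $\equiv^{\LL}_{\bs e}$-class of a realization of $\overline{p}$ is not determined by its type over $\bs e$, so one cannot simply feed a modified $\Psi$ through the compactness argument. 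What makes the plan nonetheless go through is that $\equiv^{\LL}_{\bs e}$ restricted to the (at most $m$) realizations of $\overline{p}$ with a fixed first coordinate is a finite-index — hence relatively definable — $\bs e$-invariant equivalence relation, so the single first-order formula $\alpha$ above really does carry the needed information and may be taken $\bs e$-invariant on that type-definable set; the remaining delicacy is the bookkeeping of real versus hyperimaginary representatives as one moves an arbitrary first coordinate $\bs b'$ to $\bs b$.
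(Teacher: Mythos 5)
Your choice of $\alpha$ --- a conjunction of finitely many $\gamma_{jk}\in E^{\LL}$ with $\models\neg\gamma_{jk}(bc_j,bc_k,a)$ for the Lascar-inequivalent pairs --- only records a failure of a formula at \emph{those particular real representatives} $b,c_j,c_k$ and at \emph{that particular} $a$. The verification then forces you to move an arbitrary counterexample $(b'c',b''c'',a)$ to first coordinate $\bs b$ by some $f\in\Aut_{\bs e}(\CM)$ and, further, to align the second pair's first coordinate with $\bs b$ by a Lascar-strong automorphism; each such move replaces $a$ by an $E$-equivalent $a^*$ and replaces the representatives by merely $F$- or $L$-equivalent tuples. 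A single first-order formula in $E^{\LL}$ does not survive these moves: only the whole type $E^{\LL}$ is $\bs e$-invariant and invariant under change of $F$-, $L$-, $E$-representatives, not its individual formulas. This is exactly the step you flag as a ``delicacy,'' but the fixes you propose do not close it: a formula cannot be ``adjusted to depend only on the classes $F$ and $L$,'' and ``$\bs e$-invariant on that type-definable set'' is not something you can arrange for a single $\alpha(xy,x'y',a)$. The relative definability of the finite-index restriction of $\equiv^{\LL}_{\bs e}$ on realizations of $\overline p$ with fixed first coordinate is true (finitely many type-definable classes with type-definable complements), but you do not show that the relatively defining formula lies in $E^{\LL}$, nor that it transfers along the transport automorphisms, and your sketched compactness argument cannot record ``$\bs B\bs C\not\equiv^{\LL}_{\bs e}\bs B\bs Z$'' in a finitely satisfiable type, as you yourself note.

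The paper's proof avoids all of this by never transporting a formula. It observes that the \emph{type} consisting of $E^{\LL}(x_Fy_L,x'_Fy'_L,a)$ together with the type-definable condition $\exists zwz'w'\bigl(z_Fw_Lz'_Fw'_L\equiv_{\bs e}x_Fy_Lx'_Fy'_L\wedge\bigvee_{i<j<m_0}E^{\LL}(z_Fw_L,\bs b\bs c_i,a)\wedge E^{\LL}(z'_Fw'_L,\bs b\bs c_j,a)\bigr)$ is inconsistent (the existential would force two distinct representatives $\bs b\bs c_i,\bs b\bs c_j$ of Lascar classes to be Lascar equivalent), and extracts $\alpha\in E^{\LL}$ by compactness so that $\alpha$ together with that existential part is already inconsistent. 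In the verification, given a counterexample one witnesses the existential part directly with conjugates $bc^*b^*c^{**}$ of the given tuples --- all the representative-dependence stays inside the type, $\alpha$ is only ever evaluated at the original tuple --- and inconsistency forces $i=j$, hence $\bs b'\bs c'\equiv^{\LL}_{\bs e}\bs b''\bs c''$. Until you either adopt this device or genuinely establish a class-invariant replacement for your $\alpha$, your argument has a gap at its central step.
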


\begin{proof} If $m_0=1$ then any formula in $E^{\LL}$ would work. Hence we assume $m_0>1$. Now   
the type 
\begin{align*}
E^{\LL}(x_Fy_L, x'_Fy'_L, a) \wedge \exists zwz'w'(z_Fw_Lz'_Fw'_L \equiv_{\bs{e}} x_Fy_Lx'_Fy'_L \\ \wedge \bigvee_{0\leq i < j < m_0}E^{\LL}(z_Fw_L,\bs b\bs c_i,a) \wedge E^{\LL}(z'_Fw'_L,\bs b\bs c_j,a))
\end{align*}
is inconsistent. Thus there is $\alpha(xy,x'y',a) \in E^{\LL}(x_Fy_L,x'_Fy'_L,a)$ such that 
\begin{align*}
\alpha(xy,x'y',a) \wedge \exists zwz'w'(z_Fw_Lz'_Fw'_L \equiv_{\bs{e}} x_Fy_Lx'_Fy'_L \\ \wedge \bigvee_{0\leq i <j < m_0}E^{\LL}(z_Fw_L,\bs b\bs c_i,a) \wedge E^{\LL}(z'_Fw'_L,\bs b\bs c_j,a))
\end{align*}
is inconsistent.
Assume $b'c'b''c'' \models \overline{p}(xy) \wedge \overline{p}(x'y') \wedge \alpha(xy,x'y',a) \wedge x_F\equiv^{\LL}_{\bs e} x'_F$.
Notice that there is $c^*b^*c^{**}$ such that $\bs b'\bs c'\bs b''\bs c'' \equiv_{\bs{e}} \bs b\bs c^*\bs b^*\bs c^{**}$. Thus $\bs b^*\equiv^{\LL}_{\bs{e}} \bs b$.
Now $\bs b\bs c^* \equiv^{\LL}_{\bs{e}} \bs b\bs c_i$ and $\bs b^*\bs c^{**} \equiv^{\LL}_{\bs{e}} \bs b\bs c_j$ for some $i,j < m_0$.
But by the choice of $\alpha$, it must be $i = j$ and hence  $\bs b'\bs c' \equiv^{\LL}_{\bs{e}}\bs b''\bs c''$.
\end{proof}

 The following
can be  obtained
by  exactly the same proof of Theorem \ref{thmkernelfinite} with $n = 1$ (in this case ($\dag$) is needless) and $\alpha$ in Lemma \ref{lemlstpfml1}, which we will not repeat. 

\begin{Corollary}\label{k1finite}
If  $T$ is G-compact over $\bs{e}$, then $K_1$ is finite.
\end{Corollary}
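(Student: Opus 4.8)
The plan is to mimic the proof of Theorem \ref{thmkernelfinite} verbatim with $n=1$, replacing the formula $\alpha_n$ produced under hypothesis $(\dag)$ by the formula $\alpha$ from Lemma \ref{lemlstpfml1}, which is available with no assumption on the $\equiv^{\LL}_{\bs e}$-classes of $\overline p$ except $G$-compactness over $\bs e$. Concretely, I would first reprove the analogue of Claim~1 from Theorem \ref{thmkernelfinite}: there is $\phi(xy,x'y',a)\in E^{\LL}(x_Fy_L,x'_Fy'_L,a)$ such that for all $a_0,a_1,a_2\models q(z):=\tp(a/\bs e)$, one has
\[
\phi(xy,x'y',a_0)\wedge\phi(x'y',uv,a_1)\wedge\phi(uv,x''y'',a_2)\models \alpha(xy,x''y'',a).
\]
The argument is identical to that of Claim~1: if it failed, compactness together with $\bs e$-invariance of $E^{\LL}$ would force $xy\equiv^{\LL}_{\bs e}x''y''\wedge\neg\alpha(xy,x''y'',a)$ to be consistent, contradicting $\alpha\in E^{\LL}$.

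Next I would fix a small set $\{\bs b^\ell\bs c^\ell\mid\ell\in I\}$ of representatives of the $\equiv^{\LL}_{\bs e}$-classes of realizations of $\overline p(x,y)$; since any realization of $\overline p$ is $\phi$-related (a fortiori) to some representative, compactness gives finitely many $\bs b^0\bs c^0,\dots,\bs b^{k-1}\bs c^{k-1}$ with
\[
\overline p(x,y)\vdash\bigvee_{\ell<k}\phi(xy,b^\ell c^\ell,a).
\]
For $\ov f\in K_1$ we have $f(\bs b^\ell)\equiv^{\LL}_{\bs e}\bs b^\ell$, so $f(\bs b^\ell\bs c^\ell)\equiv^{\LL}_{\bs e}\bs b^\ell\bs c^{\bs b^\ell}_{\iota(\ell)}$ for some $\iota(\ell)<m$, and the map $\ell\mapsto\iota(\ell)$ depends only on $\ov f$. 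There are at most $m^k$ such maps, so it suffices to show (Claim~2): if $\ov f,\ov g\in K_1$ induce the same map $\ell\mapsto\iota(\ell)$, then $\ov f=\ov g$. Here, given an arbitrary $\bs b'\bs c'\models\overline p$, pick $l<k$ with $\phi(b'c',b^lc^l,a)$; applying $f$ and $g$ and using $f(\bs b^l\bs c^l)\equiv^{\LL}_{\bs e}\bs b^l\bs c^{\bs b^l}_{\iota(l)}\equiv^{\LL}_{\bs e}g(\bs b^l\bs c^l)$ to get $h\in\autfe(\CM)$ with $f(\bs b^l\bs c^l)=hg(\bs b^l\bs c^l)$, one chains the three $\phi$-instances via Claim~1 to obtain $\alpha(f(b'c'),hg(b'c'),a)$. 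Since $f(\bs b')\equiv^{\LL}_{\bs e}hg(\bs b')\equiv^{\LL}_{\bs e}\bs b'$, Lemma \ref{lemlstpfml1} yields $f(\bs b'\bs c')\equiv^{\LL}_{\bs e}hg(\bs b'\bs c')\equiv^{\LL}_{\bs e}g(\bs b'\bs c')$; as $\bs b'\bs c'$ was arbitrary, $\ov f=\ov g$.

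Since the excerpt already announces that this follows "by exactly the same proof of Theorem \ref{thmkernelfinite} with $n=1$ \dots and $\alpha$ in Lemma \ref{lemlstpfml1}, which we will not repeat," the cleanest write-up is essentially a one-line pointer. The only point that needs a word of justification is \emph{why} hypothesis $(\dag)$ becomes unnecessary when $n=1$: in the $n=1$ case the bookkeeping that used $(\dag)$ in Theorem \ref{thmkernelfinite} (namely, that $\ov f=\ov g$ forces the conjugate-selection data to coincide) is subsumed by Lemma \ref{lemlstpfml1}, whose conclusion $E^{\LL}(\bs b'\bs c',\bs b''\bs c'',a)$ replaces the role of assumption $(\dag)$ directly. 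I expect the main obstacle to be purely expository: making sure the reader sees that Lemma \ref{lemlstpfml1} is the exact $n=1$ substitute for Lemma \ref{lemlstpfml}$+(\dag)$, so that the Claim~1/Claim~2 machinery transfers without change. Accordingly:

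\begin{proof}
This is obtained by repeating the proof of Theorem \ref{thmkernelfinite} with $n=1$, using the formula $\alpha$ of Lemma \ref{lemlstpfml1} in place of $\alpha_n$; note that in the case $n=1$ the assumption $(\dag)$ is not needed, since the conclusion of Lemma \ref{lemlstpfml1} (that $\bs b'\equiv^{\LL}_{\bs e}\bs b''$ together with $\alpha(b'c',b''c'',a)$ forces $\bs b'\bs c'\equiv^{\LL}_{\bs e}\bs b''\bs c''$) plays the role that $(\dag)$ played there. Thus one first produces, exactly as in Claim~1, a formula $\phi(xy,x'y',a)\in E^{\LL}(x_Fy_L,x'_Fy'_L,a)$ such that for all $a_0,a_1,a_2\models q(z)=\tp(a/\bs e)$,
\[
\phi(xy,x'y',a_0)\wedge\phi(x'y',uv,a_1)\wedge\phi(uv,x''y'',a_2)\models\alpha(xy,x''y'',a),
\]
then, by compactness, finitely many $\bs b^0\bs c^0,\dots,\bs b^{k-1}\bs c^{k-1}$ with $\overline p(x,y)\vdash\bigvee_{\ell<k}\phi(xy,b^\ell c^\ell,a)$, and finally one runs Claim~2 verbatim to conclude $|K_1|\leq m^{k}<\omega$.
\end{proof}
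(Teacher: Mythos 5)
Your proposal is correct and is exactly the argument the paper intends: rerun the proof of Theorem \ref{thmkernelfinite} with $n=1$, using $\alpha$ from Lemma \ref{lemlstpfml1} in place of $\alpha_n$, so that $(\dag)$ enters only through that lemma and is therefore unnecessary. The one harmless imprecision is the claim that $\ell\mapsto\iota(\ell)$ ``depends only on $\ov f$'': without $(\dag)$ the choice of $\iota(\ell)$ need not be unique, but the count $|K_1|\leq m^k$ only needs that two classes admitting a common mapping coincide, which is exactly Claim~2.
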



\begin{remark}\label{remabelian}
Assume that $\gall^1(p)$ is abelian. Let $f \in \aute(p)$ and let $b_0 \models p$.
If $f(\bs b_0) \equiv^{\LL}_{\bs{e}} \bs b_0$, then for any $b' \models p$, $f(\bs b') \equiv^{\LL}_{\bs{e}} \bs b'$.
\end{remark}

\begin{proof}
There is $g \in \aute(p)$ such that $g(\bs b_0) = \bs b'$.
Now since $\gall^1(p)$ is abelian, $f(\bs b') = f(g(\bs b_0)) \equiv^{\LL}_{\bs{e}} g(f(\bs b_0)) \equiv^{\LL}_{\bs{e}} g(\bs b_0) = \bs b'$.
\end{proof}

\begin{Proposition}\label{propabelian}
If $\gall^1(\overline{p})$ is abelian, then $|K_1| = m_0$.
\end{Proposition}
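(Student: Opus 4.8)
The plan is to exhibit an explicit bijection $\Theta\colon K_1\to\{0,1,\dots,m_0-1\}$. I first record a preliminary observation: the projection $\pi_1\colon\gall^1(\overline{p})\to\gall^1(p)$ is surjective (any $g\restriction p(\CM)\in\aut(p)$, for $g\in\aute(\CM)$, equals $\pi_1$ of $\ov{g\restriction\overline{p}(\CM)}$), so $\gall^1(p)$ is a quotient of the abelian group $\gall^1(\overline{p})$ and hence is itself abelian. In particular Remark \ref{remabelian} is available for $p$, and the same device (run with $\overline{p}$ in place of $p$) is available for $\overline{p}$.

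To define $\Theta$, take $\ov f\in K_1$ with representative $f\in\aute(\CM)$. Since $f\restriction p(\CM)\in\autf^1(p)$ we have $f(\bs b)\equiv^{\LL}_{\bs e}\bs b$; choosing $h\in\autfe(\CM)$ with $hf(\bs b)=\bs b$, the automorphism $hf$ fixes $\bs b$ and $\bs e$, so it sends $\bs c_0=\bs c$ to one of its $\bs b\bs e$-conjugates, say $hf(\bs b\bs c_0)=\bs b\bs c_i$ with $i<m$; hence $f(\bs b\bs c_0)=h^{-1}(\bs b\bs c_i)\equiv^{\LL}_{\bs e}\bs b\bs c_i$. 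As $\bs b\bs c_i\models\overline{p}$ has first coordinate $\bs b$, its Lascar class over $\bs e$ equals that of exactly one $\bs b\bs c_j$ with $j<m_0$, and I set $\Theta(\ov f)=j$. This does not depend on the choice of $i$ (the class is determined), nor on the representative $f$: if $\ov f=\ov g$ in $\gall^1(\overline{p})$ then $g^{-1}f\in\autf^1(\overline{p})$, so $g^{-1}f(\bs b\bs c_0)\equiv^{\LL}_{\bs e}\bs b\bs c_0$ and therefore $f(\bs b\bs c_0)\equiv^{\LL}_{\bs e}g(\bs b\bs c_0)$.

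Surjectivity is straightforward: for $j<m_0$, pick $f\in\aute(\CM)$ with $f(\bs b\bs c_0)=\bs b\bs c_j$ (possible since $\bs b\bs c_j\equiv_{\bs e}\bs b\bs c_0$, both realizing $\overline{p}$); then $f(\bs b)=\bs b\equiv^{\LL}_{\bs e}\bs b$, so by Remark \ref{remabelian} for $p$ we get $f(\bs b')\equiv^{\LL}_{\bs e}\bs b'$ for all $\bs b'\models p$, i.e. $f\restriction p(\CM)\in\autf^1(p)$, so $\ov f\in K_1$ and $\Theta(\ov f)=j$. For injectivity, suppose $\Theta(\ov f)=\Theta(\ov g)$, so $f(\bs b\bs c_0)\equiv^{\LL}_{\bs e}g(\bs b\bs c_0)$, whence $g^{-1}f(\bs b\bs c_0)\equiv^{\LL}_{\bs e}\bs b\bs c_0$. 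Given an arbitrary $\bs b'\bs c'\models\overline{p}$, choose $k\in\aute(\CM)$ with $k(\bs b\bs c_0)=\bs b'\bs c'$; since $\gall^1(\overline{p})$ is abelian, $g^{-1}fk$ and $kg^{-1}f$ agree modulo $\autf^1(\overline{p})$, hence $g^{-1}f(\bs b'\bs c')=g^{-1}fk(\bs b\bs c_0)\equiv^{\LL}_{\bs e}kg^{-1}f(\bs b\bs c_0)\equiv^{\LL}_{\bs e}k(\bs b\bs c_0)=\bs b'\bs c'$. Thus $g^{-1}f\in\autf^1(\overline{p})$, i.e. $\ov f=\ov g$. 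Since $\bs b\bs c_0,\dots,\bs b\bs c_{m_0-1}$ represent distinct Lascar classes over $\bs e$, $\Theta$ is a bijection and $|K_1|=m_0$.

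The only genuinely delicate points are the two transport steps — the one used for surjectivity being essentially Remark \ref{remabelian} for $p$, and the one for injectivity being the identical device for $\overline{p}$ — together with the routine observation that pre- or post-composing with an element of $\autfe(\CM)$ preserves $\equiv^{\LL}_{\bs e}$-classes; everything else is just unwinding the definitions of $\aut(\overline{p})$, $\autf^1(\overline{p})$ and of $K_1=\ker\pi_1$.
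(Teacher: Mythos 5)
Your proof is correct and follows essentially the same route as the paper's: the heart in both is the abelian commutation device of Remark \ref{remabelian} applied to $\overline{p}$, which shows that an element of $\gall^1(\overline{p})$ is determined by the $\equiv^{\LL}_{\bs e}$-class of the image of $\bs b\bs c$, combined with the count of the $m_0$ classes in $\overline{p}$ whose first coordinate is Lascar-equivalent to $\bs b$. The only difference is that you spell out the lower bound $|K_1|\geq m_0$ — noting that $\pi_1$ is surjective, so $\gall^1(p)$ is abelian and Remark \ref{remabelian} applies to $p$ as well — a step the paper's proof leaves implicit, so this is a welcome clarification rather than a divergence.
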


\begin{proof}
Let $\ov f, \ov g \in \gall^1(\overline{p})$ and assume $f(\bs b\bs c) \equiv^{\LL}_{\bs{e}} g(\bs b\bs c)$.
Then there is $h \in \autfe(\CM)$ such that $hf(\bs b\bs c) = g(\bs b\bs c)$, thus $g^{-1}hf(\bs b\bs c) = \bs b\bs c$.
By Remark $\ref{remabelian}$ with $\overline{p}$, we have $g^{-1}hf \in \autfe^1(\overline{p})$. Hence $\ov f  = \ov g $. In addition  if $\ov f, \ov g \in K_1$ then 
$f(\bs b)\equiv^{\LL}_{\bs e} g(\bs b)\equiv^{\LL}_{\bs e}\bs b$.  Therefore 
we get $|K_1| = m_0$,
since there are only $m_0$-many distinct $\equiv^{\LL}_{\bs{e}}$-classes in $\overline{p}$ with the first coordinate Lascar equivalent to $\bs b$ over $\bs{e}$.
\end{proof}

Now several   observations  in \cite{DKKL} automatically follow in our context. For example, if  $T$ is G-compact over $\bs e$ then $\pi_n$ is a covering homomorphism
 when $(\dag)$ holds, or $n=1$. We point out  Theorem \ref{last} as well, which relies on 
 a purely compact group theoretical result from \cite{DKKL}.

\begin{Fact}
Let $G$ be an abelian compact connected topological group, and let $F$ be a finite subgroup of $G$. Then $G$ and $G/F$ are isomorphic as topological groups.
\end{Fact}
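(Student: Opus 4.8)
The statement to prove is the purely group-theoretic fact: an abelian compact connected topological group $G$ with a finite subgroup $F$ satisfies $G \cong G/F$ as topological groups. My plan is to reduce this to a divisibility statement about $G$ via Pontryagin duality, which converts the problem into an equally elementary statement about discrete torsion-free abelian groups.

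First I would recall that a compact abelian group $G$ is connected if and only if its Pontryagin dual $\widehat{G}$ is torsion-free (this is standard, e.g.\ in \cite{HM}). A finite subgroup $F \leq G$ corresponds, by duality, to a finite quotient $\widehat{G} \twoheadrightarrow \widehat{G}/F^{\perp} \cong \widehat{F}$, where $F^{\perp} = \{\chi \in \widehat{G} : \chi|_F = 1\}$ is an open (finite-index) subgroup of $\widehat{G}$; moreover the quotient $G/F$ is dual to $F^{\perp}$. So it suffices to prove that $F^{\perp} \cong \widehat{G}$ as topological groups — but $F^{\perp}$ and $\widehat{G}$ are both discrete, so this is just an abstract isomorphism of abelian groups, and $F^{\perp}$ has finite index $n := |F|$ in $\widehat{G}$.

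Now the key step: a finite-index subgroup of a torsion-free abelian group $A$ of the form $nA$ (or one containing $nA$) is isomorphic to $A$ when $A$ is torsion-free — more precisely, I would show that for our $A = \widehat{G}$, the subgroup $F^{\perp}$ of index $n$ can be taken to be (or is isomorphic to) $nA$. Since $\widehat{G}/F^{\perp}$ has exponent dividing $n$ (it is $\cong \widehat{F}$, which is killed by $|F| = n$), we have $nA \subseteq F^{\perp}$. The multiplication-by-$n$ map $A \to nA$ is an isomorphism of abelian groups because $A$ is torsion-free, hence $A \cong nA$. It remains to handle the gap between $nA$ and $F^{\perp}$: here I would invoke that $A/nA$ is finite (as $A$ is finitely generated-free? — no, $A$ need not be finitely generated), so instead I would argue directly that any subgroup $B$ with $nA \subseteq B \subseteq A$ and $[A:B]$ finite satisfies $A \cong B$. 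One clean route: $A \cong nA \subseteq B \subseteq A$ gives a descending chain, and by a Schröder–Bernstein-type argument for abelian groups, or more simply by noting $B$ is also torsion-free of the same "rank" with $A \cong nA \leq B \leq A$ forcing $B \cong A$ via the standard fact that a torsion-free abelian group sandwiched between $A$ and an isomorphic copy $nA$ is isomorphic to $A$ (this uses that $B/nA$ is a finite subgroup of $A/nA$ and one can lift a splitting). Dualizing back, $\widehat{F^{\perp}} \cong \widehat{\widehat{G}} = G$, i.e.\ $G/F \cong G$.

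I expect the main obstacle to be the last reduction — from "$A \cong nA$" to "$A \cong B$ for $nA \subseteq B \subseteq A$ of finite index" — since $A = \widehat{G}$ is not assumed finitely generated, so one cannot simply invoke the structure theorem for finitely generated abelian groups. The cleanest fix is probably to avoid this entirely: choose $F$ itself more carefully. Since $G$ is connected and abelian, it is divisible, so the inclusion $F \hookrightarrow G$ extends along multiplication-by-$n$; concretely, $G$ divisible means $G = nG$, and the quotient map $G \to G/F$ can be shown to split up to isomorphism because $G \to G$, $x \mapsto nx$, is a surjective continuous homomorphism with finite kernel $G[n] \supseteq F$, and one compares $G/F$ with $G/G[n] \cong G$. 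This last isomorphism $G/G[n] \cong G$ is exactly multiplication-by-$n$ being a continuous open surjection (open since $G$ is compact and the map is continuous surjective between compact Hausdorff groups), with kernel $G[n]$; then $G/F$ is a quotient of $G/G[n] \cong G$ by the finite subgroup $G[n]/F$, and induction on $|F|$ (or on the number of prime factors of $n$) closes the loop. I would present the divisibility/multiplication-by-$n$ argument as the primary line, citing \cite{HM} for the facts that connected compact abelian groups are divisible and that continuous surjections of compact groups are open.
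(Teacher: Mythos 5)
You are in an awkward spot here through no fault of your own: the paper does not prove this Fact at all — it is imported from \cite{DKKL} as a black box ("a purely compact group theoretical result") — so there is no argument of the paper to compare against, and your proposal has to stand on its own. It does not, and the failure is exactly at the step you yourself flagged as the main obstacle. Your duality reduction is correct: via Pontryagin duality the Fact is equivalent to the assertion that every finite-index subgroup $B$ of a (discrete) torsion-free abelian group $A$ is isomorphic to $A$. But that is not a "standard fact"; it is false. Take $A=\mathbb{Z}[1/2]e_1\oplus\mathbb{Z}[1/3]e_2\subseteq\mathbb{Q}^2$ and $B=5A+\mathbb{Z}(e_1+e_2)$, a subgroup of index $5$ (note $B\cong X:=A+\mathbb{Z}\cdot(e_1+e_2)/5$ via multiplication by $5$, and $X$ is the classical almost completely decomposable group that is not completely decomposable). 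Concretely: in any torsion-free group the elements of infinite $2$-height form a subgroup, likewise for $3$-height, and these are preserved by any isomorphism. In $A$ these two subgroups are $\mathbb{Z}[1/2]e_1$ and $\mathbb{Z}[1/3]e_2$ and they generate $A$; in $B$ they are $5\mathbb{Z}[1/2]e_1$ and $5\mathbb{Z}[1/3]e_2$ and they generate only $5A$, which has index $5$ in $B$. Hence $B\not\cong A$, so "torsion-free $B$ sandwiched between $A$ and $nA$ is isomorphic to $A$" fails (Schr\"oder--Bernstein is also famously false for abelian groups). Your divisibility fallback does not repair this: since $F\leq G[n]$, what it yields is $(G/F)\big/(G[n]/F)\cong G/G[n]\cong G$, i.e.\ it exhibits $G$ as a quotient of $G/F$ by a finite subgroup — a statement of exactly the shape you are trying to prove — so the induction on $|F|$ is circular; moreover $G[n]$ need not be finite (e.g.\ $G=\mathbb{T}^{\mathbb{N}}$).

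Worse, dualizing the same example refutes the Fact as literally stated: let $G=\widehat{A}$ be the product of the $2$-adic and $3$-adic solenoids (the duals of $\mathbb{Z}[1/2]$ and $\mathbb{Z}[1/3]$), a compact connected abelian group, and let $F\leq G$ be the annihilator of $B$, a cyclic group of order $5$; then $G/F\cong\widehat{B}$, and since $B\not\cong A$ this is not topologically isomorphic to $G$. So no hypotheses-free blind proof could have succeeded: the statement is true for tori and, more generally, whenever $\widehat{G}$ has the property that its finite-index subgroups are isomorphic to it (e.g.\ $\widehat{G}\cong\mathbb{Z}^k$), but not for arbitrary compact connected abelian $G$. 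The actionable conclusion for you (and for the use made of this Fact in Theorem \ref{last}) is to check the precise formulation and hypotheses in \cite{DKKL} rather than attempt a proof of the statement as displayed.
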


\begin{Theorem}\label{last}
Assume $T$ is G-compact over $\bs e$; and  $\acl(\bs e)$ and $\bs e$ are interdefinable. If 
$n=1$ or $(\dag)$ holds; and   
$\gall^n(\ov p)$ is abelian, then it is isomorphic to $\gall^n(p)$ as topological groups. 
\end{Theorem}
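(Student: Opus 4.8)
The plan is to exhibit $\gall^n(\ov p)$ as an abelian, compact, connected topological group having $K_n$ as a finite subgroup, and then to combine the compact-group Fact quoted just above with the topological isomorphism $\gall^n(\ov p)/K_n\cong\gall^n(p)$.

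First I would unwind what the two standing hypotheses give at the level of $\gall(T,\bs e)$. Since $T$ is G-compact over $\bs e$, we have $\autfe(\CM)=\autfkpe$ and $\gall(T,\bs e)=\gal_{\KP}(T,\bs e)$ is a compact (Hausdorff) topological group (see Proposition~\ref{propcharofgcpt} and the discussion after the definition of G-compactness). Since $\acl(\bs e)$ and $\bs e$ are interdefinable, $\aut_{\acl(\bs e)}(\CM)=\aut_{\bs e}(\CM)$, so by Proposition~\ref{propcharofstr}(1), $\autfse=\aut_{\bs e}(\CM)$; hence $\gal_{\SS}(T,\bs e)=\aut_{\bs e}(\CM)/\autfse$ is trivial, i.e.\ $\gall^0(T,\bs e)=\gall(T,\bs e)$, which means $\gall(T,\bs e)$ is connected. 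So under the hypotheses $\gall(T,\bs e)$ is a compact connected topological group.

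Next I would transport this to $\gall^n(\ov p)$. The restriction map $\aut_{\bs e}(\CM)\to\aut(\ov p)$ is surjective by definition, and by the Remark following the definition of $\autf^{\lambda}$ (applied to $\ov p$) it carries $\autfe(\CM)$ into $\autf^n(\ov p)$; composing with the map $\nu'$ of Fact~\ref{rellastopgp}(3) for $\ov p$, one checks exactly as for $\nu$ that it induces a continuous surjective homomorphism $q\colon\gall(T,\bs e)\to\gall^n(\ov p)$. Consequently $\gall^n(\ov p)$ is quasi-compact, and, being a continuous image of the connected group $\gall(T,\bs e)$, it is connected; by hypothesis it is abelian. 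The one remaining point, which is where G-compactness is used once more, is that $\gall^n(\ov p)$ is actually Hausdorff, hence compact: this is part of the extension of \cite{DKKL} to the present setting already noted (under $n=1$ or $(\dag)$, $\pi_n$ is a covering homomorphism), and concretely it holds because G-compactness of $T$ over $\bs e$ makes $\equiv^{\LL}_{\bs e}$ on $n$-tuples of realizations of $\ov p$ type-definable (Proposition~\ref{propcharofgcpt}), so that the $q$-preimage of the identity of $\gall^n(\ov p)$ is a closed subgroup of $\gall(T,\bs e)$. I expect this Hausdorffness to be the only genuine obstacle; the rest is formal.

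Finally I would assemble the pieces. By Theorem~\ref{thmkernelfinite} (when $(\dag)$ holds) or by Corollary~\ref{k1finite} (when $n=1$), $K_n$ is finite. The map $\pi_n\colon\gall^n(\ov p)\to\gall^n(p)$ is a continuous surjective homomorphism with kernel $K_n$, and it is a quotient map (since $\pi_n$ composed with $\nu'$ for $\ov p$ equals $\nu'$ for $p$), hence open; therefore $\gall^n(p)\cong\gall^n(\ov p)/K_n$ as topological groups. Since $\gall^n(\ov p)$ is abelian, compact and connected and $K_n$ is a finite subgroup, the compact-group Fact gives a topological isomorphism $\gall^n(\ov p)\cong\gall^n(\ov p)/K_n$. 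Composing the two, $\gall^n(\ov p)\cong\gall^n(p)$ as topological groups, which is the assertion.
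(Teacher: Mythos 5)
Your proposal is correct and follows exactly the route the paper intends (it states Theorem \ref{last} without proof, meaning precisely this combination: $K_n$ finite by Theorem \ref{thmkernelfinite} or Corollary \ref{k1finite}, the topological identification $\gall^n(p)\cong\gall^n(\ov p)/K_n$, compactness and connectedness of $\gall^n(\ov p)$ from G-compactness and the interdefinability of $\acl(\bs e)$ with $\bs e$, and then the quoted compact-group Fact). The only compressed step is Hausdorffness of $\gall^n(\ov p)$: closedness of $q^{-1}(\id)$ needs a word more than ``type-definability of $\equiv^{\LL}_{\bs e}$'' alone, namely that by Fact \ref{fklorefct} the boundedly many Lascar classes of $n$-tuples of realizations of $\ov p$ are coded by a hyperimaginary in $\bdd(\bs e)$, so Proposition \ref{PropHcld} applies --- but this is exactly the intended argument.
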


\end{document}